\definecolor{Mahler}{RGB}{78,124,161}
\definecolor{Prokofiev}{RGB}{139,0,18}
\definecolor{Shostakovich}{RGB}{20,74,116}
\theoremstyle{plain}
\newtheorem{proposition}{Proposition}
\newtheorem{lemma}[proposition]{Lemma}
\newtheorem{theorem}[proposition]{Theorem}
\newtheorem{corollary}[proposition]{Corollary}
\theoremstyle{definition}
\newtheorem{definition}[proposition]{Definition}
\newtheorem{definition-theorem}[proposition]{Definition--Theorem}
\newtheorem{definition-proposition}[proposition]{Definition--Proposition}
\newtheorem{remark}[proposition]{Remark}
\newtheorem{conjecture}[proposition]{Conjecture}
\newtheorem{thm}[proposition]{Theorem}
\theoremstyle{definition}
\theoremstyle{plain}
\numberwithin{equation}{section}
\numberwithin{proposition}{section}
\numberwithin{conj}{section}
\numberwithin{figure}{section}
\numberwithin{table}{section}
\renewcommand{\P}{\mathbb{P}}
\newcommand{\eqd}{\overset{d}{=}}
\newcommand{\E}{\mathbb{E}}
\newcommand{\lt}{s}
\newcommand{\rt}{t}
\newcommand{\T}{T}
\newcommand{\M}{M}
\newcommand{\G}{G}
\newcommand{\scale}[2][n]{#2^{(#1)}}
\newcommand{\proj}{\hyperref[def:proj]{\mathcal{P}}}
\newcommand{\scaleproj}{\hyperref[def:scaleproj]{\mathcal{P}_n}}
\newcommand{\CRT}{\hyperref[def:CRT]{\mathcal{T}}}
\newcommand{\lebT}{\lambda_{T}}
\newcommand{\dT}{d_{T}}
\renewcommand{\d}{d}
\newcommand{\dM}{d_{M}}
\newcommand{\dCRT}{d_{\CRT}}
\newcommand{\lebCRT}{\lambda_{\CRT}}
\newcommand{\oCRT}{o_{\CRT}}
\title{Triviality of critical Fortuin-Kasteleyn decorated planar maps for $q>4$}
\author{\begin{tabular}{c} Yuyang Feng\\[2.5pt] \textit{University of Chicago }\end{tabular}}
\date{}
\begin{document}
	\maketitle
	\begin{abstract}
		We consider infinite random planar maps decorated by the critical Fortuin-Kasteleyn model with parameter $q>4$. The paper demonstrates that when appropriately rescaled, these maps converge in law to the infinite continuum random tree as pointed metric-measure spaces, that is, with respect to the local Gromov-Hausdorff-Prokhorov topology. Furthermore, we also show that these maps do not admit any Fortuin-Kasteleyn loops with a macroscopic graph distance diameter. Our proof is based on Scott Sheffield's hamburger-cheeseburger bijection.
	\end{abstract}
	
	\normalem
	
		
	\tableofcontents
 
    \section{Introduction}\label{Chap_Introduction}        
		In the past few decades, there has been extensive research concerning random planar maps. A planar map is a graph embedded into the Riemann sphere in such a way that no two edges cross, viewed modulo orientation-preserving homeomorphisms. Self-loops and multiple edges are allowed. The study of planar maps dates back to the 1960s with Tutte’s attempts at the four-color problem \cite{Tuttetri} \cite{Tutte2} \cite{tutte3} \cite{tutte4}.
		
		One of the motivations for studying random planar maps comes from statistical mechanics. Certain random planar maps are discrete analogs of $\gamma$-Liouville quantum gravity (LQG) surfaces with $\gamma\in (0,2]$, which are random topological surfaces equipped with a measure, a metric, and a conformal structure. Such surfaces play a vital role in many physics models, starting from the foundational work of Polyakov \cite{physicsorigin}. The earliest mathematical studies include \cite{KPZformula} and \cite{LQGandKPZ}. We will not use the mathematical theory of LQG in this paper. For more background on LQG, readers can refer to \cite{gfflqggmc} \cite{gwylqg} \cite{sheICM}. 
		
		This paper will focus on the random planar map decorated by the critical Fortuin-Kasteleyn (FK) model with parameter $q>0$ \cite{Original_paper_of_FK}\cite{she16}. This model can be thought of as a discrete version of 2D gravity in which the geometry of the space interacts with the matter. The critical FK model has a deep relationship with the critical $q$-state Potts model. In fact, their partition functions are equal, see e.g. \cite{equivalence_potts_FK}.
  
        For a planar map $\M$, a spanning subgraph $\G$ is formed by all of the vertices of $\M$ and a subset of the edges of $\M$. The \textbf{dual subgraph} $G^*$ is defined in the following way: We assign each face $f$ of $M$ a vertex $f^*$. When a common edge $e$ of adjacent faces $f$ and $h$ is not in $G$, we draw an edge $e^*$ crossing $e$ and connecting $f^*$ and $h^*$. The vertex set of $G^*$ is given by $\{f^*\}$ and edge set is given by $\{e^*\}$. In fact, $G^*$ is also a spanning subgraph of the dual map of $M$. Next, we draw lines from each $f^* \in G^*$ to each corner of $f$, connecting the vertices of $f$ in $G$. These edges are called \textbf{Tutte edges}. Consider the interfaces between clusters in $G$ and clusters in $G^*$, they will be loops crossing the Tutte edges and we call them \textbf{FK loops}. Let  $\ell(\M,\G)$ be the number of FK loops. Note $\ell(\M,\G) \geq 1$ with equality iff $G$ is a spanning tree. 

         We say a map $M$ is \textbf{rooted} if it is decorated by a distinct Tutte edge. Note this Tutte edge has exactly one endpoint in $M$. We call it the root of the rooted map $M$.
		
		\begin{figure}[htbp]
			\centering
			\vspace{0.2in}
			\includegraphics[scale=0.27]{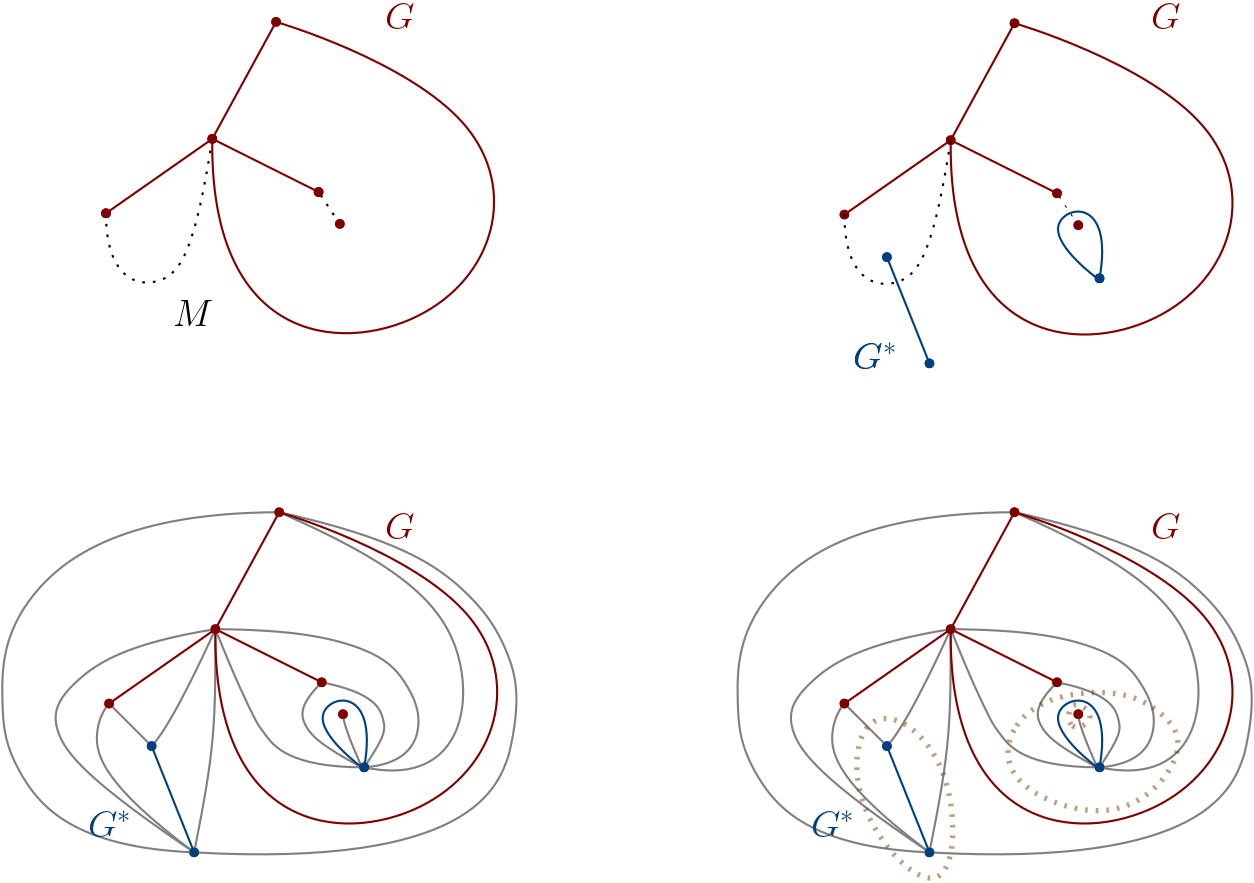}
			\caption{\textbf{Top left:} The spanning subgraph $\G$ is colored red, dotted lines represent the edges of $\M$ which are not in $\G$. \textbf{Top right:} The dual subgraph $\G^*$ is colored blue. \textbf{Bottom left:} The Tutte edges are colored gray. \textbf{Bottom right:} Dotted brown circles are the FK loops, which cross some Tutte edges.}
			\label{fig:Tutte}
		\end{figure}
		
		\begin{definition} \label{def_FK map} 
			Let $\mathcal{M}_n$ be the set of pairs $(M,G)$, where $M$ is a rooted map with $n$ edges, $G$ is a spanning subgraph. For $q \in (0,\infty)$, the law $P_{n,q}$ of the \textbf{critical $q$-FK planar map} is the probability measure on $\mathcal{M}_n$ such that:
			\[
			P_{n,q}(\M, \G) \propto \sqrt{q}^{ \ \ell(\M, \G)}
			\]
			By taking appropriate limits, we can also define $P_{n,q}$ for $q \in \{0,\infty\}$. 
		\end{definition}

        From the definition, we can see $P_{n,q}$ is self-dual, that is, $(M,G) \overset{d}{=} (M^*,G^*)$. This is why we call this law ``critical''. This model is also related to the critical $q$-state Potts model \cite{she16}.
		
		The $q$-FK planar map has a natural interpretation for some special values of $q$. For example, when $q=0$, the law is the uniform measure on pairs consisting of a planar map and one of its spanning trees.
		When $q=1$, since each planar map with $n$ edges has exactly $2^n$ spanning subgraphs, the marginal law of $\M$ is uniform. 
		When $q=2$, the law of $\G$ is the FK representation of the critical Ising model. See Figure \ref{fig:simulation} for computer simulations of FK planar maps.
		
		As explained in the Appendix of \cite{she16}, the scaling limits of finite FK planar maps are conjectured to be as follows. We deliberately do not specify the particular topology of convergence here, but see the discussion below for one possible topology.
		
		\begin{conjecture}\label{conj:q>4}
			When $0 \leq q\leq4$, the $q$-FK planar map converges in the scaling limit to a $\gamma$-LQG surface, where $\gamma\in [\sqrt 2, 2)$ satisfies
			\[
			q=2+2 \cos \left(\frac{\gamma^{2} \pi}{2}\right).
			\]
			When $q > 4$, the $q$-FK planar map converges in the scaling limit to the continuum random tree \cite{CRT}.
		\end{conjecture}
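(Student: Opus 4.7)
The plan is to attack the $q > 4$ half of the conjecture through Sheffield's hamburger--cheeseburger bijection, which encodes the infinite-volume $q$-FK planar map as a doubly infinite i.i.d.\ word in the alphabet $\{\mathrm H, \mathrm C, \mathrm h, \mathrm c, \mathrm F\}$: here $\mathrm H$ and $\mathrm C$ are ``burger'' letters corresponding to tree edges, $\mathrm h$ and $\mathrm c$ are ``order'' letters corresponding to non-tree edges, and $\mathrm F$ is a flexible order whose marginal probability $p(q)$ is strictly positive exactly when $q > 4$. The $q \leq 4$ case is expected to require genuine LQG/SLE technology and is not addressed by this plan.

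From this encoding I would extract a two-dimensional walk $Z_i = (X_i, Y_i)$ tracking the net numbers of unmatched $\mathrm H$- and $\mathrm C$-letters. A direct moment computation shows that when $\mathrm F$ has positive probability, the step covariance of $Z$ is diagonal, so $(n^{-1/2} Z_{\lfloor nt \rfloor})_{t \in \mathbb{R}}$ converges to a two-sided Brownian motion with independent coordinates. The usual contour-function perspective then identifies one of the coordinates with the Aldous encoding of a tree, giving convergence of the underlying spanning tree $T_n$, with rescaled graph metric and counting measure, to the infinite CRT in the local Gromov--Hausdorff--Prokhorov topology rooted at the distinguished vertex.

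To upgrade this to convergence of the full map $M_n$, I would show that every non-tree edge has its two endpoints at tree distance $o(\sqrt n)$, so that graph distances in $M_n$ agree with those in $T_n$ at the relevant scale. Each non-tree edge is encoded by a matched (order, burger) pair in the word, and the matching time between them is a hitting time of $Z$, which the Brownian limit and the positive density of $\mathrm F$-letters force to be sublinear in $n$. The same bound controls the diameter of every FK loop, since each loop visits only Tutte edges coming from matched letter pairs, immediately yielding the non-existence of macroscopic FK loops.

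The main obstacle is converting the annealed walk estimate on matching times into a statement uniform over all non-tree edges in a ball of radius $\sqrt n$ around the root, which is what local GHP convergence demands. I expect this to proceed by establishing a quenched sub-Gaussian tail bound: for each letter in the bi-infinite word the matching time, once suitably normalised, has light tails, a property that genuinely fails for $q \leq 4$ because there the off-diagonal covariance of $Z$ produces the LQG-like heavy-tailed dependence. The remaining bookkeeping --- translating these walk-level bounds through the bijection into explicit metric and measure estimates on $M_n$, and reconciling the infinite-volume construction with the rooting and pointed GHP framework --- I expect to be routine given the sharpness of the walk input.
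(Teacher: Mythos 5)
Two substantive errors in the proposal mean it would not succeed as written.

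First, the limiting correlation structure is inverted. When $q>4$ (equivalently $p>1/2$) the two coordinate walks of Sheffield's encoding do \emph{not} become independent in the limit; by \cite[Theorem 2.5]{she16}, restated as Theorem~\ref{conv_to_BM} here, the covariance parameter is $\alpha=\max(1-2p,0)=0$, so the two coordinates converge to the \emph{same} Brownian motion, i.e.\ they are perfectly correlated. The coordinates are independent only at the opposite end, $q=0$. This is not a cosmetic slip: the CRT appears as the limit precisely because the two encoding trees collapse onto one another; if the coordinates were independent the object would be LQG-like, not a tree. (Also: the probability of the letter $\mathrm F$ is positive exactly when $q>0$; the threshold $q>4$ corresponds to $p>1/2$, not $p>0$.)

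Second, and more seriously, ``every non-tree edge has endpoints at tree distance $o(\sqrt n)$'' does not imply $d_M\approx d_T$ at scale $\sqrt n$. A geodesic in $M$ of length $\Theta(\sqrt n)$ can use $\Theta(\sqrt n)$ non-tree edges, and shortcuts of size $o(\sqrt n)$ each can accumulate to savings of order $n$, destroying the comparison entirely. Indeed, the paper's Theorem~\ref{main_thm_in_introduction} carries a nontrivial constant $\hyperref[def:alpha]{\mathfrak{a}}=\mathbb{E}d_T(o,p(o))/\mathbb{E}d_M(o,p(o))\ge 1$ in the scaling, so $d_M$ and $d_T$ are \emph{not} asymptotically equal in general. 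The argument that actually works is the pinch-point decomposition of Section~\ref{chap_4.1}: reducible subwords produce cut vertices through which every geodesic must pass, the resulting ``bubbles'' are i.i.d.\ by the spatial Markov property (Proposition~\ref{spatial_Markov}), Sheffield's $\mathbb{E}[K]<\infty$ (Proposition~\ref{finite_expectation}) gives integrability, and a law of large numbers shows $\hyperref[def:alpha]{\mathfrak{a}}d_M-d_T$ is $o(n)$ between the root and its $\Theta(n)$th pinch point (Proposition~\ref{bound_pinch}). Uniformity then comes from a union bound over a polynomial-size mesh combined with equicontinuity of the rescaled contour, not from a quenched sub-Gaussian matching-time bound (which the paper neither proves nor needs --- the input is only a first-moment estimate). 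This pinch-point/renewal structure is the core idea of the proof and is absent from your plan. The same machinery, adapted to ``strong'' reducible words whose endpoint is an $\mathrm F$, is what controls the FK loop diameters in Section~\ref{chap_5}; controlling single matching times is again not sufficient there.
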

		
		\begin{figure}[htbp]
			\centering
			\subfigure{
				\includegraphics[width=0.45\textwidth]{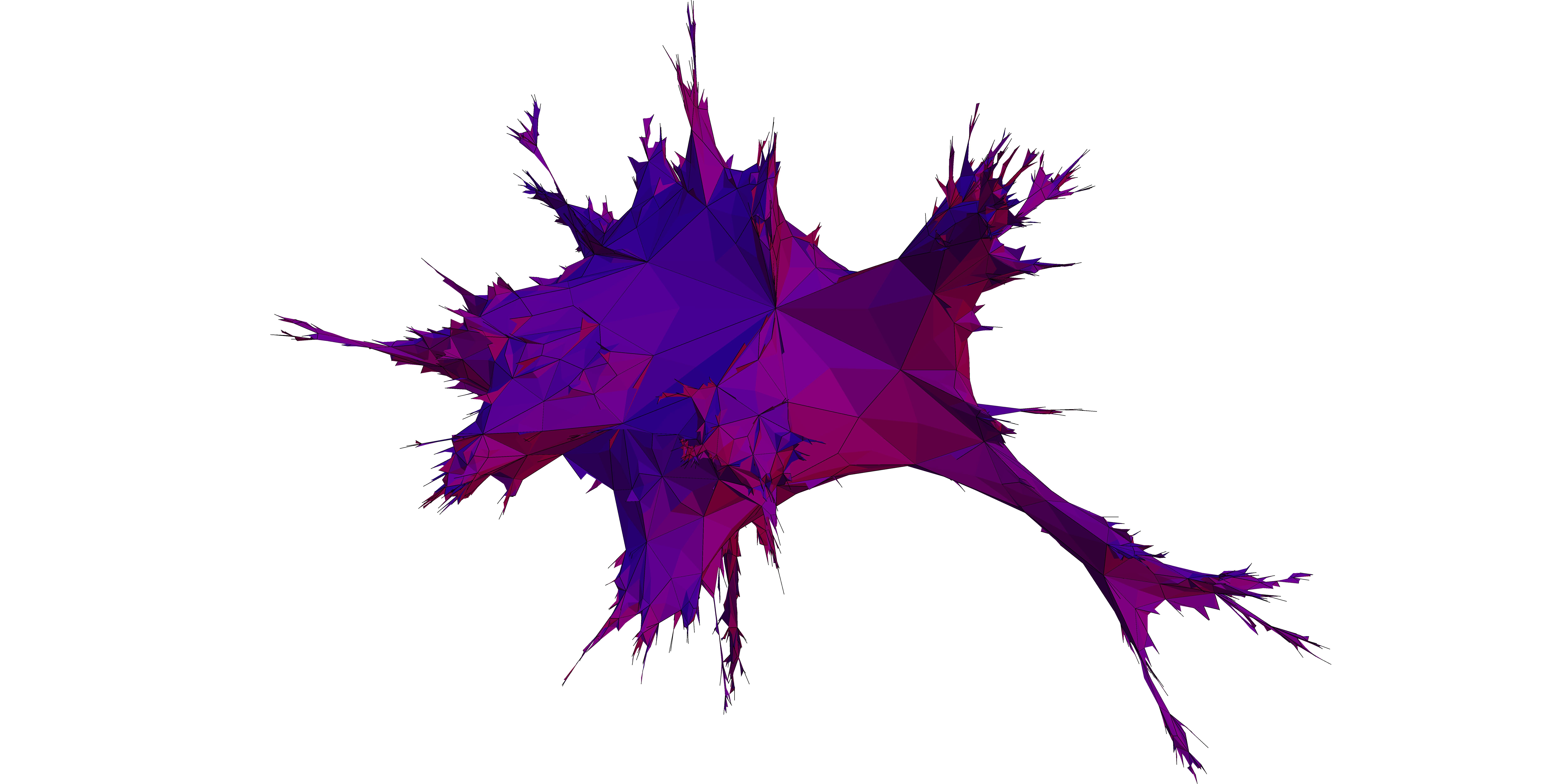}}
			\subfigure{ \includegraphics[width=0.45\textwidth]{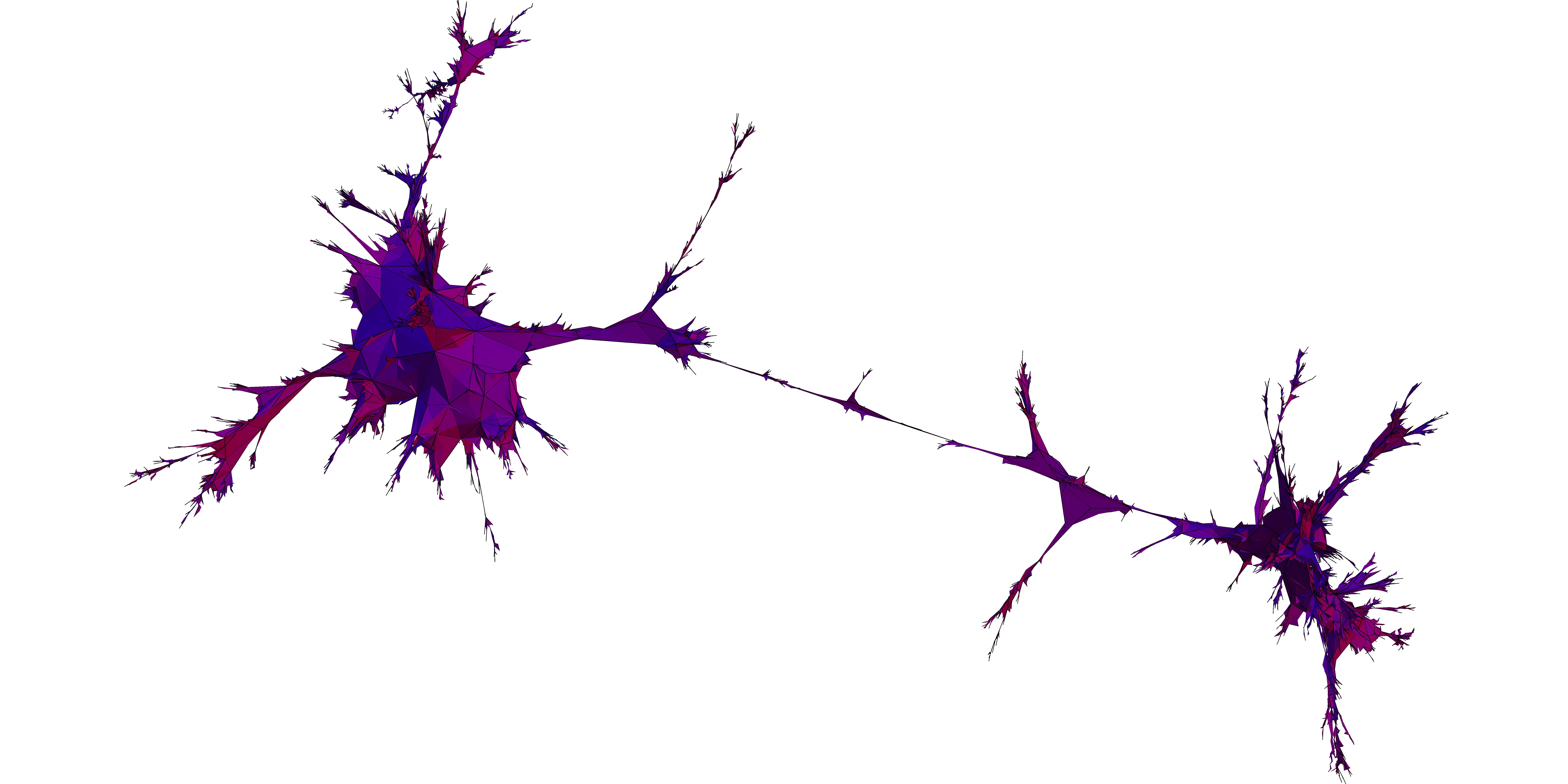}}
			\caption{FK planar maps with $q = 1/2$ (left) and $q=9$ (right) drawn in $\mathbb R^3$, simulated by  \href{http://www.normalesup.org/~bettinel/simul_FK.html}{Jérémie Bettinelli}} 
			\label{fig:simulation}
		\end{figure}

         The Gromov-Hausdorff-Prokhorov (GHP) distance is a distance between \textit{pointed} metric-measure spaces. To view FK maps as pointed metric-measure spaces, we can endow them with graph metrics $d$, measure $\mu$ that give each vertex a mass equal to its degree, and the root vertex $o$ as the specific point. From the definition of FK map, when condition on the base map, the root edge is uniformly selected, thus $o$ is selected according to $\mu$.
		
		This paper will focus on the case when $q > 4$. Our main result is the following theorem, establishing a version of Conjecture~\ref{conj:q>4} in the case $q\leq 4$ in the infinite-volume setting.
		\begin{theorem}\label{main_thm_in_introduction}
			When $q>4$, there is a deterministic constant $\hyperref[def:alpha]{\mathfrak{a}}=\hyperref[def:alpha]{\mathfrak{a}}(q)$, such that as $\varepsilon$ tends to $0$, the rescaled infinite FK map $(M, \varepsilon \hyperref[def:alpha]{\mathfrak{a}}d,\varepsilon^2 \mu, o)$ converges to the infinite continuum random tree in distribution w.r.t.\ the local Gromov-Hausdorff-Prokhorov distance.
		\end{theorem}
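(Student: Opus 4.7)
The plan is to reduce the problem to a statement about a random walk via Sheffield's hamburger--cheeseburger bijection. Under this bijection, the infinite critical $q$-FK planar map corresponds, after a minor completion of the model to the infinite-volume setting, to a two-sided i.i.d.\ sequence $(X_i)_{i \in \mathbb{Z}}$ of letters from an alphabet of size four, whose probabilities are a function of $q$. From this sequence one reads off (i) the combinatorial structure of the map, (ii) the FK loop structure via matched letter pairs, and (iii) a walk $Z_n = (L_n, R_n) \in \mathbb{Z}^2$ counting unmatched burgers of each kind. The regime $q > 4$ is precisely the regime in which the step distribution has finite second moment and its covariance matrix is \emph{degenerate}: the fluctuations of $L + R$ dominate those of $L - R$, so rescaling by $\sqrt{n}$ yields a one-dimensional Brownian motion rather than a non-degenerate planar Brownian motion. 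This is the analytic input that makes the tree, rather than an LQG surface, the correct candidate limit.

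Given this walk convergence, the first sub-goal is to rule out macroscopic FK loops, as advertised in the abstract. A macroscopic loop corresponds to a matched letter pair separated by a long stretch in the word, and I would bound the probability of such a pair by combining classical random walk fluctuation estimates with the one-dimensional nature of the limit: if the walk collapses onto a line under diffusive scaling, then the window over which letters can remain unmatched is $o(n)$ with high probability, and a union bound yields that no FK loop has macroscopic graph-distance diameter. Once this is established, the graph metric on $M$ is, up to controllable perturbations, the tree metric induced by the walk's contour, and the degree-weighted measure $\mu$ translates via the bijection to counting letters in the word. The one-dimensional Brownian scaling limit of the walk then identifies the metric-measure limit as the infinite CRT, with the constant $\mathfrak{a}(q)$ arising from the explicit variance of the limiting Brownian motion.

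The main obstacle, as I see it, is the comparison between the actual graph metric on $M$ and the tree metric read off from the walk. The bijection transparently encodes a tree-like structure (the spanning subgraph together with its dual), but the host map $M$ may a priori create shortcuts that are hard to control. One must show that all such shortcuts are short in graph distance, which is closely tied to the no-macroscopic-loops estimate but requires additional quantitative control, in particular polynomial tail bounds on the graph-distance diameter of the largest loop intersecting a neighborhood of the root. Handling this uniformly over all balls of bounded radius, as demanded by the \emph{local} GHP topology, and pinning down the explicit constant $\mathfrak{a}(q)$, will be the most technical part of the argument. A secondary subtlety is the infinite-volume construction itself: one needs the two-sided walk picture to make sense (in the sense of, e.g., a suitable local limit of $P_{n,q}$), and the local GHP convergence should follow from the walk convergence on each bounded time interval together with the loop-diameter estimate.
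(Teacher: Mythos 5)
Your high-level reduction to the hamburger--cheeseburger walk is correct, and your observation that for $q>4$ the two burger counts converge to the \emph{same} Brownian motion is exactly the analytic input the paper uses. But your route from there to the theorem has two genuine gaps.

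First, the claim that the constant $\mathfrak{a}(q)$ ``aris[es] from the explicit variance of the limiting Brownian motion'' is wrong. The Brownian variance is already absorbed into the definition of the limiting CRT (the paper's $\CRT$ is encoded by $L$, which is $\tfrac12$ times a standard two-sided Brownian motion). The constant $\mathfrak{a}$ is a separate, genuinely combinatorial quantity: it is defined as $\mathfrak{a}=\E d_T(o,p(o))/\E d_M(o,p(o))$, where $p(o)$ is the first ``pinch point'' of the root --- the vertex $\hyperref[def:V and E]{V}(s)=\hyperref[def:V and E]{V}(t+1)$ bounding the innermost reducible subword $X(s,t)$ containing $0$. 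Without the pinch-point structure you have no way to even write down $\mathfrak{a}$.

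Second, and more importantly, ruling out macroscopic FK loops does not let you conclude that the $M$-metric is the $T$-metric ``up to controllable perturbations.'' Even with all loops microscopic, the non-tree edges of $M$ create shortcuts that reduce distances by a \emph{constant multiplicative factor} (namely $\mathfrak{a}^{-1}$), not by a negligible additive amount. The paper's mechanism for this comparison is entirely different from your loop-control route: one observes that reducible subwords produce a nested family of bubbles pinched at cut-vertices, so that every geodesic in $M$ or in $T$ from $o$ to the $k$-th pinch point must visit each intermediate pinch point. By the spatial Markov property (the i.i.d.\ structure of the word), the increments $\mathfrak{a}\,d_M(p_i(o),p_{i+1}(o))-d_T(p_i(o),p_{i+1}(o))$ are i.i.d.\ with mean zero, and the ergodic theorem (plus Doob's maximal inequality for uniformity over the ball) shows $\mathfrak{a}\,d_M - d_T$ is $o(n)$ uniformly on scale-$n^2$ windows. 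This is the heart of the proof and it does not pass through the loop estimate at all. Indeed, the paper proves the loop result (Theorem~\ref{thm:main-chap5-loop}) separately in its final chapter, and it is not an input to Theorem~\ref{main_thm_in_introduction}. Your proposal, as stated, is missing the pinch-point decomposition and the renewal/LLN argument, which are precisely the ideas needed both to define $\mathfrak{a}$ and to prove the concentration.
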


        The \textbf{infinite $q$-FK planar map} is the Benjamini-Schramm local limit \cite{local_limit} of the finite $q$-FK planar map. Roughly speaking, for any $r\in \mathbb{Z}_{\geq 0}$, the law of $r$-ball around the root vertex of this map is the limit of the law of the $r$-ball in finite FK map. See \cite{che17} for proof of convergence. The infinite FK map also has an alternative description using random walks which we shall see in Chapter \ref{Chap_Bijection}.  
		The infinite FK map in the $q \leq 4$ case has been studied in \cite{FK_exponents} and \cite{FK1}\cite{FK2}\cite{FK3}, which computed exponents associated with the length and the loop area, etc.\ and described the scaling limits of some of these quantities.	

		The \textbf{infinite continuum random tree} (CRT) is an infinite version of classical CRT in \cite{CRT}, encoded by a two-sided Brownian motion starting at zero. The \textbf{local GHP distance} is a local version of GHP distance, which applies to locally compact pointed space endowed with a locally finite measure. We will see the exact definitions in Chapter \ref{Chap_Tree_to_CRT}.
		
		\begin{remark}
			With more work, we can also prove $(M, \varepsilon \hyperref[def:alpha]{\mathfrak{a}}d,\varepsilon^2 \mathfrak{b}\mu_v, o)$ converges to the CRT, where $\mu_v$ is the counting measure on vertices. Here, $\mathfrak{b}$ is the expectation of the root's degree in $M$. The degree of the root is proven to have an exponential tail distribution by \cite[Proposition 6]{che17}, hence $\mathfrak{b}$ is finite.
		\end{remark}
		
		The FK loops in the subcritical and critical cases are believed to converge to the \textbf{conformal loop ensemble} with parameter $\kappa \in [4,8)$ (see e.g., \cite{Scott_CLE} and the Appendix of \cite{she16}), where
		$$q=2+2 \cos \left(\frac{8\pi}{\kappa}\right).$$        
		In this paper, we will see that in the supercritical case, the FK loops will vanish via the scaling process in Theorem \ref{main_thm_in_introduction}. To be more precise, 
		
		\begin{theorem}\label{thm:main-chap5-loop}
			Define the diameter of the FK loop to be the max $M$-distance between the endpoints of Tutte edges it crosses. Let $S_n$ be the collection of FK loops that intersect\footnote{We say an FK loop intersects a vertex set $B$ if it crosses a Tutte edge with an endpoint in $B$. } the graph distance ball of radius $n$ around the root vertex. Then 
			$$\sup_{\ell \in S_n}  \frac{1}{n}\operatorname{diam} \ \ell \rightarrow 0, \quad \text{in probability.}$$
            where $\operatorname{diam} \ell$ denotes the greatest distance between any vertices of $M$ which are endpoints of Tutte edges intersected by $\ell$.
		\end{theorem}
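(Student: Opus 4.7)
The plan is to work within Sheffield's hamburger--cheeseburger bijection, which encodes the infinite $q$-FK map as a bi-infinite random word $(X_i)_{i \in \mathbb{Z}}$ in the alphabet $\{\mathsf{H}, \mathsf{C}, \mathsf{h}, \mathsf{c}, \mathsf{F}\}$. Each letter corresponds to a corner of $M$, the vertex containing a given corner is determined by the matching rule of the bijection, and each FK loop $\ell$ is produced by a specific matched pattern in the word; thus $\ell$ has a natural time window $[a_\ell, b_\ell] \subset \mathbb{Z}$ containing every corner participating in its formation. In particular, every Tutte edge crossed by $\ell$ has both endpoints among vertices indexed by corners in $[a_\ell, b_\ell]$, so $\operatorname{diam}(\ell)$ is bounded above by the $M$-diameter of that vertex set.

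First I would reduce the statement to a walk-time estimate. The proof of Theorem~\ref{main_thm_in_introduction}, specifically the CRT scaling of the contour/height function of the word, implies that with probability tending to $1$: (a) the graph-distance ball of radius $n$ around the root corresponds to corners contained in a walk-time interval of length $O(n^2)$, and (b) any two corners at walk-time distance at most $L$ correspond to vertices at $M$-distance at most $O(\sqrt{L})$ --- the latter being essentially a tightness estimate internal to the CRT convergence argument. Combining (a) and (b), it suffices to prove $\max_{\ell \in S_n}(b_\ell - a_\ell) = o(n^2)$ in probability.

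Next I would establish a tail bound for the walk-time length of an individual FK loop. Here the hypothesis $q > 4$ enters essentially: the flexible orders $\mathsf{F}$ appear with strictly positive asymptotic density and rapidly balance the hamburger and cheeseburger counts, so the excursion determining a loop is expected to close quickly. Quantitatively, the goal is to show $\mathbb{P}(b_\ell - a_\ell \geq L) \leq f(L)$ for some function $f$ decaying faster than any polynomial --- plausibly exponentially --- via a regeneration or matching argument on the bi-infinite word. A union bound over the $O(n^2)$ candidate starting indices in the relevant walk-time interval then yields $\max_{\ell \in S_n}(b_\ell - a_\ell) = o(n^2)$ in probability, which is the desired conclusion.

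The hardest step will be the tail estimate on a single loop's walk-time extent. For $q \leq 4$ the FK loops are conjecturally macroscopic (they should converge to $\mathrm{CLE}_\kappa$), so any correct argument must genuinely exploit the supercritical regime, i.e.\ the strict positivity of the density of flexible orders in the word; I expect this step to contain the bulk of the technical work, while the reduction to a walk-time statement should follow directly from tools already developed en route to Theorem~\ref{main_thm_in_introduction}.
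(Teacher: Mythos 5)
Your high-level reduction is sound and parallels the paper: restrict attention to corners in a walk-time window of length $O(n^2)$ (the paper's Lemma~\ref{Lemma_in_4.2}), and use modulus-of-continuity control on the contour/height function (which appears in the proof of Theorem~\ref{main_4.2}) to pass between walk-time and graph distance. The paper also works with a specific matched pattern associated to loops --- the ``strong reducible word'' $X(a,b)$ with $X(a)=\texttt{a}$, $X(b)=\texttt{F}$, $a,b$ matched --- which ensures each FK loop is entirely contained in a strong bubble; your ``natural time window'' of a loop is essentially this object. Where you diverge, and where the gap lies, is the final step.

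You propose to union-bound over the $O(n^2)$ candidate starting indices, which requires $n^2 \cdot \mathbb{P}(b_\ell - a_\ell \geq \varepsilon n^2) \to 0$, i.e.\ a tail $f(L) = o(L^{-1})$ on the walk-time extent of a single loop, and you conjecture that $f$ decays exponentially. Neither claim is established in the paper or in \cite{she16}, and the paper deliberately does not attempt to prove anything of this strength. The only quantitative input from \cite{she16} that the paper uses is \cite[Proposition 3.8]{she16}, which gives a \emph{first moment} bound $\mathbb{E}[K] < \infty$ on the $T$-distance from $o$ to its first pinch point; by Markov that yields only an $O(L^{-1})$ tail (and on $T$-distance, not walk-time), which is exactly one power short of what your union bound over $O(n^2)$ starting times needs. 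The paper circumvents this with a two-scale argument: using the modulus of continuity of $d_T$ (the event $\mathfrak{B}_n$ in Theorem~\ref{thm:main_5.1}), an arbitrary corner in $[-Rn^2,Rn^2]$ is replaced by a nearby point on a coarse grid $S_n = \lfloor\varepsilon n^2\rfloor\mathbb{Z}\cap[-Rn^2,Rn^2]$ of \emph{bounded} cardinality $O(1+R/\varepsilon)$ independent of $n$; the union bound is then over these $O_\varepsilon(1)$ grid points, and for each one, the spatial Markov property together with Doob's maximal inequality and the law of large numbers (Lemma~\ref{lem:chap5_distance_between_strong_pinch_points}, which only needs $\mathbb{E}\,d_T(o,s(o))<\infty$) controls the increments $d_T(s_{k-1}, s_k)$ up to index $\delta n$. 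So the genuine missing idea in your proposal is that the union bound can be taken over a fixed finite set of reference times rather than all $O(n^2)$ of them, at the cost of a modulus-of-continuity correction --- this is precisely what lets the argument run on a first-moment bound instead of a super-polynomial tail that is not available.

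A secondary point: even if one had the tail bound, a naive union bound over starting indices is not quite well-posed, since the strong reducible intervals containing nearby times heavily overlap and are far from independent; the paper organizes the argument through the nested pinch-point decomposition and the (strong) spatial Markov property (Corollary~\ref{prop:strong_markov}) precisely to get i.i.d.\ increments to which Doob and the LLN apply. Your ``regeneration or matching argument'' gestures at this, but the pinch-point decomposition is the concrete device that makes it work.
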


        \begin{remark}
             Another topology with respect to which convergence of random planar maps has been studied in \cite{she16}. In short, we can glue together, in certain sense, a pair discrete plane trees associated with a certain pair of random walks on $\mathbb{Z}$, to encode maps $\{M_i\}$ (see chapter \ref{Chap_Bijection} for details). When the walks converges to correlated continuous functions which encode map $M$, we say $M_i \to M$ in \textbf{mating of trees} topology. In \cite{she16}, it is proved the random walks which encode FK maps will converge to correlated Brownian motion in distribution when $q < 4$, and converge to identical Brownian motion when $q \geq 4$. The trees that Brownian motion associated with are CRTs. Gluing together two identical CRT just gives back to the original CRT. We also know from the monumental paper \cite{DMS14} that we can glue together the CRTs associated with two correlated Brownian motions to get an LQG surface. Thus, paper \cite{she16} proves the conjecture \textit{in mating of trees sense} when $q \neq 4$. However, mating of trees convergence is far from being equivalent to GHP convergence. For example, when $q=4$, the mating of trees convergence says they should converge to CRT. But in GHP sense, the conjecture is that $q$-FK planar maps converge to critical LQG.
        \end{remark}

        \begin{remark}
           For many random planar map models, the planar maps are expected or proven to converge to the CRT for parameter values in which the model is, in some sense, ``degenerate". See the lecture notes \cite{curienlec} for some examples. Also, for the maps that belong to the universality class of \textit{supercritical} LQG, they are believed to converge to CRT when conditioned to be finite. See e.g. \cite{laplacian}\cite{Brownian_loops_and_the_central_charge_of_LQG}\cite{ang2023supercritical} for some discussions.
        \end{remark}

		The rest of the paper is organized as follows: In Chapter \ref{Chap_Bijection}, we will explain the hamburger-cheeseburger bijection in \cite{she16}, which encodes the infinite FK planar map by an infinite random walk and which will be the main tool in our proof. In Chapter \ref{Chap_Tree_to_CRT} we will give the construction of infinite CRT, and prove that it is the local GHP scaling limit of the spanning tree on the infinite FK planar map which appears in the hamburger-cheeseburger bijection. In Chapter \ref{Chap_map_to_tree}, we will prove the convergence of the FK map to the CRT. In the last chapter, we will see there is no macroscopic FK loop.

    \section*{Acknowledgements}
        The author wishes to express profound gratitude to Ewain Gwynne for suggesting this topic and for the insightful discussions that greatly inspired the author. Additionally, many thanks go to Jian Ding and Xinyi Li for funding the author's visit to Peking University, where part of this work was conducted.

	\section{Hamburger-Cheeseburger bijection}\label{Chap_Bijection}
		In this section, we will describe a bijection from random words to planar maps decorated by the FK model, which is known as the Hamburger-Cheeseburger bijection \cite{she16}. We will mainly describe the correspondences in the infinite case. They are similar to the finite case which has been discussed in \cite{she16}. 
		
		Recall the definitions above Figure \ref{fig:Tutte}, we can see the Tutte edges in fact form a quadrangulation\footnote{If a planar map with each face having 4 edges, we call it a quadrangulation}, which we denote by \textbf{$Q$}. Note for each face of $Q$, exactly one of its diagonals appears in either $G$ or $G^*$. From this point of view, we see $\hyperref[def:Tutte triangulation]{\mathbb{T}}:=Q\cup G \cup G^*$ as a canonical triangulation of $(M, G)$, and call it the \textbf{Tutte triangulation}\label{def:Tutte triangulation} of $(M, G)$. Note that $(M, G)$ and $\hyperref[def:Tutte triangulation]{\mathbb{T}}$ uniquely determine each other.
		
		Next, we introduce the inventory accumulation model in \cite{she16}. Imagine in a busy restaurant, the kitchen produces hamburgers and cheeseburgers, while the customers order these two kinds of burgers. We use $X=(X_n)_{n\in \mathbb{Z}}$ to record the burger-order sequence. Here, $X_n \in \{\texttt{a}, \texttt{b},\texttt{A},\texttt{B},\texttt{F}\}$, and the letters represent respectively, a hamburger, a cheeseburger, a hamburger order, a cheeseburger order, and a flexible order, that is, the order can be fulfilled by both $\texttt{a}$ and $\texttt{b}$. The burger-order stack follows the last-in-first-out (LIFO) rule, i.e., the customer always takes the freshest burger of the specified kind. For integers $s < t$, let $X(s,t)$ denote the subword $X_s X_{s+1} \dots X_{t-1} X_t$. The reduced form $\overline{X(s,t)}$ of $X(s,t)$ is obtained by applying the following relations:
		\[
		\texttt{a}\texttt{A}=\texttt{b}\texttt{B}=\texttt{a}\texttt{F}=\texttt{b}\texttt{F}=\emptyset, \quad  \texttt{a}\texttt{B}=\texttt{B}\texttt{a}, \quad \texttt{b}\texttt{A}=\texttt{A}\texttt{b}.
		\]
		In other words, $\overline{X(s,t)}$ consists of the unfulfilled orders and the unconsumed burgers between time $s$ and $t$. We say $s$ and $t$ are \textbf{matched} if the burger at time $s$ is consumed by the order at time $t$. We say $X(s,t)$ is \textbf{reducible}\label{def:reducible} if $\overline{X(s,t)}$ is empty. We can easily get the following property by the LIFO rule.
		
		\begin{lemma}[Locality]\label{lemma_locality}
			For $s<t$, whether $s$ and $t$ are matched only depends on $(X_i)_{i\in [s,t]}$. In particular, whether $X(s,t)$ is reducible only depends on $(X_i)_{i\in [s,t]}$.
		\end{lemma}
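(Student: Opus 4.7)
The plan is to define a \emph{local} matching procedure using only the letters $X_s, X_{s+1}, \ldots, X_t$ and show that it agrees with the global matching on pairs of indices lying inside $[s,t]$. Concretely, I process the window $X(s,t)$ left-to-right with an initially empty LIFO stack: push each burger letter ($\texttt{a}$ or $\texttt{b}$) and use each order letter ($\texttt{A}$, $\texttt{B}$, or $\texttt{F}$) to pop the freshest compatible burger from the stack (with $\texttt{F}$ popping the freshest burger of either type), declaring the two involved positions \emph{locally matched}; any order that finds no compatible burger is stored as unfulfilled. Equivalently, this procedure is exactly what the word reduction rules produce when applied to $X(s,t)$ with no letters appended on either side.

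The heart of the argument is a single freshness observation: every burger produced at a time in $[s, t]$ is strictly fresher than every burger produced before time $s$. Since the LIFO rule always selects the freshest matching burger, an in-window order whose matching burger is available in-window can never reach back to pre-$s$ inventory. I would formalize this as the invariant: for every $i \in [s, t]$, after the global process has handled $X_s, \ldots, X_i$, the portion of the global stack consisting of elements originating in $[s, i]$ is identical to the stack produced by the local procedure on $X_s \cdots X_i$. The only way the two processes ever diverge is that a locally unfulfilled order may, globally, consume some pre-$s$ burger or be stored atop the pre-$s$ inventory; but any such divergence takes place strictly below the in-window region of the stack and therefore cannot influence which in-window indices get matched to one another.

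With the invariant in hand, any matched pair $(i, j)$ with $s \leq i < j \leq t$ is matched identically under the global and local processes, which yields the first claim. For the second claim, $X(s,t)$ is reducible precisely when the local procedure terminates with an empty stack, equivalently when every index in $[s,t]$ is locally matched to another index in $[s,t]$, and this is manifestly a function of $X_s, \ldots, X_t$ alone. The one subtle point I anticipate is the flexible order $\texttt{F}$, which performs a cross-type freshness comparison; the same uniform freshness bound (every in-window burger is fresher than every pre-$s$ burger) settles this case as well, so I do not expect any genuine obstacle beyond careful bookkeeping.
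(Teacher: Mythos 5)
The paper offers no written proof here (the sentence preceding the lemma simply asserts that it follows from the LIFO rule), so your detailed argument is filling in what the authors regard as immediate, and it does so correctly: the single observation that every burger produced at a time in $[s,t]$ is fresher than every burger produced before time $s$ is exactly what makes the LIFO dynamics local, and it handles the $\texttt{F}$ case for the same reason it handles $\texttt{A}$ and $\texttt{B}$. Two small points of precision. First, the stated invariant is correct only if ``the stack'' means the \emph{burger} stack (orders are never stored for later matching by a future burger, since the reduction rule is $\texttt{a}\texttt{A}=\emptyset$, not $\texttt{A}\texttt{a}=\emptyset$); an in-window order that is locally unfulfilled may be globally matched to a pre-$s$ burger, so the full reduced words do not agree on the window, but the in-window unconsumed-burger lists do, and that is what determines which in-window indices are matched --- your caveat about divergence ``below the in-window region'' is gesturing at exactly this, but it is worth saying explicitly that the divergence lives entirely among pre-$s$ burgers and unfulfilled orders, neither of which can ever be matched to an in-window index. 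Second, in the reducibility claim, ``terminates with an empty stack'' should read ``terminates with no unconsumed burgers \emph{and} no unfulfilled orders,'' which is what your rephrasing ``every index in $[s,t]$ is locally matched to another index in $[s,t]$'' already captures. With these two clarifications the argument is complete.
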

		
		Now we define a probability law for a bi-infinite word $X$. Let $p\in [0,1)$. We declare the symbols $X_n$ are independent samples from the distribution $\theta_p$ defined as follows:
		\[
		\theta_p(\texttt{a})=\theta_p(\texttt{b})=\frac{1}{4}, \quad \theta_p(\texttt{A})=\theta_p(\texttt{B})=\frac{1-p}{4}, \quad \theta_p(\texttt{F})=\frac{p}{2}.
		\]
		The law of $X$ is given by $\P_p={\theta_p}^{\otimes \mathbb{Z}}$.
		
		\begin{lemma}[\cite{she16}] For all $p \in [0,1]$, almost surely:
			\begin{itemize}
				\item[1.] Each burger (resp. order) has a matched order (resp. burger).
				\item[2.] For any time $t$, $\overline{X(-\infty,t)}$ contains infinitely many $\texttt{a}$ and infinitely many $\texttt{b}$.
			\end{itemize}
		\end{lemma}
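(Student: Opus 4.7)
Plan. The central tool is the process
\[
D_n := \#\{1 \leq i \leq n : X_i \in \{\texttt{a}, \texttt{b}\}\} - \#\{1 \leq i \leq n : X_i \in \{\texttt{A}, \texttt{B}, \texttt{F}\}\},
\]
a simple symmetric random walk on $\mathbb{Z}$ under $\P_p$ because $\theta_p$ assigns total mass $1/2$ to burgers and $1/2$ to orders regardless of $p$. Hence $\limsup_n D_n = +\infty$ and $\liminf_n D_n = -\infty$ almost surely, and analogously the backward walk $\tilde D_k$ over the half-line $[-k, 0]$ satisfies $\limsup_k \tilde D_k = +\infty$ a.s.

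I shall split Statement~1 into the burger half (``every burger has a matched order'') and the order half (``every order has a matched burger''), and deduce the latter from Statement~2. For the burger half, by stationarity fix $X_0 = \texttt{a}$; by LIFO later hamburgers stack above the $\texttt{a}$ at $0$ in $W_n := \overline{X(0, n)}$, so this $\texttt{a}$ is the bottom-most un-cancelled hamburger until it disappears, which occurs precisely when $a(W_n)$ first drops to $0$ at the stopping time $\tau := \inf\{n \geq 1 : a(W_n) = 0\}$. I would prove $\tau < \infty$ a.s.\ by comparison with a null-drift random walk. For $p = 0$ the process $a(W_n) - 1$ is dominated above by the mean-zero recurrent walk $H_n := \#\{1 \leq i \leq n : X_i = \texttt{a}\} - \#\{1 \leq i \leq n : X_i = \texttt{A}\}$, which hits $-1$ a.s. For $p \in (0, 1]$ the state-dependent $\texttt{F}$-cancellations require more care: invoking the $\texttt{a} \leftrightarrow \texttt{b}$ symmetry of $\theta_p$ together with an ergodic averaging argument, one shows that asymptotically half of the $\texttt{F}$-cancellations target $\texttt{a}$'s, which restores zero drift to the dominating walk.

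For Statement~2 at $t = 0$ (by stationarity), write $\alpha_k$ for the number of burgers in $[-k, 0]$ matched (in the bi-infinite word) with orders in $(0, \infty)$, $\beta_k$ for the number of orders in $[-k, 0]$ matched with burgers in $(-\infty, -k-1]$, and $u^o_k \geq 0$ for the number of orders in $[-k, 0]$ unmatched in the bi-infinite word. By the burger half there are no unmatched burgers, and a direct count of matched pairs straddling $\{-k-1\}$ or $\{1\}$ yields $\tilde D_k = \alpha_k - \beta_k - u^o_k$, hence $\alpha_k \geq \tilde D_k$. Since $\alpha_k$ is non-decreasing in $k$ and $\limsup_k \tilde D_k = +\infty$ a.s., $\alpha_\infty := \lim_k \alpha_k = +\infty$ a.s., so $\overline{X(-\infty, 0)}$ contains infinitely many burgers. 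Under the $\texttt{a} \leftrightarrow \texttt{b}$, $\texttt{A} \leftrightarrow \texttt{B}$ symmetry of $\P_p$ (fixing $\texttt{F}$), matchings are preserved and the counts of $\texttt{a}$'s and $\texttt{b}$'s in $\overline{X(-\infty, 0)}$ are equidistributed; the common probability that this count equals $+\infty$ is at least $1/2$, and since the event ``count $=+\infty$'' is shift-invariant, ergodicity of the iid law $\P_p$ forces it to equal $1$. The order half of Statement~1 then follows immediately: any order at position $j$ is matched with the most recent suitable unmatched burger in $\overline{X(-\infty, j - 1)}$, guaranteed by Statement~2 applied at $t = j - 1$.

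The main obstacle is the $\texttt{F}$ letter in the burger half of Statement~1: because $\texttt{F}$'s cancellation target is state-dependent, $a(W_n)$ is not itself a random walk, and its drift at non-zero states depends on the probability that the stack top is an $\texttt{a}$. I expect this to be resolved by the $\texttt{a}/\texttt{b}$ exchange symmetry of $\theta_p$ combined with an ergodic-theoretic argument to pin the asymptotic stack-top probability at $1/2$, yielding a null-drift dominating walk and the required recurrence.
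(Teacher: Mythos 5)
The paper itself gives no proof here (it cites \cite{she16}), so I can only assess your argument on its own terms. The overall architecture is sensible: reduce everything to recurrence of the symmetric walk $D_n$, split Statement~1 into a ``burger half'' and an ``order half,'' derive Statement~2 from the burger half and $\limsup_k \tilde D_k = +\infty$, and then get the order half from Statement~2. The derivation of Statement~2 (once the burger half is in hand) and the deduction of the order half from Statement~2 are correct, as is the $p=0$ case of the burger half, where $a(W_n)-1$ really is dominated by the lazy mean-zero walk $\#\texttt{a} - \#\texttt{A}$.

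The genuine gap is exactly where you flag it: the burger half for $p>0$. First, the naive dominating walk fails outright: for $p>0$ one has $\theta_p(\texttt{a}) = \tfrac14 > \tfrac{1-p}{4} = \theta_p(\texttt{A})$, so $H_n = \#\texttt{a} - \#\texttt{A}$ has strictly positive drift $p/4$ per step and is transient; the $\texttt{F}$-cancellations are not an optional refinement but are essential to kill the drift. Second, the proposed fix --- that ``asymptotically half of the $\texttt{F}$-cancellations target $\texttt{a}$'s'' --- does not deliver a dominating null-drift walk even if true. Given the history $W_{n-1}$, whether the next $\texttt{F}$ eats an $\texttt{a}$ is \emph{deterministic} (it depends on the top of the current burger stack), so the one-step drift of $a(W_n)$ at a given state can be strictly positive; a statement about the long-run fraction of $\texttt{F}$'s eating hamburgers is an averaged, not pointwise, bound, and it does not yield a stochastic domination by any random walk. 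Moreover, the unconditional $\texttt{a} \leftrightarrow \texttt{b}$ symmetry does not obviously pin the relevant \emph{conditional} stack-top probability at $1/2$: you are conditioning on $X_0 = \texttt{a}$ and on the history of $W$, and the conditioning breaks the symmetry. To close this gap you would need either a genuinely pointwise comparison argument for the combined process $(a(W_n), b(W_n))$, or a different route (for instance, proving Statement~2 first without using the burger half --- note that $\alpha_k + u^b_k = \mathcal{B}^-_k \ge \tilde D_k$ regardless, since the burger part of the backward reduced word is non-decreasing --- then deducing the order half, and only afterwards attacking the burger half). As written, the claim ``yielding a null-drift dominating walk and the required recurrence'' is not justified, and this is the crux of the lemma.
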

		
		For a word with the above properties, we can use it to encode an infinite rooted planar map $\M$ decorated by a spanning subgraph $\G$. Moreover, $\M$ is locally finite \cite{che17}. Now we explain the construction.
		
		\begin{figure}[htbp]
			\centering
			\vspace{0.2in}
			\includegraphics[scale=0.22]{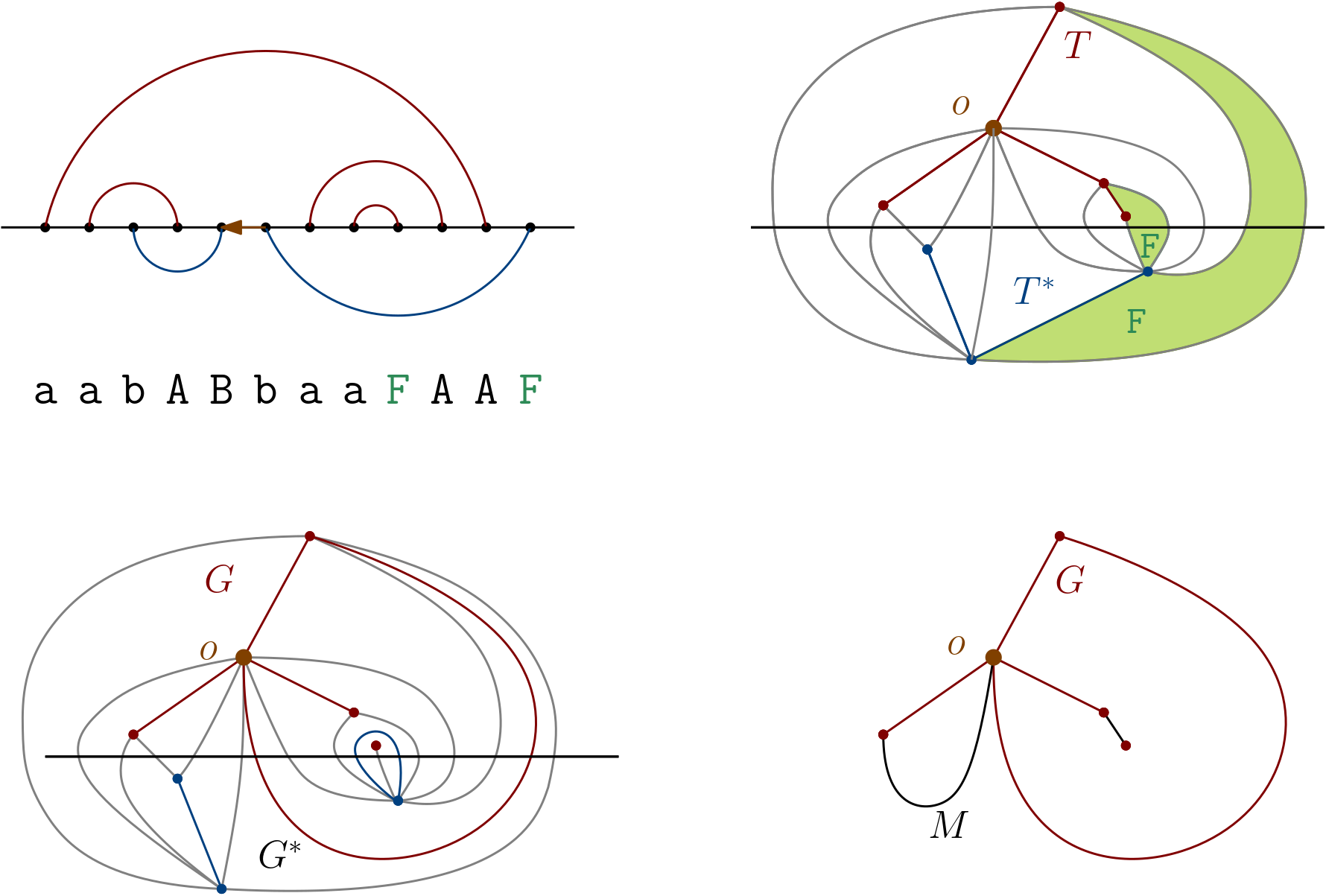}
			\caption{Hamburger-Cheeseburger bijection. Triangle faces corresponding to \texttt{F} are colored green.} 
			\label{fig:H-C bijection}
		\end{figure}
		
		We begin with the real axis with the vertex set $\mathbb{Z}$. For matched $i$ and $j$ ($i<j$), if $X_i$ is a hamburger (resp. cheeseburger), we draw a red (resp. blue) arc connecting $i$ and $j$ above (resp. below) the real axis. We get an arc map $\hyperref[arc map]{\mathbb{A}}$\label{arc map}, whose edge set is the union of the red arcs, the blue arcs, and the segments of the real line between consecutive points of $\mathbb Z$. See the top left in Figure \ref{fig:H-C bijection}. 
		
		In the second step, we take the dual map of $\hyperref[arc map]{\mathbb{A}}$ and color it as follows: Note every face of the arc map is either above or below the axis. For each face above the real axis, we put a red vertex inside it as the vertex in the dual map, and for the face below we put a blue one. For each red (resp. blue) arc, we draw a red (resp. blue) line crossing it as an edge in the dual map of $\hyperref[arc map]{\mathbb{A}}$. We color it gray for an edge in the dual map connecting a red point and a blue point, which will cross the real line. See the top right in Figure \ref{fig:H-C bijection}.
		
		Now we get a triangulation as the dual map of $\hyperref[arc map]{\mathbb{A}}$. Each triangle face contains exactly one $t \in \mathbb{Z}$, and is formed by two gray edges (that cross segments $[t-1,t]$ and $[t+1,t]$ respectively) and one red or blue edge. These gray edges are exactly the Tutte edges of $(M,G)$, and form the quadrangulation $Q$. For each quadrangular face of $Q$, one of its diagonals is colored either red or blue. We say the two diagonals of the same quadrangle are \textbf{dual}\label{def:dual_diagonal} to each other. Now we define the planar map $\M$ corresponding to our infinite word $X$ to be the map formed by the red edges, the red vertices, and the dual edges of the blue edges. The subgraph formed by red edges and red vertices is a spanning tree of $\M$, and we call this tree $\T=\T(\G)$ \label{def:canonical spanning tree} the \textbf{canonical spanning tree} of $\G$. In fact, the triangulation is exactly the Tutte triangulation $\hyperref[def:Tutte triangulation]{\mathbb{T}}$ of $(M,T)$.
		
		\label{Step3 of HC bijection} The final step is to obtain $(M,G)$ from $(M,T)$. we select all of the \texttt{F}-triangles (colored green in Figure \ref{fig:H-C bijection}), then flip the colored edge in this triangle to its dual edge. See bottom left of Figure \ref{fig:H-C bijection}. Now the new red graph is our $\G$. See the bottom left of Figure \ref{fig:H-C bijection} as the Tutte triangulation $\hyperref[def:Tutte triangulation]{\mathbb{T}}$ of $(\M,\G)$. See the bottom right of Figure \ref{fig:H-C bijection} for $(\M,\G)$.

        \vspace{0.1in}
		\noindent\textbf{Exploration line.} \label{exploration_line} For a triangle face in $\hyperref[def:Tutte triangulation]{\mathbb{T}}$ that contains $t \in \mathbb{Z}$, we denote it by \textbf{$\hyperref[def:Tri]{\mathrm{Tri}}(t)$}. Moreover, let \textbf{$\hyperref[def:Tri]{\mathrm{Tri}}[a,b]$} \label{def:Tri} be the planar submap $\subset \hyperref[def:Tutte triangulation]{\mathbb{T}}$ consisting of the union of the triangle faces $\{\hyperref[def:Tri]{\mathrm{Tri}}(t), t \in \mathbb{Z} \cap [a,b] \}$. We call $\hyperref[def:Tri]{\mathrm{Tri}}(t)$ a hamburger triangle (or a $\texttt{a}$-triangle) if $X(t)=\texttt{a}$, and so forth. The real axis is actually the timeline that we produce burgers and order burgers. A burger in time $s$ is consumed by an order in time $t$ iff $\hyperref[def:Tri]{\mathrm{Tri}}(s)$ and $\hyperref[def:Tri]{\mathrm{Tri}}(t)$ share the same red or blue edge (i.e they stay in the same quadrangle in $Q$).
		
		\vspace{0.1in}
		\noindent\textbf{Root.}
		We denote the collection of Tutte edges that cross $[a-1,b]$ in the real axis by \textbf{$\hyperref[def:V and E]{E}[a,b]$}\label{def:E}. For simplicity, we denote the only Tutte edge that crosses $[t-1,t]$ by \textbf{$\hyperref[def:V and E]{E}(t)$}. Each Tutte edge has one endpoint in $M$ (the red one) and one in the dual map of $M$ (the blue one). Let \textbf{$\hyperref[def:V and E]{V}[a,b]$}\label{def:V} (resp. $\hyperref[def:V and E]{V^*}[a,b]$) be the endpoints of $\hyperref[def:V and E]{E}[a,b]$ that are vertices in $M$ (resp. the dual map of $M$). Especially, let \textbf{$\hyperref[def:V and E]{V}(t)$} (resp. $\hyperref[def:V and E]{V^*}(t)$) be the endpoint of $\hyperref[def:V and E]{E}(t)$ that is a vertex of $M$ (resp. the dual map of $M$). We define Tutte edge $\hyperref[def:V and E]{E}(0)$ to be the \textbf{root edge} of $(M,G)$, and $o:=\hyperref[def:V and E]{V}(0)$ to be \textbf{root vertex} (See the brown point in Figure \ref{fig:H-C bijection}). \label{def:V and E}
		
		\begin{figure}[htbp]
			\centering
			\vspace{0.1in}        	
			\includegraphics[scale=0.18]{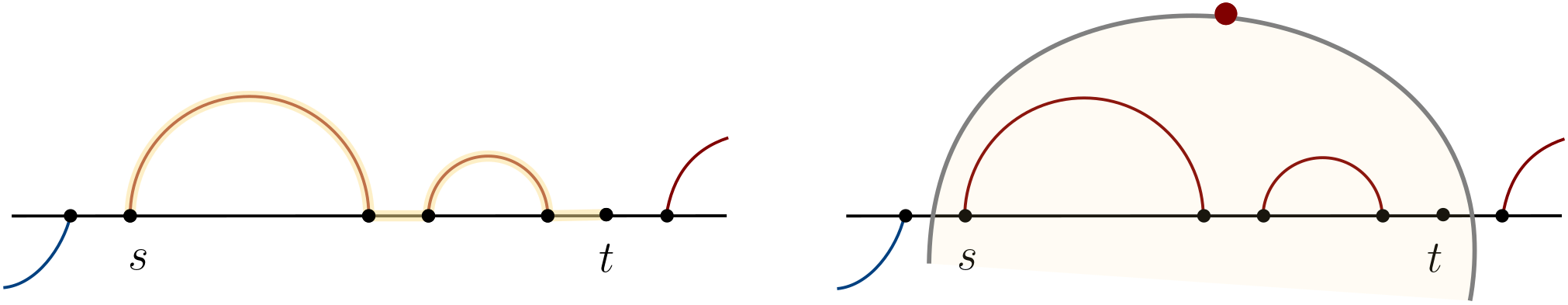}
			\caption{Region encoded by a reducible subword} 
			\label{fig:reducible}
		\end{figure}
		
		\vspace{0.1in}
		\noindent\textbf{Reducible word.} When $X(s,t)$ is reducible, $\hyperref[def:Tri]{\mathrm{Tri}}[s,t]$ forms a bubble-like region in the plane, enclosed by the two Tutte edges $\hyperref[def:V and E]{E}(s)$ and $\hyperref[def:V and E]{E}(t+1)$ (See the outermost two edges in Top Right of Figure \ref{fig:H-C bijection} for an example). The proof is as follows: we first look at the submap $\subset \hyperref[arc map]{\mathbb{A}}$ formed by (1) arcs with endpoints in $[s,t]$ and (2) line segments in the real axis between $[s,t]$. Consider the outermost boundary of this submap in the upper half-plane (see the yellow line on the left of Figure \ref{fig:reducible}). This line, in conjunction with the segments $[s-1,s]$ and $[t,t+1]$, will be boundary edges of a shared face $f$ in $\hyperref[arc map]{\mathbb{A}}$. This is because there are no arcs from $(-\infty,s-1]$ or $[t+1,\infty]$ to some vertices in $[s,t]$, all times in $[s,t]$ are matched to times also in $[s,t]$. See the left of Figure \ref{fig:reducible}. Thus, when taking the dual map $\hyperref[def:Tutte triangulation]{\mathbb{T}}$, both of the two Tutte edges $\hyperref[def:V and E]{E}(s)$ and $\hyperref[def:V and E]{E}(t+1)$ will connect to a common vertex $f^*$ in $\hyperref[def:Tutte triangulation]{\mathbb{T}}$, which is the associated vertex of face $f$. See the right of Figure \ref{fig:reducible}. For a similar reason, $\hyperref[def:V and E]{E}(s)$ and $\hyperref[def:V and E]{E}(t+1)$ will also connect to a common vertex in $\hyperref[def:Tutte triangulation]{\mathbb{T}}$ which is below the real axis.

		Recall in the \hyperref[Step3 of HC bijection]{\textit{final step}} of constructing $(M,G)$, we flip all blue or red edge that corresponds to an $\texttt{F}$ to its dual edge. We have the following:
		\begin{lemma}\label{lem:F-triangle}
			Each pair of endpoints from any edge in $G$ (resp. $G^*$) will always stay in the same blue (resp. red) cluster, regardless of any flip operation performed during the construction process.
		\end{lemma}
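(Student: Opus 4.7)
My plan is to proceed by induction on the number of F-triangle flips performed during the construction, showing that each individual flip preserves the cluster structure in both $G$ and $G^*$.

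For the base case, before any flips are performed, $G = T$ is the canonical spanning tree of $M$ produced by the bijection, and a parallel argument on the blue side shows that $G^* = T^*$ is a spanning tree of the dual map. Both are connected, so any pair of endpoints of any edge trivially lies in the same cluster.

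For the inductive step, suppose the cluster property holds after some number of flips, and consider the next flip at an F-triangle $\mathrm{Tri}(t)$. Assume its colored edge $e$ is currently a primal (red) edge of $G$; the dual case is symmetric. The flip removes $e$ from $G$ and inserts its dual $e^*$ into $G^*$. The central observation is that $e$ runs between $\mathrm{Tri}(s)$ and $\mathrm{Tri}(t)$, where $X_s$ is the burger matched by the F at time $t$, so the subword $X(s,t)$ is reducible. By the bubble analysis immediately preceding the lemma, $\mathrm{Tri}[s,t]$ forms a bubble-like region bounded by the Tutte edges $E(s)$ and $E(t+1)$, whose upper endpoints are identified at a common vertex $f^*$ in $\mathbb{T}$ (and similarly below the axis). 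I would exploit this bubble to route an alternative red path connecting the two primal endpoints of $e$ through $f^*$ without using $e$ itself, and symmetrically verify that inserting $e^*$ into $G^*$ only merges dual clusters that were already consistently connected by the bubble structure, so no spurious disconnection or merger occurs.

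The main obstacle will be making the path-routing argument fully rigorous: it requires a careful case analysis depending on the letters immediately outside the reducible subword $X(s,t)$ and a verification that the alternative paths used to bypass $e$ are not themselves disrupted by the cumulative effect of other F-triangle flips already performed. Once single-flip preservation is established, iterating over all F-triangle flips in any order, and using that the final $(M,G)$ depends only on which flips are performed rather than their order, completes the proof of the lemma.
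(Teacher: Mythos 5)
Your inductive plan rests on a claim that is false, and it omits the argument that the paper actually needs. You propose, for a flip at an $\texttt{F}$-triangle whose diagonal $e$ is red, to ``route an alternative red path connecting the two primal endpoints of $e$ through $f^*$ without using $e$ itself,'' i.e.\ to show that removing $e$ does not disconnect its endpoints. The opposite is true: the paragraph ``\texttt{F} and loops'' immediately after the lemma states that ``before the flip operation, $V(t)$ and $V(t+1)$ are in the same red cluster, and after the operation, they are separated by the blue loop,'' and this is precisely how the number of red clusters goes up by one and a new FK loop is created. Indeed, before any flip the red subgraph is the spanning tree $T$, in which the unique red path between the two endpoints of $e$ is $e$ itself, so no alternative exists; and the vertex you call $f^*$ from the bubble analysis is not a new intermediate point at all but already coincides with one of the two endpoints of $e$ (it is $V(s)=V(t+1)$). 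So the ``no spurious disconnection'' half of your plan is simply not provable, and the lemma does not assert it.

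What the lemma does assert, and what your sketch only gestures at with ``consistently connected by the bubble structure,'' is that the two endpoints $V^*(s)$ and $V^*(t)$ of the \emph{inserted} edge are already joined by a blue path that no flip ever touches. The missing mechanism is the following: the blue path is the one dual to the nested blue arcs of the arc map $\mathbb{A}$ that enclose the vertex $s$ and separate the two faces $V^*(s)$ and $V^*(t)$, and none of these arcs can have an endpoint where the letter is $\texttt{F}$ --- such an $\texttt{F}$-order would, by the LIFO rule, have consumed the hamburger at $s$ before the $\texttt{F}$ at time $t$ could. That LIFO observation is the entire reason the path is stable under all flips, and it is exactly the ``verification that the alternative paths\ldots\ are not themselves disrupted'' that you acknowledge as the ``main obstacle'' but do not supply. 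Once that stable path is exhibited, the conclusion holds simultaneously for every flip, so the induction on the number of flips is unnecessary scaffolding; and one should also record the easy observation, with which the paper opens its proof, that any edge of $G$ or $G^*$ not adjacent to an $\texttt{F}$-triangle already lives in $\mathbb{T}$ and is never altered, so the issue is only about the flipped-in diagonals.
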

        
		\begin{proof}
			If this edge isn't adjacent to an \texttt{F}-triangle, then this edge also exists in $\hyperref[def:Tutte triangulation]{\mathbb{T}}$, thus the endpoints are always connected. 

            \begin{figure}[htbp]
    			\centering
    			\vspace{0.1in}
    			\includegraphics[scale=0.16]{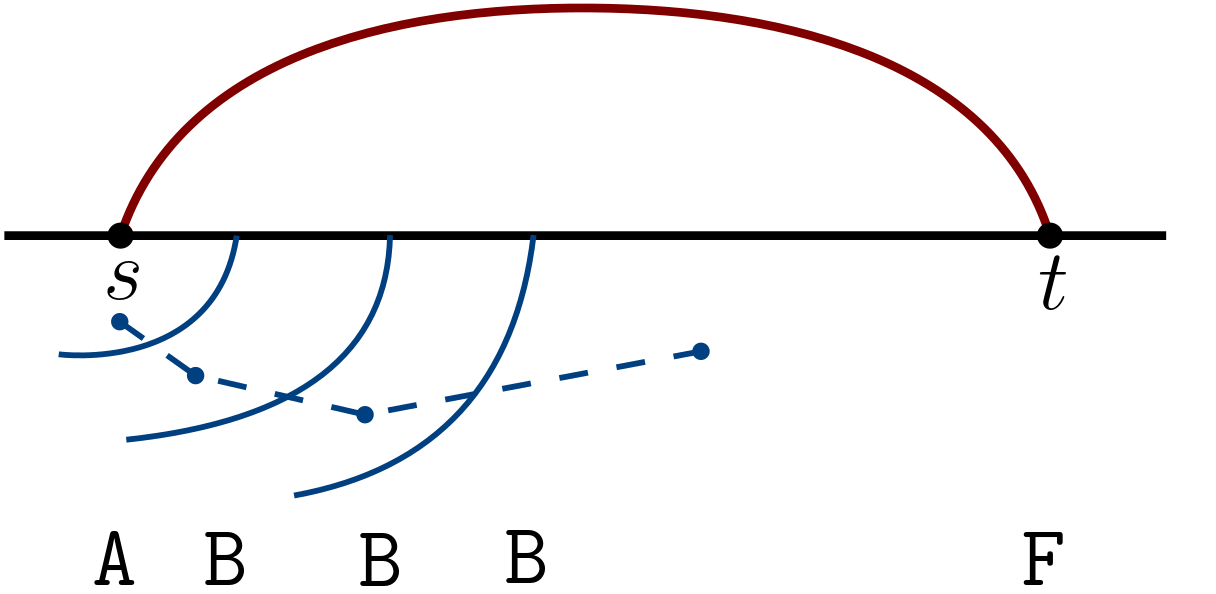}
    			\label{fig:F triangle}
		  \end{figure}
   
            For an $\texttt{F}$-triangle $\hyperref[def:Tri]{\mathrm{Tri}}(t)$, by symmetry, we can assume it is matched to a hamburger order $X(s)$. We claim the blue endpoints $\hyperref[def:V and E]{V^*}(s)$ and $\hyperref[def:V and E]{V^*}(t)$ should always stay in the same blue cluster when we flip the diagonals in the \hyperref[Step3 of HC bijection]{\textit{final step}} of the hamburger-cheeseburger bijection. Consider the blue arcs in arc map \hyperref[arc map]{$\hyperref[arc map]{\mathbb{A}}$}, since $X(s)$ is the freshest burger that is unconsumed before time $t$, all the blue arcs could not cross vertical line $x=t-1$. Then all the blue arcs that separate $s$ and $t$ will be some nested arcs enclosed vertex $s$ in \hyperref[arc map]{$\mathbb{A}$}. Moreover, these arcs could not have an endpoint that corresponds to an $\texttt{F}$, otherwise, this $\texttt{F}$-order should consume the hamburger in $s$ first. 
            
            Therefore, after taking the dual, we get a blue path from $\hyperref[def:V and E]{V^*}(s)$ to $\hyperref[def:V and E]{V^*}(t)$ in $\hyperref[def:Tutte triangulation]{\mathbb{T}}$, and these edges will not change during the flip operations.
		\end{proof}
		
		\noindent\textbf{\texttt{F} and loops.} 
		The $\texttt{F}$ in the word $X$ are in one-to-one correspondence with the FK loops in the map. For an $\texttt{F}$-triangle $\hyperref[def:Tri]{\mathrm{Tri}}(t)$, by symmetry, we can assume it is matched to a hamburger order $X(s)$. From the above paragraph, we see a blue path connecting $\hyperref[def:V and E]{V^*}(s)$ and $\hyperref[def:V and E]{V^*}(t)$. Thus after the flip operation, we have a blue edge connecting $\hyperref[def:V and E]{V^*}(s)$ and $\hyperref[def:V and E]{V^*}(t)$ inside the quadrangle, thus we get a blue loop that separates $\hyperref[def:V and E]{V}(t)$ and $\hyperref[def:V and E]{V}(s)$. See Figure \ref{fig:F and loops} for an illustration. Before the flip operation, $\hyperref[def:V and E]{V}(t)$ and $\hyperref[def:V and E]{V}(t+1)$ are in the same red cluster, and after the operation, they are separated by the blue loop. The remaining part of the red maps is unchanged. Thus we can see the number of red clusters is increased by $1$. The number of blue clusters is unchanged after the flip since $\hyperref[def:V and E]{V^*}(s)$ and $\hyperref[def:V and E]{V^*}(t)$ are already in the same cluster before. Now as the interfaces of blue and red clusters, the number of FK loops is increased by $1$.
		
		\begin{figure}[htbp]
			\centering
			\vspace{0.1in}
			\includegraphics[scale=0.17]{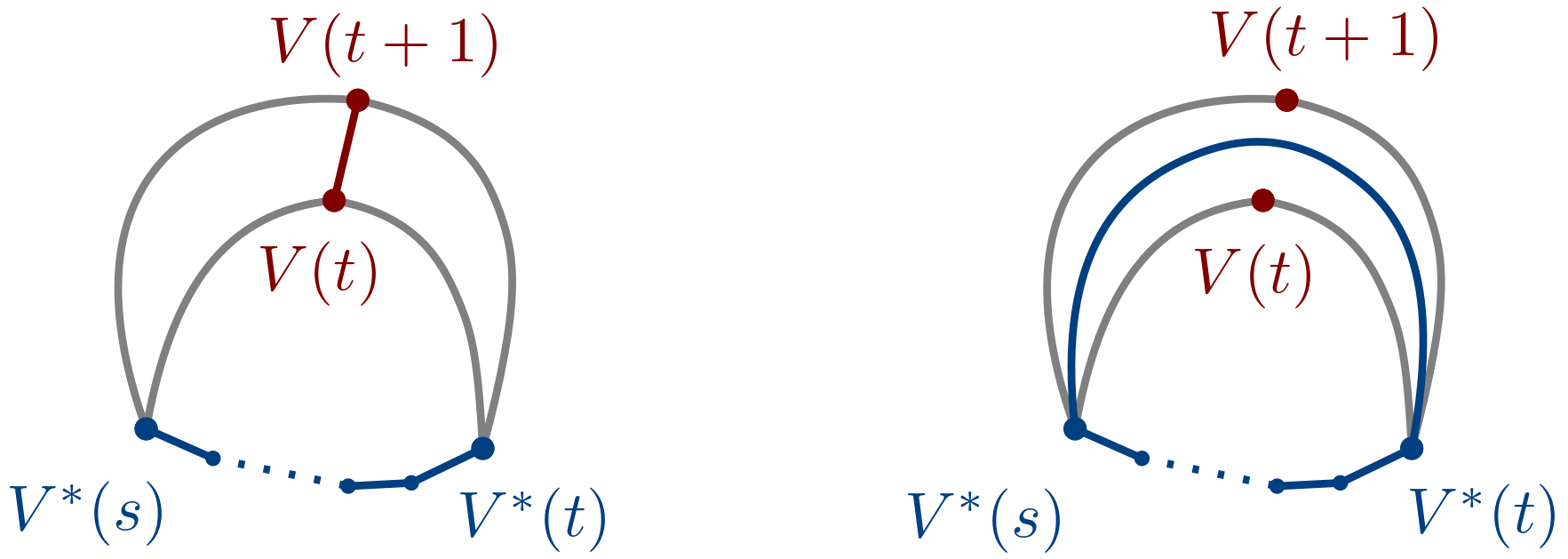}
			\caption{An \texttt{F} corresponds to a loop}
			\label{fig:F and loops}
		\end{figure}
		
		The reader can check the construction described here is consistent with the finite case described in \cite{she16}, where the bijection is proved. In that paper, it is also explained that the decorated, rooted, infinite random planar map $(\M,\G, o)$ corresponding to the random bi-infinite word of law $\P_p$ is an infinite $q$-FK map, with 
		\[
		\sqrt{q}= \frac{2p}{1-p} .
		\]
		In fact, let random letter $X(i)$ of law $\theta_p$, let $\mathbb{P}_p^{(n)}$ be the law of random word $w=(X_1...X_{2n}$ conditioned on the event that it is reducible. Let $w$ be a word of length $n$ which encodes $(\boldsymbol{m}_{n},\boldsymbol{g}_{n})$ via the finite-volume version of the above bijection. Then:
		$$\mathbb{P}_p^{(n)}(w)=\mathbb{P}_p^{(n)}(M_{n}=\boldsymbol{m}_{n}, G_{n}=\boldsymbol{g}_{n}) \propto\left(\frac{1}{4}\right)^{n}\left(\frac{1-p}{4}\right)^{\# \texttt{A}+\# \texttt{B}}\left(\frac{p}{2}\right)^{\# \texttt{F}}
		\propto \sqrt{q}^{\ell\left(\boldsymbol{m}_{n}, \boldsymbol{g}_{n}\right)}$$
		where $\# \texttt{A}$, $\# \texttt{B}$, $\# \texttt{F}$ are the numbers of respective letters in $w$, and $q\in[0,\infty)$ s.t. $2p/(1-p)=\sqrt{q}$. This gives exactly the law of $q$-FK planar map $P_{n,q}$.
		
		Let $(H_n)_{n \in \mathbb{Z}}$ be the net amount of hamburgers started at the origin. That is, we set $H_0=0$, replace all $\texttt{F}$ with the corresponding \texttt{A} or \texttt{B}, then let
		\[
		H_n= ( \# \texttt{a}-\# \texttt{A} \text{ in } [0,n-1] ) \text{ if } n>0,  \quad H_n=  (\# \texttt{A}-\# \texttt{a}\text{ in } [n,-1] ) \text{ if }n<0.
		\]
		Similarly, define the net amount sequence $(C_n)_{n \in \mathbb{Z}}$ of cheeseburgers. They are the contour functions of $\T$ (colored red in Figure \ref{fig:H-C bijection}) and its dual tree (colored blue in Figure \ref{fig:H-C bijection}), except there are some constant steps. We extend $H$ and $C$ to functions from $\mathbb R$ to $\mathbb R$ by linear interpolation.
		
		\begin{theorem}[\cite{she16},Theorem 2.5]\label{conv_to_BM}
			Let $L_t$, $R_t$ be correlated two-sided Brownian motions starting from 0, with covariance matrix
			\[
			\operatorname{Var}\left(L_{t}\right)=\operatorname{Var}\left(R_{t}\right)=\frac{1+\alpha}{4}|t|   ; \quad \operatorname{Cov}\left(L_{t}, R_{t}\right)=\frac{1-\alpha}{4}|t|
			\]
			where $\alpha=\max (1-2p, 0)$. Then, in distribution,
			\[
			(\varepsilon H_{t/\varepsilon^2}, \varepsilon C_{t/\varepsilon^2}), \rightarrow\left(L_{t}, R_{t}\right)
			\text{ as $\varepsilon \to 0$}
			\]
			with respect to the topology of uniform convergence on compact intervals.
		\end{theorem}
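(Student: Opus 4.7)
Proof proposal. The plan is to apply the linear change of coordinates $S_n := H_n + C_n$ and $D_n := H_n - C_n$ to decouple the two walks, prove an invariance principle for $S$ and $D$ separately, and argue they are asymptotically independent.

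First, I would show that $S$ is a simple symmetric random walk: every letter $X_i$ contributes $+1$ to $S$ when $X_i \in \{\texttt{a}, \texttt{b}\}$ and $-1$ when $X_i \in \{\texttt{A}, \texttt{B}, \texttt{F}\}$, and under $\theta_p$ each of these two groups has total probability $1/2$. Donsker's theorem (applied separately on the positive and negative time axes and then combined by independence) then yields $\varepsilon S_{\lfloor t/\varepsilon^2 \rfloor} \Rightarrow B_t$ for a standard two-sided Brownian motion $B$; this matches the target relation $\operatorname{Var}(L_t + R_t) = t$.

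For $D_n$ the analysis is more delicate: at an $\texttt{F}$ step the sign of the increment is determined by the type of the freshest unconsumed burger in the LIFO stack, so the increments are not iid. By the $\theta_p$-symmetry $\texttt{a} \leftrightarrow \texttt{b}$, $\texttt{A} \leftrightarrow \texttt{B}$ the law of $D_n$ is invariant under sign flip, which immediately gives $\mathbb{E}[D_n] = 0$ and $\mathbb{E}[S_n D_n] = 0$. To identify the asymptotic variance I would view $D_n$ as an additive functional of the Markov chain of stack configurations (or the derived ``top-of-stack type'' chain), and invoke a Markov-chain central limit theorem, producing $\operatorname{Var}(D_n)/n \to \alpha = \max(1-2p, 0)$. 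Once $S_n/\sqrt n$ and $D_n/\sqrt n$ are both shown to converge to Gaussian limits and are uncorrelated, they are in fact independent, and reassembling via $H = (S+D)/2$ and $C = (S-D)/2$ yields the claimed joint Brownian limit $(L_t, R_t)$.

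The main obstacle is pinning down the limit variance $\alpha(p)$, and in particular establishing the subdiffusive bound $\operatorname{Var}(D_n) = o(n)$ in the supercritical regime $p > 1/2$ (equivalently $q > 4$) that is of primary interest in this paper. Heuristically, $\texttt{F}$-orders induce a strong enough negative autocorrelation in the increments of $D$ to cancel its variance at leading order, reflecting that the canonical spanning tree $\T(\G)$ and its dual become asymptotically indistinguishable at the Brownian scale. Making this rigorous requires a careful recurrence/mixing analysis of the stack chain and estimates on how $\texttt{F}$-consumption imbalances equilibrate via a Poisson-equation or resolvent argument. Tightness of $(\varepsilon H_{\cdot/\varepsilon^2}, \varepsilon C_{\cdot/\varepsilon^2})$ then follows from Kolmogorov's criterion applied to a second-moment bound $\mathbb{E}[(D_{n+m} - D_n)^2] \le Cm$ that should drop out of the same analysis.
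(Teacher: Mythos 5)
The paper does not reprove this theorem; it cites \cite[Theorem 2.5]{she16}, so your attempt is being compared against Sheffield's argument. Your starting decomposition $S_n := H_n + C_n$, $D_n := H_n - C_n$ is indeed exactly Sheffield's, and your two structural observations are correct: $S$ is a simple symmetric random walk (each of $\{\texttt{a},\texttt{b}\}$ and $\{\texttt{A},\texttt{B},\texttt{F}\}$ has $\theta_p$-mass $1/2$), and the $\texttt{a}\leftrightarrow\texttt{b}$, $\texttt{A}\leftrightarrow\texttt{B}$ symmetry sends $D \mapsto -D$ while fixing $S$, giving $\mathbb{E}[D_n]=0$ and $\mathbb{E}[S_n D_n]=0$. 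You have also correctly located the crux: $\operatorname{Var}(D_n)/n \to \alpha$ with $\alpha=\max(1-2p,0)$, and in particular $o(n)$ variance in the supercritical regime.

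Where the proposal falls short is in how you plan to close that central step, and this is a genuine gap rather than a matter of polish. You propose to treat $D_n$ as an additive functional of the stack-configuration Markov chain (or a ``top-of-stack type'' chain) and invoke a Markov-chain CLT. But the stack chain lives on an uncountable state space of infinite burger sequences and is not obviously geometrically ergodic in any sense that makes Kipnis--Varadhan or Maxwell--Woodroofe applicable; moreover the ``top-of-stack'' reduction is not itself Markov, since after a consumption the new top depends on the whole history. Sheffield does not go this route. His argument is more combinatorial: each $\texttt{F}$-order at time $j$ is paired with its matched burger at time $\phi(j)$, and the two corresponding $D$-increments automatically have opposite signs --- this is the exact cancellation mechanism behind the heuristic you stated --- so the variance of $D_n$ is controlled by counting how many such pairs straddle the interval, which reduces to a moment bound on the length of the reduced word $\overline{X(1,n)}$ (the quantity $\mathbb{E}|E|$ from \cite[Lemma 3.6]{she16}, which the present paper also relies on for Proposition~\ref{finite_expectation}). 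Separately, your concluding step ``both marginally Gaussian and uncorrelated hence independent'' is not valid as stated: one needs joint convergence of the finite-dimensional distributions of $(\varepsilon S_{\cdot/\varepsilon^2}, \varepsilon D_{\cdot/\varepsilon^2})$ to a jointly Gaussian vector, not just two separate one-dimensional CLTs plus vanishing covariance. Sheffield establishes the joint convergence, and without it the reassembly $H=(S+D)/2$, $C=(S-D)/2$ does not deliver the claimed joint Brownian limit.
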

		
		From the theorem above, we can easily see why $p=1/2 \ (q=4)$ is a transition point. When $p>1/2$, $\alpha = 0$ and so $H_t$ and $C_t$ converge to the same two-sided Brownian motion.
		In the following chapters, we only consider the FK map in the $p>1/2$ ($q>4$) case.
		
		\begin{remark}
			Theorem~\ref{conv_to_BM} can be interpreted as a scaling limit statement for infinite FK planar maps in the so-called ``peanosphere sense". See~\cite{DMS14} \cite{GHS_survey} for a discussion of this type of convergence. Our goal is to upgrade this convergence to GHP convergence when $q>4$.
		\end{remark}

	\section{Spanning tree converges to CRT}\label{Chap_Tree_to_CRT}

	\subsection{Local GHP distance}
		We first recall some notations in metric geometry. The reader can also see \cite{local_GHP} for more information. Let $Z$ be a complete and separable metric space endowed with a metric $d$. The \textbf{Hausdorff distance} of two Borel set $A$ and $B$ is given by:
		\[
		d_H(A,B):=\inf\{\varepsilon >0 \mid A \subset B^\varepsilon, B \subset A^\varepsilon\}
		\]
		where for a set $S$, we define $S^\varepsilon :=\{x \in Z \mid d(x,S) < \varepsilon \}$. 
		
		For two finite measures $\mu$ and $\sigma$ on $Z$, the \textbf{Prokhorov distance} between them is defined by:
		$$d_P(\mu,\sigma):= \inf\{\varepsilon >0 \mid 
		\mu(A) \leq \sigma(A^\varepsilon)+\varepsilon, 
		\sigma(A) \leq \mu(A^\varepsilon)+\varepsilon, \text{for any Borel set } A \}.$$
		
		Now we define the Gromov-Hausdorff-Prokhorov (GHP) distance between pointed spaces $(X,d_X,\mu_X,o_X)$ (we will simply say $X$ if no ambiguity), where $X$ is a compact topological space, endowed with a metric $d_X$, a finite measure $\mu_X$, and a root $o_X$. For $(X,d_X,\mu_X,o_X)$ and $(Y,d_Y,\mu_Y,o_Y)$,
		\[
		d_\mathrm{GHP}(X,Y):=\inf_{\phi_X,\phi_Y, Z}
		\{d^Z(\phi_X(o_X),\phi_Y(o_Y)) +
		d^Z_H(\phi_X(X),\phi_Y(Y)) +
		d^Z_P(\phi_{X,*}(\mu_X),\phi_{Y,*}(\mu_Y)\}
		\]
		where $(\phi_X, \phi_Y, Z)$ runs over all isometric embeddings $\phi_X: X \hookrightarrow$ $(Z,d^Z)$ and $\phi_Y: Y \hookrightarrow$ $(Z,d^Z)$. $\phi_{X,*}(\mu_X)$ and $\phi_{Y,*}(\mu_Y)$ are the pushforward measures. Let $\mathbb{K}$ be the set of all such spaces $(X,d_X,\mu_X,o_X)$, viewed modulo isometries which preserve the measure and the marked point. Then ($\mathbb{K},d_\mathrm{GHP}$) is a complete separable metric space \cite{local_GHP}. 
		
		\begin{definition}(Distortion)
			A \textbf{correspondence} $\mathcal{R}$ of two pointed compact metric spaces $(X,d_X,o_X)$ and $(Y,d_Y,o_Y)$ is a subset of $X \times Y$ such that every element of $X$ and every element of $Y$ appears in at least one pair in $\mathcal{R}$ and $(o_X,o_Y) \in \mathcal{R}$. We define the \textbf{distortion} of $\mathcal{R}$ by:
			\[
			\operatorname{dis}(\mathcal{R})=\sup\{ |d_X(x_1,x_2)-d_Y(y_1,y_2)|: (x_1,y_1), (x_2,y_2) \in \mathcal{R}\} .
			\]
		\end{definition}
		
		We say a measure $\nu$ is a \textbf{coupling} of $\mu_X$ and $\mu_Y$ if it is a measure on $X \times Y$ with marginal laws $\mu_X$ and $\mu_Y$. The following equivalent definition of GHP distance will be helpful in later chapters. See e.g. \cite[Proposition 6]{Tessellations} for proof.
		\begin{proposition}[\cite{Tessellations}]\label{distortion}
			For compact metric spaces with finite measures $(X,d_X,\mu_X)$ and $(Y,d_Y,\mu_Y)$:
			$$d_\mathrm{GHP}(X,Y)=\inf_{ \mathcal{R},\nu} \ \frac{1}{2}\operatorname{dis}(\mathcal{R}) + \nu(\mathcal{R}^c)$$
			where $(\mathcal{R}, \nu)$ runs through all pairs consisting of a correspondence of $X$ and $Y$ and a coupling of $\mu_X$ and $\mu_Y$.
		\end{proposition}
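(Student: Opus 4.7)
The plan is to prove the two inequalities separately in the standard metric-geometry style. For the upper bound $d_{\mathrm{GHP}}(X,Y) \leq \tfrac12 \operatorname{dis}(\mathcal{R}) + \nu(\mathcal{R}^c)$, given any $(\mathcal{R},\nu)$ with $\operatorname{dis}(\mathcal{R})=\delta$, I would equip the disjoint union $Z=X\sqcup Y$ with the pseudo-metric that agrees with $d_X$ and $d_Y$ on each factor and, for $x\in X$, $y\in Y$, is defined by
\[
d^Z(x,y):=\inf\bigl\{d_X(x,x')+\tfrac{\delta}{2}+d_Y(y',y):(x',y')\in\mathcal{R}\bigr\}.
\]
The triangle inequality reduces to the bound $|d_X(x'_1,x'_2)-d_Y(y'_1,y'_2)|\le\delta$ guaranteed by the distortion, and the two copies of $\delta/2$ absorb exactly this gap when one breaks a chain through a common element of $X$ or $Y$. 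With the canonical inclusions as isometric embeddings, $(o_X,o_Y)\in\mathcal{R}$ gives $d^Z(o_X,o_Y)\le\delta/2$, and the fact that every element of $X$ (resp.\ $Y$) lies in at least one pair of $\mathcal{R}$ yields $d_H^Z(\phi_X(X),\phi_Y(Y))\le\delta/2$. Pushing $\nu$ forward to a coupling of $\phi_{X,*}\mu_X$ and $\phi_{Y,*}\mu_Y$ on $Z\times Z$, its mass is supported on pairs at $d^Z$-distance at most $\delta/2$ except on a set of measure $\nu(\mathcal{R}^c)$, which by the standard definition gives $d_P^Z\le\max(\delta/2,\nu(\mathcal{R}^c))$.

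For the reverse inequality, fix isometric embeddings of $X$ and $Y$ into some $Z$ that nearly attain the GHP infimum. Choose $r$ slightly larger than the in-$Z$ Hausdorff distance and set
\[
\mathcal{R}:=\{(x,y)\in X\times Y:d^Z(\phi_X(x),\phi_Y(y))\le r\}\cup\{(o_X,o_Y)\}.
\]
Every element of $X$ and $Y$ belongs to some pair by the Hausdorff bound, and two applications of the triangle inequality in $Z$ bound $\operatorname{dis}(\mathcal{R})$ by $2r$. For the coupling, I would invoke Strassen's theorem in the form which supplies a coupling of the pushforward measures on $Z\times Z$ concentrated on $\{d^Z\le\rho\}$ up to mass $\rho$, where $\rho$ is arbitrarily close to $d_P^Z$. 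Pulling this coupling back through the isometric embeddings produces a coupling $\nu$ on $X\times Y$ whose $\mathcal{R}^c$-mass is at most $\rho$. Letting the auxiliary slack parameters tend to zero yields the desired inequality.

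The main subtlety is a bookkeeping one: matching constants so that the three summands $d^Z(\phi_X(o_X),\phi_Y(o_Y))+d_H^Z+d_P^Z$ on the GHP side collapse precisely into the two-term expression $\tfrac{1}{2}\operatorname{dis}(\mathcal{R})+\nu(\mathcal{R}^c)$. The factor $1/2$ is essential and reflects the symmetric gluing in the construction of $d^Z$: any $(x',y')\in\mathcal{R}$ is placed at distance $\delta/2$ in $Z$, so the root-distance and Hausdorff-distance contributions share a single budget of $\delta/2$ rather than adding independently, and the Prokhorov budget combines through a maximum with the escape mass. The computation is parallel to \cite[Proposition 6]{Tessellations}, and beyond this the argument is routine.
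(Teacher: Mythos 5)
The paper itself does not prove this proposition; it defers to \cite[Proposition 6]{Tessellations}, so there is nothing internal to compare your argument against. Your outline reproduces the standard two-sided argument one would expect to find there, and the lower-bound half (build $\mathcal{R}$ from a near-optimal embedding, invoke Strassen for $\nu$) is sound.

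The gap is in the upper-bound half, and it is more than bookkeeping. With your gluing metric on $X\sqcup Y$, every cross-distance is at least $\delta/2$, so in fact $d^Z(\phi_X(o_X),\phi_Y(o_Y))=\delta/2$ and $d^Z_H(\phi_X(X),\phi_Y(Y))=\delta/2$ exactly, while $d^Z_P\le\delta/2\vee\nu(\mathcal{R}^c)$. Under the definition of $d_{\mathrm{GHP}}$ stated just above the proposition --- the infimum of the \emph{sum} of the root-distance, Hausdorff and Prokhorov terms --- these contributions add, giving a bound of $\delta+\bigl(\delta/2\vee\nu(\mathcal{R}^c)\bigr)$, which is strictly larger than $\tfrac{1}{2}\delta+\nu(\mathcal{R}^c)$ whenever $\nu(\mathcal{R}^c)<\delta/2$. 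Your remark that the root and Hausdorff terms ``share a single budget of $\delta/2$'' describes what happens with a $\max$-based GHP distance (the convention in \cite{Tessellations}), but is not a consequence of your construction under the sum-based definition this paper adopts; and no re-embedding rescues the exact equality. Concretely, take $X$ a one-point space carrying a unit Dirac mass and $Y$ a two-point space of diameter $1$ with uniform measure, rooted at one of its points: the only admissible correspondence has distortion $1$ and the unique coupling is supported on it, so the right-hand side equals $1/2$, while in any embedding $d^Z_H\ge 1/2$ and $d^Z_P\ge 1/2$, hence the sum-based $d_{\mathrm{GHP}}(X,Y)\ge 1$. The fix is either to run the whole argument with the $\max$ convention on both sides (then your outline closes cleanly and matches \cite{Tessellations}), or to prove only a two-sided comparison up to a universal constant --- which is all the paper actually uses, since it invokes this proposition only to show that a sequence of GHP distances tends to zero.
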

		
		Since the protagonist in our paper is not a compact space, we have to extend the definition of the GHP distance. We first recall the definition of length space. In $(X,d)$, we define the length of curve $P:[a,b] \to X$ by:
		$$\mathrm{Len}(P):=\sup _{t_0=a<...<t_n=b}\sum d\left(P\left(t_i\right), P\left(t_{i-1}\right)\right).$$
		We say $(X,d)$ is a length space, if for any $x,y \in X$,
		$$d(x,y)=\inf_{P:x\to y}\mathrm{Len}(P).$$

		We consider complete locally compact length spaces $(X,d,\mu,o)$ endowed with a locally finite measure $\mu$ and a marked point (root) $o$. 
		
		\begin{definition}\label{notation:Br}
			We use $X^r$ to represent $r$-ball around the root, i.e. $X^r:=\{x \in X \mid d(x,o) \leq r\}$. 
			$\mu^r$ is the restriction of $\mu$ to $X^r$, i.e. $\mu^r(A)=\mu(A\cap X^r)$. $d^r:=d|_{X^r \times X^r}$ where we regard $d$ as a function $X \times X \to \mathbb{R}_{\geq 0}$. We will use $X^r$ to represent $(X^r,d^r,\mu^r,o)$ if there is no ambiguity.
		\end{definition} 
		
		Since $X^r$ is a compact space with finite measure, we can define the local GHP distance between $X$ and $Y$ by:
		\[
		d_\mathrm{GHP}^\mathrm{loc}(X,Y)=\int_0^\infty e^{-r}(1 \wedge d_\mathrm{GHP}(X^r,Y^r)) dr .
		\]
		Let $\mathbb{L}$ be the space of all complete locally compact length space endowed with a locally finite measure, viewed modulo isometries which preserve the measure and the marked point. $\mathbb{L}$ is checked in \cite{local_GHP} to be a complete separable metric space.
		
		In the following context, we will consider the local GHP distance between the infinite CRT and infinite FK map $M$. It will be easy to see that the infinite CRT belongs to $\mathbb{L}$. To make the planar map $M$ into an element of $\mathbb{L}$, we extend its graph metric to the points inside the edges in an isometric way. To be more specific, for each edge $e \in M$, we assign a homeomorphism $f_e:[0,1] \to e$ then define $d(x,y) := |f_e^{-1}(x) - f_e^{-1}(y)|$. For the distance between $x$ and $y$ in different edges, we set $$d(x,y):=\inf \sum_{i=0}^{n-1} d_M(x_i,x_{i+1})$$
		where the infimum runs over all $n \in \mathbb{Z}_+{\geq 0}, x_0=x,x_n=y,x_i(0<i<n)$ are vertices of $M$. Such an extended map is often called a \textbf{metric graph}. The readers can easily check this metric graph is indeed a complete length space and the new metric is consistent with $d_M$ when restricted to the vertices of $M$. The metric graph of the spanning tree $T \subset M$ is defined as a sub metric graph, that is, $d_T(x,y)$ is the infimum of lengths of paths that only go through edges in $T$. Note each point has a distance smaller than $1$ from its nearest vertex, thus after scaling, it will have no effect on the convergence result. Thus for brevity, when comparing local GHP distance between metric spaces, we will only consider the graph distance between original vertices.

	\subsection{Infinite CRT}\label{chap_crt}
		
		We will now define the infinite version of the CRT, encoded by two-sided Brownian motion started at $0$ from the origin. For a continuous function $g: \mathbb{R} \rightarrow \mathbb{R}$, $g(0)=0$, when $s<t$, let $m_g(s,t)=\inf_{x\in[s,t]} g(x)$. Define $d_g(t,s)=g(s)+g(t)-2m_g(s,t)$ and let $d_g(t,s)=d_g(s,t)$. $d_g$ can be easily checked to be a pseudo metric. We say $s \sim t$ if $d_g(s,t)=0$. Note that this is an equivalence relation. 
		
		Let \textbf{$T_g =\mathbb{R} / \sim$} and let \textbf{$\proj_g$}\label{def:proj} be the projection map $\mathbb{R} \to T_g$. We give $T_g$ the metric induced by $d_g$, the root $\proj_g(0)$, the measure $\lambda_g$ as the pushforward of Lebesgue measure $\lambda$ in $\mathbb{R}$. We call $T_g$ the (infinite) tree encoded by the function $g$. It is easy to check this infinite tree is locally finite. Thus we can talk about the local GHP distance between infinite trees of this type.

		The two-sided Brownian motion $L=(L_t)_{t\in\mathbb R}$ in Theorem \ref{conv_to_BM} is a.s.\ continuous. We can use it to encode an infinite random tree as above, and the resulting quotient space is called the infinite CRT, denoted by $\CRT$\footnote{Since $L$ in Theorem \ref{conv_to_BM} is equal to $1/2$ times standard Brownian motion, $\CRT$ is equal in law to the ordinary CRT with distances scaled by $1/2$}. Let $\dCRT(\proj_L(s),\proj_L(t)):=L_s+L_t-2m_L(s,t)$ denote its metric, $\lebCRT$ denote its measure (the pushforward of Lebesgue measure on $\mathbb R$ under the quotient map), and $\oCRT :=\proj_L(0)$ denote its root. \label{def:CRT}
		
		Recall that the function $(H_t)_{t\in \mathbb R}$ as in Theorem \ref{conv_to_BM} is extended from $\mathbb Z$ to $\mathbb R$ by piecewise linear interpolation, so is continuous. The quotient space $T_H$ is our canonical spanning tree $\T$ (viewed as a metric graph). The reader can check easily that $d_H$ defined above is equal to the (metric) graph distance $\dT$ on $T$. Let $\lebT$ be the pushforward of the Lebesgue measure on $T$. Inspired by the convergence of uniform trees to the CRT\cite{CRT}, we expect the following theorem.
		
		\begin{thm}\label{tree_to_CRT}
			$(\T,\dT/n,\lebT/n^2,o) \rightarrow (\CRT,\dCRT,\lebCRT,\oCRT)$ in distribution, w.r.t local GHP distance. 
		\end{thm}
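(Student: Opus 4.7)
The plan is to deduce convergence from the functional limit $\varepsilon H_{t/\varepsilon^2}\to L_t$ (with $\varepsilon=1/n$) supplied by Theorem~\ref{conv_to_BM}, working on the common probability space provided by Skorokhod's representation theorem, on which the convergence holds almost surely, uniformly on compact intervals of $t$. Since $q>4$ gives $\alpha=0$ and $L=R$, only the single encoding $L$ enters the limit. Local GHP convergence in probability will follow once we establish $d_{\mathrm{GHP}}(\T^r,\CRT^r)\to 0$ in probability for every $r>0$, by bounded convergence applied to the integral $\int_0^\infty e^{-r}(1\wedge d_{\mathrm{GHP}}(\T^r,\CRT^r))\,dr$ that defines the local GHP distance.

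The main technical step is a time-truncated GHP estimate. For $S>0$, let $\T^{\mathrm{tr}}_{S,\varepsilon}:=\proj_H([-S/\varepsilon^2,S/\varepsilon^2])$ and $\CRT^{\mathrm{tr}}_S:=\proj_L([-S,S])$, each equipped with its inherited metric and with the restrictions of $\varepsilon^2\lebT$ and $\lebCRT$ respectively. I would apply Proposition~\ref{distortion} to the correspondence
$$\mathcal{R}_\varepsilon:=\{(\proj_H(t/\varepsilon^2),\proj_L(t)):t\in[-S,S]\}\cup\{(o,\oCRT)\},$$
coupled by the pushforward $\nu_\varepsilon$ of Lebesgue measure on $[-S,S]$ through both coordinate maps. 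A direct expansion of $d_g(s,t)=g(s)+g(t)-2m_g(s,t)$ yields
$$\operatorname{dis}(\mathcal{R}_\varepsilon)\le 4\,\|\varepsilon H_{\cdot/\varepsilon^2}-L\|_{\infty,[-S,S]},$$
which tends to $0$ almost surely, while $\nu_\varepsilon(\mathcal{R}_\varepsilon^c)=0$. Hence $d_{\mathrm{GHP}}(\T^{\mathrm{tr}}_{S,\varepsilon},\CRT^{\mathrm{tr}}_S)\to 0$ almost surely for every fixed $S$.

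To pass from time-truncated subtrees to metric balls, I use the containment: if $g\in C(\mathbb{R})$ with $g(0)=0$ and $\tau^+_g(r):=\inf\{t>0:g(t)\le -r\}$ (with $\tau^-_g(r)$ the symmetric hitting time on the negative axis), then the $r$-ball at $\proj_g(0)$ in $T_g$ is contained in $\proj_g([\tau^-_g(r),\tau^+_g(r)])$. Indeed, combining $d_g(0,t)=g(t)-2m_g(0,t)\le r$ with $m_g(0,t)\le g(t)$ forces $m_g(0,t)\ge -r$, precluding $|t|$ beyond the respective hitting times. Since $L$ is recurrent, $\tau^\pm_L(r)<\infty$ almost surely, and continuity of hitting times at almost-surely regular levels together with uniform convergence yields $\varepsilon^2\tau^\pm_H(rn)\to\tau^\pm_L(r)$ almost surely. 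For each fixed $r$ and $\delta>0$, we can therefore choose deterministic $S=S(r,\delta)$ so that with probability at least $1-\delta$ both the rescaled $r$-ball in $\T$ and the ball $\CRT^r$ lie strictly inside their time-$S$ truncations; restricting $\mathcal{R}_\varepsilon$ to pairs in the product of the two $r$-balls then yields a correspondence whose distortion tends to $0$, giving GHP convergence of the $r$-balls in probability.

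The principal obstacle is this final passage, since $\proj_g^{-1}(B_r(\proj_g(0)))$ need not be a single interval, so a naive restriction of the time-window correspondence may leave points of one $r$-ball unmatched by any element of the other $r$-ball. The remedy is to introduce a buffer via the $(r+\delta)$-ball: by small distortion, each point of the $r$-ball on one side is matched by $\mathcal{R}_\varepsilon$ to a point in the $(r+\delta)$-ball on the other side for $\varepsilon$ small; pairing such boundary points with their nearest partner inside the opposite $r$-ball (at the cost of an additional $O(\delta)$ loss in distortion) yields a valid correspondence whose distortion tends to $0$ as $\varepsilon\to 0$, after which we send $\delta\to 0$ to conclude.
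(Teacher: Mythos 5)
Your proposal follows essentially the same route as the paper: apply Skorokhod's representation theorem to make the functional limit almost sure, bound the distortion of the natural time-indexed correspondence by the uniform norm of the function difference, use hitting times of level $-r$ to control the preimage of the $r$-ball, and then introduce a buffer ball to turn the time-truncated estimate into a genuine ball-to-ball correspondence. This is exactly the paper's sequence Lemma~\ref{uniform distance between T and CRT}, Lemma~\ref{lem:inf-times}, Lemma~\ref{compact T ball}, Proposition~\ref{compare CRT ball and T ball}, Proposition~\ref{discrete tree}.

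However, your final passage addresses only the \emph{correspondence} half of Proposition~\ref{distortion}; the \emph{coupling of measures} on the balls is left unattended, and this is where the actual difficulty sits. The coupling you propose (push Lebesgue on $[-S,S]$ through both projections) has marginals equal to the measures restricted to the images $\proj_L([-S,S])$ and $\proj_H([-S/\varepsilon^2,S/\varepsilon^2])$, \emph{not} to the $r$-balls. After cutting down, the natural candidate is the pushforward of Lebesgue on $J_n:=I_r\cap I_{n,r}$ (where $I_r,I_{n,r}$ are the ball preimages as in~\eqref{eqn:def-intervals}); this coupling has marginals $\proj_{L*}(\lambda|_{J_n})$ and $\scaleproj_*(\lambda|_{J_n})$, and you must still show these are Prokhorov-close to $\lebCRT^r$ and the rescaled $\lebT^r$. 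The discrepancy is controlled by $\lambda(I_{n,r}\setminus I_r)\le\lambda(I_{r+\delta}\setminus I_r)$, and sending $\delta\to 0$ requires knowing that the level set $\{t\in\mathbb{R}\mid \dCRT(\oCRT,\proj_L(t))=r\}$ has zero Lebesgue measure. This is the content of equations~\eqref{claim_2} and~\eqref{claim_3} in the paper's proof, and it is nontrivial — it uses that $\dCRT(\oCRT,\proj_L(t))=L(t)-2\inf_{u\in[0,t]}L(u)$ together with basic properties of Brownian motion. Your write-up does not flag this step, so the proof as stated has a gap in the measure comparison; the correspondence/buffer argument alone does not yield GHP convergence.
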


		The rest of this section is devoted to the proof of Theorem~\ref{tree_to_CRT}. Readers familiar with classical results, such as those presented in \cite{legall}, may question why it is not immediately apparent that the convergence of contour functions implies the convergence of trees. Although the proofs are consistent in spirit, the scenario we are considering here is an infinite version, which not only implies the presence of more intricate details but also suggests that some theorems applicable in finite cases may not hold true. For instance, when we truncate an excursion containing zero from an infinite Brownian Motion, one can easily see that it is not a Brownian excursion (we have indeed fixed the height of the excursion, a condition that does not apply to a Brownian excursion). This implies that when a subtree is truncated near the root of an infinite CRT, its law is not simply the law of a finite CRT. What's more, the function $(H_t)_{t \in \mathbb{R}}$ we consider here is not exactly the contour function of the tree, since it has some constant steps.
		
		Recall from Theorem \ref{conv_to_BM} that $(\frac{1}{n}H_{n^2t})_{t \in \mathbb{R}}\rightarrow (L_t)_{t \in \mathbb{R}}$ in distribution w.r.t. local uniform metric. The space of two-sided continuous functions is separable, by the Skorokhod theorem, there exists a sequence of random functions $(Y_t^{(n)})_{t\in \mathbb{R}}\eqd (\frac{1}{n}H_{n^2t})_{t\in \mathbb{R}}$ $(n \in \mathbb{Z_+})$ with a coupling of $(L_t)$, such that $(Y^{(n)}_t) \rightarrow (L_t)$ uniformly in any compact interval as $n \rightarrow \infty$. Let $\scale{T}\eqd (\T,d_T/n,\lebT/n^2,o)$ be the tree encoded by $\scale{Y}$. For simplicity, let $\scaleproj:=\proj_{\scale{Y}}$ \label{def:scaleproj}be the be the quotient map $\mathbb{R} \to \scale{T}$, the root $\scaleproj(0)$ as $o_n$, the measure $\lambda_n:=\hyperref[def:scaleproj]{\mathcal{P}_{n*}}(\lambda)$ where $\lambda$ is the Lebesgue measure, and
		\[
		d_n(\scaleproj(x),\scaleproj(y)):=d_{\scale{Y}}(\scaleproj(x),\scaleproj(y)) \eqd n^{-1}d_T(\proj_H(n^2 x),\proj_H(n^2 y)).
		\]
		
		\begin{lemma}\label{uniform distance between T and CRT}
			For any $0<r<\infty$, a.s. $\sup_{x,y \in [-r,r]} |\dCRT(\proj_L(x),\proj_L(y))-d_n(\scaleproj(x),\scaleproj(y))| \rightarrow 0$ as $n \rightarrow \infty$.
		\end{lemma}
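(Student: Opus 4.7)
The plan is to deduce the claim from the uniform convergence $Y^{(n)} \to L$ on compact intervals, which is already secured by the Skorokhod coupling set up just before the lemma. Since both $\dCRT$ and $d_n$ have closed-form expressions in terms of $L$ and $Y^{(n)}$ respectively, the argument is a direct estimate rather than anything delicate about quotient spaces.

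Concretely, I will start from the identities
\[
\dCRT(\proj_L(x),\proj_L(y)) = L_x + L_y - 2 m_L(x,y), \qquad d_n(\scaleproj(x),\scaleproj(y)) = Y^{(n)}_x + Y^{(n)}_y - 2 m_{Y^{(n)}}(x,y),
\]
valid for $x \le y$, where $m_g(s,t) := \inf_{u \in [s,t]} g(u)$ as in the construction of $\T_g$. The triangle inequality bounds the difference by $|L_x - Y^{(n)}_x| + |L_y - Y^{(n)}_y| + 2|m_L(x,y) - m_{Y^{(n)}}(x,y)|$. The first two summands are immediately at most $\|L - Y^{(n)}\|_{\infty,[-r,r]}$. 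For the third, I will invoke the elementary Lipschitz bound $|\inf_A f - \inf_A g| \le \sup_A |f - g|$ applied on $A = [x,y] \subset [-r,r]$, which yields the same estimate. Taking the supremum over $x, y \in [-r,r]$ therefore gives $\sup_{x,y \in [-r,r]}|\dCRT(\proj_L(x),\proj_L(y)) - d_n(\scaleproj(x),\scaleproj(y))| \le 4\|L - Y^{(n)}\|_{\infty,[-r,r]}$, and the right-hand side tends to zero almost surely by the Skorokhod coupling.

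There is no serious obstacle internal to this lemma; it is the clean step that transports uniform control of the encoding functions into uniform control of the induced tree pseudometrics over a fixed parameter window. The real difficulties come downstream, when one has to convert this parameter-space convergence into local GHP convergence of $(\scale{T}, d_n, \lambda_n, o_n)$ to $(\CRT, \dCRT, \lebCRT, \oCRT)$: the graph-distance balls of radius $r$ around $o_n$ and $\oCRT$ correspond to different, random parameter intervals under $\scaleproj$ and $\proj_L$, so one must control how much parameter-space one needs to see in order to recover a metric ball, and then also compare the pushforward measures in the Prokhorov sense.
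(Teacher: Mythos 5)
Your proof is correct and is essentially identical to the paper's: both start from the explicit formulas for $\dCRT$ and $d_n$ in terms of the encoding functions and bound the difference by $4\sup_{[-r,r]}|L - Y^{(n)}|$, using the Lipschitz property of infima over a common interval. The paper states the bound directly without spelling out the triangle-inequality decomposition, but the underlying estimate is the same.
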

		\begin{proof}
			By definition, $\dCRT(\proj_L(x),\proj_L(y))=L(x)+L(y)-2m_L(x,y)$ and $d_n(\scaleproj(x),\scaleproj(y))=Y^{(n)}(x)+Y^{(n)}(y)-2m_{Y^{(n)}}(x,y).$
			Thus, since $Y^{(n)}$ converges locally uniformly to $L$,
			$$\sup_{x,y \in [-r,r]} |\dCRT(\proj_L(x),\proj_L(y))-d_n(\scaleproj(x),\scaleproj(y))| \leq 4 \sup_{x \in [-r,r]} |L(x)-\scale{Y}(x)| \rightarrow 0.$$
		\end{proof}
		
		We write $B_r(\scale{T}):=\{x \in \scale{T} \mid d_n(x,o_n) \leq r \}$ for the $r$-ball around the root vertex of $\scale{T}$ and $B_r(\CRT)$ for the $r$-ball around the root vertex of $\CRT$. Let's define
		\begin{align*}
			&\lt_r = \sup\{t<0 \mid L_t<-r\} ,&&\rt_r = \inf\{t>0 \mid L_t<-r\} ,\\
			&\lt_{r}^{(n)} = \sup\{t<0 \mid \scale{Y}_t<-r\} ,  &&\rt_{r}^{(n)} = \inf\{t>0 \mid \scale{Y}_t<-r \}.
		\end{align*}
		We then have $I_r \subset [s_r,t_r]$ and $I_{n,r} \subset [\scale{s_r},\scale{t_r}]$, where 
		\begin{equation} \label{eqn:def-intervals}
			I_r:=\proj_L^{-1}(B_r(\CRT)) \quad \text{and} \quad I_{n,r}:=\scaleproj^{-1}(B_r(\scale{T})) . 
		\end{equation}
		
		\begin{lemma} \label{lem:inf-times}
			Almost surely, for any $\delta>0$, when $n$ is large enough, $[\lt_{r}^{(n)},\rt_{r}^{(n)}] \subset [\lt_r-\delta,\rt_r+\delta]$.
		\end{lemma}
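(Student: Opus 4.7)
The plan is to establish the two one-sided inclusions $t_r^{(n)} \le t_r + \delta$ and $s_r^{(n)} \ge s_r - \delta$ separately for fixed $\delta > 0$. Both will reduce to exhibiting a single time near $t_r$ or $s_r$ at which $L$ strictly lies below $-r$, and then invoking the almost sure uniform convergence $Y^{(n)} \to L$ on compact intervals provided by the Skorokhod coupling. As preliminaries, $s_r$ and $t_r$ are almost surely finite and strictly separated from $0$ (since $L_0 = 0 > -r$ while $\inf_{t \ge 0} L_t = \inf_{t \le 0} L_t = -\infty$ a.s.), and a short continuity argument from the defining $\sup$ and $\inf$ forces $L_{s_r} = L_{t_r} = -r$.

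The descent on the left side is essentially formal. The set $A := \{t < 0 : L_t < -r\}$ is open in $(-\infty, 0)$ by continuity of $L$, so it is a countable union of open intervals; combined with $s_r < 0$, the definition $s_r = \sup A$ forces the right endpoints of these intervals to accumulate at $s_r$ from below, and hence for every $\delta > 0$ some $t^- \in (s_r - \delta, s_r)$ satisfies $L_{t^-} < -r$. The descent on the right side is the only genuinely Brownian-flavoured step and is the main obstacle: at the stopping time $t_r$, the strong Markov property makes $(L_{t_r + u} + r)_{u \ge 0}$ a standard Brownian motion started at $0$, and the classical fact that $0$ is almost surely not a local minimum of standard Brownian motion yields some $t^+ \in (t_r, t_r + \delta)$ with $L_{t^+} < -r$.

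Given $t^\pm$, set $\eta := \tfrac12 \min\{-r - L_{t^-},\, -r - L_{t^+}\} > 0$. By the almost sure uniform convergence of $Y^{(n)}$ to $L$ on the compact interval $[s_r - \delta,\, t_r + \delta]$, for all sufficiently large $n$ we have $Y^{(n)}_{t^-} < -r$ and $Y^{(n)}_{t^+} < -r$. The definitions of $s_r^{(n)}$ and $t_r^{(n)}$ then give $s_r^{(n)} \ge t^- > s_r - \delta$ and $t_r^{(n)} \le t^+ < t_r + \delta$, which is the required inclusion.

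Finally, the almost sure event produced above depends on the fixed $\delta$; to obtain the statement for all $\delta > 0$ simultaneously, I would intersect the full-measure events along a countable sequence $\delta_k \downarrow 0$ and use monotonicity, since the conclusion for $\delta_k$ implies the conclusion for every $\delta \ge \delta_k$. The only truly probabilistic ingredient in the whole argument is the right-side descent, and everything else is a continuity-plus-sup bookkeeping exercise combined with the uniform coupling of $Y^{(n)}$ to $L$.
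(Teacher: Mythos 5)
Your proof is correct and takes essentially the same route as the paper: reduce to finding a time slightly past $t_r$ (and slightly before $s_r$) at which $L$ is strictly below $-r$, then apply the Skorokhod coupling's locally uniform convergence on a compact interval. One minor remark: the strong Markov property and the no-local-minimum fact are not actually needed on the right side, since the openness of $\{t>0 : L_t < -r\}$ together with $L_{t_r} = -r$ (so $t_r$ is not in the set) already forces, by the definition of infimum, a point of that set in $(t_r, t_r+\delta)$ --- exactly mirroring your own topological argument on the left; the only Brownian input on either side is the unboundedness of $L$, which makes $s_r$ and $t_r$ finite, and this symmetry is what the paper's terse ``by symmetry'' is invoking.
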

		
		\begin{proof}
			By a property of Brownian motion, $[\lt_r-\delta,\rt_r+\delta]$ is a.s.\ a compact interval. Thus $\scale{Y}_t \rightarrow L_t$ uniformly in this internal. Consider any subsequential limit $\tau=\lim_{i \rightarrow \infty} \rt_{r}^{(n_i)} \in [0,\infty]$. We know a.s. $\exists u \in [\rt_r,\rt_r+\delta)$ such that $L_{u} < -r$. We have $\lim_{i \rightarrow \infty} Y_{u}^{(n_i)}=L_{u} < -r$, thus a.s. for large enough $n_i$, $\rt_{r}^{(n_i)}\leq u < \rt_r+\delta$ $\Rightarrow \tau \leq \rt_r+\delta$. By symmetry, the same is true for $\lt_{r}^{(n)}$.
		\end{proof}
		
		\begin{lemma}\label{compact T ball}
			Fix $r>0$, a.s. $\exists R<\infty$ which only depends on $L$ and $r$, such that $I_{n,r} \subset [-R,R]$ for large enough $n$.
		\end{lemma}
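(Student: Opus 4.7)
The plan is to reduce the claim to Lemma~\ref{lem:inf-times}. Using $\scale{Y}(0)=0$ and the trivial inequality $\scale{Y}(x)\geq m_{\scale{Y}}(0,x)$, I note the elementary lower bound
\[
d_n(\scaleproj(x),o_n) \;=\; \scale{Y}(x) - 2 m_{\scale{Y}}(0,x) \;\geq\; - m_{\scale{Y}}(0,x), \qquad x\geq 0,
\]
with the symmetric statement for $x\leq 0$. If $x \in I_{n,r}$ then $d_n(\scaleproj(x),o_n)\leq r$, forcing $m_{\scale{Y}}(0,x)\geq -r$; in other words the path of $\scale{Y}$ has not yet dropped below $-r$ between $0$ and $x$. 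This is exactly the containment $I_{n,r}\subset [\lt_r^{(n)}, \rt_r^{(n)}]$ recorded just before the statement of the lemma.

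Having reduced to controlling $[\lt_r^{(n)},\rt_r^{(n)}]$, I would fix any $\delta>0$ (say $\delta=1$) and apply Lemma~\ref{lem:inf-times}, which gives that almost surely, for all $n$ sufficiently large,
\[
I_{n,r} \;\subset\; [\lt_r^{(n)},\rt_r^{(n)}] \;\subset\; [\lt_r-1,\,\rt_r+1].
\]
Setting $R := \max(|\lt_r|,|\rt_r|)+1$ then yields $I_{n,r}\subset [-R,R]$, and $R$ depends only on $L$ and $r$ as required. That $R$ is a.s.\ finite follows from the fact that the two-sided Brownian motion $L$ is a.s.\ unbounded below on both $(-\infty,0]$ and $[0,\infty)$, so $\lt_r$ and $\rt_r$ are a.s.\ finite.

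I do not anticipate any genuine obstacle: the lemma is essentially a repackaging of Lemma~\ref{lem:inf-times} together with the elementary tree-distance estimate above. The only mild subtlety is that the bound must be expressed in terms of the limiting Brownian motion $L$ rather than in terms of the prelimits $\scale{Y}$, which is precisely why one first passes through the hitting-time containment $I_{n,r}\subset[\lt_r^{(n)},\rt_r^{(n)}]$ before invoking the uniform control supplied by Lemma~\ref{lem:inf-times}.
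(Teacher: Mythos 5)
Your proof is correct and follows essentially the same route as the paper's: reduce to the containment $I_{n,r}\subset[\lt_r^{(n)},\rt_r^{(n)}]$ (which the paper records just above \eqref{eqn:def-intervals}) and then apply Lemma~\ref{lem:inf-times} together with the a.s.\ finiteness of $\lt_r,\rt_r$. You simply spell out two details the paper leaves implicit — the elementary tree-distance bound giving the containment, and why $\lt_r,\rt_r$ are finite — but the argument is the same.
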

		
		\begin{proof}
			By the above proposition, when $n$ large enough, $ [\scale{\lt}_{r},\scale{\rt}_{r}] \subset [\lt_{r}-\delta,\rt_{r}+\delta]$, which is a.s. compact so contained in $[-R,R]$ for some $R$ $\Rightarrow I_{n,r} \subset [\lt_{r}^{(n)},\rt_{r}^{(n)}] \subset [-R,R]$.
		\end{proof}

		\begin{proposition}\label{compare CRT ball and T ball}
			Fix $r>0$. $\forall \varepsilon>0$, a.s. when $n$ large enough, $I_r \subset I_{n,r+\varepsilon}$ and $I_{n,r} \subset I_{r+\varepsilon}$.
		\end{proposition}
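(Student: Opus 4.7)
The plan is to deduce both inclusions directly from the uniform convergence result in Lemma~\ref{uniform distance between T and CRT}, once we have confined both $I_r$ and $I_{n,r}$ to a common compact interval.

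First I would reduce the problem to a compact window. Recall that $I_r \subset [\lt_r, \rt_r]$ by the remark immediately preceding Lemma~\ref{lem:inf-times}, and that $[\lt_r,\rt_r]$ is a.s.\ a compact interval of $\mathbb{R}$ by a standard property of Brownian motion. Combining this with Lemma~\ref{compact T ball}, we can a.s.\ pick $R = R(r, L) < \infty$ such that $I_r \subset [-R,R]$ and, for all $n$ large enough, $I_{n,r} \subset [-R,R]$ as well.

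Next I would invoke Lemma~\ref{uniform distance between T and CRT} on the compact interval $[-R,R]$ applied with $x = 0$: a.s.,
\[
\sup_{t \in [-R,R]} \bigl| d_n(\scaleproj(0),\scaleproj(t)) - \dCRT(\proj_L(0),\proj_L(t)) \bigr| \longrightarrow 0 \quad \text{as } n\to\infty .
\]
In particular, a.s.\ for $n$ sufficiently large this supremum is less than $\varepsilon$. For the first inclusion, take any $t \in I_r$; then $t \in [-R,R]$ and $\dCRT(\proj_L(0),\proj_L(t)) \leq r$, so $d_n(\scaleproj(0),\scaleproj(t)) \leq r + \varepsilon$, which means $t \in I_{n,r+\varepsilon}$. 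For the second inclusion, take any $t \in I_{n,r}$; for large $n$ we have $t \in [-R,R]$ and $d_n(\scaleproj(0),\scaleproj(t)) \leq r$, so $\dCRT(\proj_L(0),\proj_L(t)) \leq r + \varepsilon$, i.e., $t \in I_{r+\varepsilon}$.

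The only substantive point is the uniformity of the constant $R$ across the two families $\{I_r\}$ and $\{I_{n,r}\}_{n}$, and this is already packaged in Lemma~\ref{compact T ball} via Lemma~\ref{lem:inf-times}. Once this is in hand, the remainder is a routine translation from uniform convergence of distances to inclusion of sublevel sets, with the extra slack $\varepsilon$ absorbing the distance approximation error. No delicate estimate or new probabilistic input is required.
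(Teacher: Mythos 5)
Your proposal follows essentially the same route as the paper: confine $I_r$ and $I_{n,r}$ to a common compact window $[-R,R]$ via Lemma~\ref{compact T ball} (plus compactness of the CRT ball), then apply the uniform convergence of the distance-to-root functions from Lemma~\ref{uniform distance between T and CRT} on $[-R,R]$ to transfer sublevel-set membership with $\varepsilon$ of slack. The argument is correct and matches the paper's proof.
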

		
		\begin{proof}
			We know $B_r(\CRT)$ is a.s. compact. By this and Lemma \ref{compact T ball}, a.s.\ there exists $R>0$ such that $I_r \cup I_{n,r} \subset [-R,R]$ for large enough $n$. By Lemma \ref{uniform distance between T and CRT},
			$$\sup_{x \in [-R,R]} |\dCRT(\oCRT,{\proj_L}(x))-d_n(o_n,\scaleproj(x))| \rightarrow 0.$$
			Thus $ \forall \varepsilon >0$, when $n$ is large enough, this amount will be controlled by $\varepsilon$. Thus $x \in I_{n,r}$ implies $\dCRT(\oCRT,{\proj_L}(x)) \leq r+\varepsilon$. Hence $x \in I_{r+\varepsilon}$. Similarly $I_r \subset I_{n,r+\varepsilon}$.
		\end{proof}
		
		To estimate the local GHP distance, we first compare the GHP distance between balls. Recall notations in Definition \ref{notation:Br}: Let $\lambda$ be Lebesgue measure. For $r>0$, we endow $B_r(\scale{T})$ with restricted measure $\lambda_n^r(\cdot):=\lambda(\scaleproj^{-1}(\cdot \cap B_r(\scale{T}))=\lambda(\scaleproj^{-1}(\cdot) \cap I_{n,r})$, and metric $d_n^r$ as restriction of metric $d_n$ in set $B_r(\scale{T}) \times B_r(\scale{T})$. Similarly, we endow $B_r(\CRT)$ with restricted measure $\lebCRT^r(\cdot):=\lambda(\proj_L^{-1}(\cdot) \cap I_{r})$ and metric $\dCRT^r$ as restriction of metric $\dCRT$ in set $B_r(\CRT) \times B_r(\CRT)$.
		
		\begin{proposition}\label{discrete tree}
			For any fixed $r>0$, as $n\rightarrow \infty$, $d_{\mathrm{GHP}}(B_r(\scale{T}),B_r(\CRT)) \to 0$ a.s.
		\end{proposition}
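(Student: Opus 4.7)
The plan is to bound $d_{\mathrm{GHP}}(B_r(\scale{T}),B_r(\CRT))$ via the correspondence–coupling formulation of Proposition \ref{distortion}. I work under the Skorokhod coupling already in force (so $\scale{Y}\to L$ locally uniformly a.s.) and fix $\eta>0$. The natural candidate correspondence sends a time $t\in\mathbb{R}$ to the matched pair $(\scaleproj(t),\proj_L(t))$, and the coupling arises from pushing Lebesgue measure on $\mathbb{R}$ through this map. Combining Lemma \ref{compact T ball} with Proposition \ref{compare CRT ball and T ball} (the latter also applied with $r$ replaced by $r-\varepsilon$) yields almost surely a deterministic $R<\infty$ and, for any preselected $\varepsilon>0$, the sandwich $I_{r-\varepsilon}\subset I_{n,r}\subset I_{r+\varepsilon}\subset[-R,R]$ for all large $n$. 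Lemma \ref{uniform distance between T and CRT} then gives, for large $n$,
\[
\sup_{s,t\in[-R,R]}\bigl|d_n(\scaleproj(s),\scaleproj(t))-\dCRT(\proj_L(s),\proj_L(t))\bigr|<\eta.
\]

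Since $\scale{T}$ and $\CRT$ are $\mathbb{R}$-trees, any point of the $(r+\varepsilon)$-ball may be projected radially along its unique geodesic to the root onto the closed $r$-ball, moving by at most $\varepsilon$. Place into $\mathcal{R}$ the pair $(\scaleproj(t),\widehat y(t))$ for every $t\in I_{n,r}$ and $(\widehat x(t),\proj_L(t))$ for every $t\in I_r$, where $\widehat y(t)\in B_r(\CRT)$ and $\widehat x(t)\in B_r(\scale{T})$ denote these radial projections (and coincide with $\proj_L(t)$, respectively $\scaleproj(t)$, whenever those already lie in the closed $r$-ball). Every point of $B_r(\scale{T})$ and $B_r(\CRT)$ is then represented in $\mathcal{R}$, and a short triangle-inequality calculation combining the display above with the $\varepsilon$-bound on projections gives $\operatorname{dis}(\mathcal{R})\leq\eta+2\varepsilon$. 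Define $\nu$ by pushing Lebesgue measure on $I_{n,r}\cap I_r$ through $t\mapsto(\scaleproj(t),\proj_L(t))$ and extending along the projection pairs on $I_{n,r}\setminus I_r$ and $I_r\setminus I_{n,r}$; every pair of $\nu$ lies in $\mathcal{R}$, while the residual discrepancy between the marginals of $\nu$ and $(\lambda_n^r,\lebCRT^r)$ has total mass at most $\lambda(I_{n,r}\triangle I_r)$, which by the sandwich is bounded by $\lambda(I_{r+\varepsilon}\setminus I_{r-\varepsilon})$. This mismatch contributes at most its own size to $d_{\mathrm{GHP}}$ (for instance after adjoining a fictitious atom at a far point to equalize total mass).

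The remaining and main technical point is to show $\lambda(I_{r+\varepsilon}\setminus I_{r-\varepsilon})\downarrow 0$ as $\varepsilon\downarrow 0$ almost surely, equivalently $\lambda(\partial I_r)=0$ for $\partial I_r=\{t\in\mathbb{R}:\dCRT(\oCRT,\proj_L(t))=r\}$. Since $\dCRT(\oCRT,\proj_L(t))=L_t-2m_L(0\wedge t,0\vee t)$, by Fubini it suffices to verify that for each fixed $t\neq 0$ the random variable $L_t-2m_L(0\wedge t,0\vee t)$ has no atom at $r$; this follows from the reflection principle, which furnishes a joint density for $(L_t,\inf_{[0\wedge t,0\vee t]}L)$, hence a continuous law for the functional. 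Once $\lambda(\partial I_r)=0$ is in hand, Proposition \ref{distortion} yields $d_{\mathrm{GHP}}(B_r(\scale{T}),B_r(\CRT))\leq\eta+2\varepsilon+o(1)$ as $n\to\infty$, and since $\eta,\varepsilon>0$ are arbitrary the conclusion follows. The main obstacle is thus this Fubini step controlling the boundary of $I_r$; the correspondence construction itself is routine once the $\mathbb{R}$-tree structure is exploited.
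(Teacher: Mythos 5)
Your proposal is correct and follows essentially the same route as the paper: build a correspondence by pairing $\scaleproj(t)$ with $\proj_L(t)$ for times in the overlap of the preimage-intervals, handle the excess by projecting onto the nearest point of the ball (which in an $\mathbb{R}$-tree is exactly the radial projection you use), couple the measures by pushing forward Lebesgue measure, and reduce the measure-discrepancy term to the claim that $\lambda(I_{r+\varepsilon}\setminus I_{r-\varepsilon})\to 0$ as $\varepsilon\downarrow 0$, i.e.\ that the sphere $\{t:\dCRT(\oCRT,\proj_L(t))=r\}$ is Lebesgue-null. Your Fubini/reflection-principle argument for that null-set claim is a correct fleshing-out of what the paper only asserts as ``easily from basic properties of Brownian motion.''

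The one place where your write-up is looser than the paper's is the treatment of the coupling: Proposition~\ref{distortion} requires a genuine coupling of $\lambda_n^r$ and $\lebCRT^r$, whereas the $\nu$ you construct has marginals that strictly dominate these (you add mass on the symmetric difference), and the ``fictitious atom at a far point'' fix is awkward --- a far atom needs a correspondent, which would blow up the distortion. The paper avoids this cleanly by a three-step triangle inequality: first it compares the two balls equipped with the \emph{common} measure $\proj_{*}(\lambda\vert_{J_n})$ on each side (there the pushforward of $\lambda\vert_{J_n}$ is a genuine coupling with $\nu_n(\mathcal{R}^c)=0$), and then separately bounds the Prokhorov distance on each ball between the $J_n$-measure and the intended measure $\lambda_n^r$ resp.\ $\lebCRT^r$, using exactly the $\lambda(I_{r+\varepsilon}\setminus I_r)\to 0$ fact. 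Replacing your fictitious-atom remark with that two extra Prokhorov comparisons would make the argument fully rigorous; everything else is sound.
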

		
		\begin{proof}
			Let $I_r$ and $I_{n,r}$ be as in~\eqref{eqn:def-intervals} and let $J_n=I_r \cap I_{n,r}$. We first show that as $n \to \infty$,
			\begin{equation}\label{claim_in_GHPdistance_betweeen_balls}
				d_\mathrm{GHP}\left((B_r(\scale{T}),d_n,\hyperref[def:scaleproj]{\mathcal{P}_{n*}}(\lambda \vert_{J_n}),o_n),\ \left(B_r(\CRT),\dCRT,\proj_{L*}(\lambda \vert_{J_n}),\oCRT\right)\right) \to 0.
			\end{equation}
			
			To use Lemma \ref{distortion}, we define a correspondence $\mathcal{R}_{n,r}$ between $B_r(\scale{T})$ and $B_r(\CRT)$:
			
			\begin{enumerate}[$\quad$ 1)]
				\item For $t \in I_r \cap I_{n,r}$, let $\proj_L(t)$ correspond to $\scaleproj(t)$.
				\item For $t \in I_r \setminus I_{n,r}$, let $\proj_L(t)$ correspond to $x \in B_r(\scale{T})$ such that $d_n(x,\scaleproj(t))$ is minimal. 
				\item For $t \in I_{n,r} \setminus I_r$, let $\scaleproj(t)$ correspond to $x \in B_r(\CRT)$ such that $\dCRT(x,\proj_L(t))$ is minimal. 
			\end{enumerate}   
			
			For any $\varepsilon>0$, by Lemma \ref{compare CRT ball and T ball}, we can let $n$ large enough such that $I_r \subset I_{n,r+\varepsilon}$ and $I_{n,r} \subset I_{r+\varepsilon}$. Thus via triangle inequality, we have
			\begin{align*}\label{equation_distortion}
				\operatorname{dis}(\mathcal{R}_{n,r}) \leq \sup_{x,y\in I_r \cup I_{n,r}} |\dCRT(\proj_L(x),\proj_L(y))-d_n(\scaleproj(x),\scaleproj(y))|&
                \\+\sup_{t\in I_r \setminus I_{n,r}}d_n(B_r(\scale{T}),\scaleproj(t)) &+\sup_{t\in I_{n,r} \setminus I_r}\dCRT(B_r(\CRT),\proj_L(t)).
			\end{align*}
			By Lemma \ref{uniform distance between T and CRT} and Lemma \ref{compact T ball}, the first item on the right side converges to zero. Since $I_r \subset I_{n,r+\varepsilon}$, we have for any $t \in I_r$, $d_n(B_r(\scale{T}),\scaleproj(t))\leq \varepsilon$. Similarly, $I_{n,r} \subset I_{r+\varepsilon} \Rightarrow \dCRT(B_r(\CRT),\proj_L(t)) \leq \varepsilon$ for any $t \in I_{n,r}$. Let $\varepsilon \to 0$, we have $\operatorname{dis}(\mathcal{R}_{n,r}) \to 0$ as $n \to \infty$.
			
			To compare the measures, we define a coupling measure $\nu_n$ of $\hyperref[def:scaleproj]{\mathcal{P}_{n*}}(\lambda \vert_{J_n})$ and $\proj_{L*}(\lambda \vert_{J_n})$ such that $\nu_n(A \times B)=\lambda( \proj_L^{-1}(A)\cap \scaleproj^{-1}(B) \cap J_n)$. One can easily check the existence of $\nu_n$ and it is with marginal laws $\hyperref[def:scaleproj]{\mathcal{P}_{n*}}(\lambda \vert_{J_n})$ and $\proj_{L*}(\lambda \vert_{J_n})$. Note $\nu_n(\mathcal{R}_{n,r}) \geq \nu_n(\proj_L(J_n) \times \proj_L(J_n))= \lambda (J_n)$, we have $\nu_n(\mathcal{R}_{n,r}^c)=0$. Now by Lemma \ref{distortion}, as $n \to \infty$, LHS of (\ref{claim_in_GHPdistance_betweeen_balls}) $\leq \frac{1}{2}\operatorname{dis}(\mathcal{R}_{n,r}) + \nu(\mathcal{R}_{n,r}^c)=\frac{1}{2}\operatorname{dis}(\mathcal{R}_{n,r}) \to 0.$
			
			Then we continue to prove as $n \to \infty$,
			\begin{equation}\label{claim_2}			
            d_\mathrm{GHP}\left((B_r(\scale{T}),d_n,\hyperref[def:scaleproj]{\mathcal{P}_{n*}}(\lambda \vert_{J_n}),o),\ (B_r(\scale{T}),d_n,\hyperref[def:scaleproj]{\mathcal{P}_{n*}}(\lambda \vert_{I_{n,r}}),o)\right) \to 0.
			\end{equation}
			Note they have the same metrics, so the GHP distance is just the Prokhorov distance of measures. For any $\varepsilon >0$, when $n$ is large enough, $I_{n,r} \subset I_{r+\varepsilon}$. We have:
			$$d_\mathrm{P}\left(\hyperref[def:scaleproj]{\mathcal{P}_{n*}}(\lambda \vert_{J_n}),\hyperref[def:scaleproj]{\mathcal{P}_{n*}}(\lambda \vert_{I_{n,r}})\right) \leq \lambda(I_{n,r} \setminus I_{r}) \leq \lambda(I_{r+\varepsilon}\setminus I_r).$$
			We claim that for any fixed $r > 0$, we have $\lambda(I_{r+\delta}\setminus I_r) \rightarrow 0$ as $\delta \rightarrow 0$. In fact, this is equivalent to the statement that the Lebesgue measure of set $\{t \in \mathbb{R} \mid \dCRT(o,\proj_L(t))=r\}$ is zero. Since $\dCRT(o,\proj_L(t))=L(t)-2\inf_{u\in[0,t]}L(u)$, the claim can be inferred easily from basic properties of Brownian motion. Letting $\varepsilon \to 0$, we prove (\ref{claim_2}). 
			
			We can similarly prove as $n \to \infty$,
			\begin{equation}\label{claim_3}
				d_\mathrm{GHP}\left((B_r(\CRT),\dCRT,\proj_{L*}(\lambda \vert_{J_n}),\oCRT),\ (B_r(\CRT),\dCRT,\proj_{L*}(\lambda \vert_{I_r}),\oCRT)\right) \to 0.
			\end{equation}
			Combine (\ref{claim_in_GHPdistance_betweeen_balls}), (\ref{claim_2}), (\ref{claim_3}), we prove the proposition.
		\end{proof}
		
		\begin{proof}[Proof of Theorem \ref{tree_to_CRT}]
			Note $e^{-r}(1 \wedge d_\mathrm{GHP}(B_r(\scale{T}) \, B_r(\CRT))) \leq e^{-r}$, and $e^{-r}$ is integrable. By the dominated convergence theorem, we have $$\lim_{n\rightarrow \infty} \int_0^{\infty} e^{-r}(1 \wedge d_\mathrm{GHP}(B_r(\scale{T}) \, B_r(\CRT))) dr=\int_0^{\infty} \lim_{n\rightarrow \infty} e^{-r}(1 \wedge d_\mathrm{GHP}(B_r(\scale{T}) \, B_r(\CRT))) dr=0.$$
		\end{proof}
		
		Consider a measure $\mu$ on $M$ which assigns each vertex a mass equal to its degree in $M$. In the subsequent chapters, we will give $T$ the measure $\mu=\mu \vert_T$. Recall the way we construct $T$ from the hamburger walk process $H$, we assert that $\mu \vert_T$ is exactly the pushforward measure of the counting measure $\mu_\mathbb{Z}$ on $\mathbb{Z}$.

        To provide a more intuitive explanation, we introduce another construction of the planar map $(M,T)$ known as the (discrete) mating of trees. First, put the burger processes $H$ (turn upside down) and $C$ in the plane as shown on the left of Figure \ref{fig:counting_measure}. Then draw disjoint grey lines connecting $H_t$ and $C_t$ for each $t\in \mathbb{Z}$. Subsequently, we glue points in $H$ (respectively, $C$) if they reside on the same level line and the segment of the level line between them does not cross $H$ (respectively, $C$). See an example on the left of Figure \ref{fig:counting_measure}, where the level of $H$ is drawn orange. 
        
        The reader can verify that this method of gluing continuous functions gives the same map as the method described at the start of this chapter. Moreover, we get a triangulation, as depicted on the right of Figure \ref{fig:counting_measure}, and it is exactly the Tutte triangulation of $(M,T)$, where the grey edges are exactly the Tutte edges. From this perspective, for a vertex $x \in M$, $\mu(x)=\operatorname{deg}_M(x)$ is not only equal to the count of Tutte edges with $x$ as the endpoint but also equal to the count of points $t \in \mathbb{Z}$ such that $\proj_H(t)=x$. 
		
		\begin{figure}[htbp]
			\centering
            \vspace{0.1in}
			\includegraphics[scale=0.22]{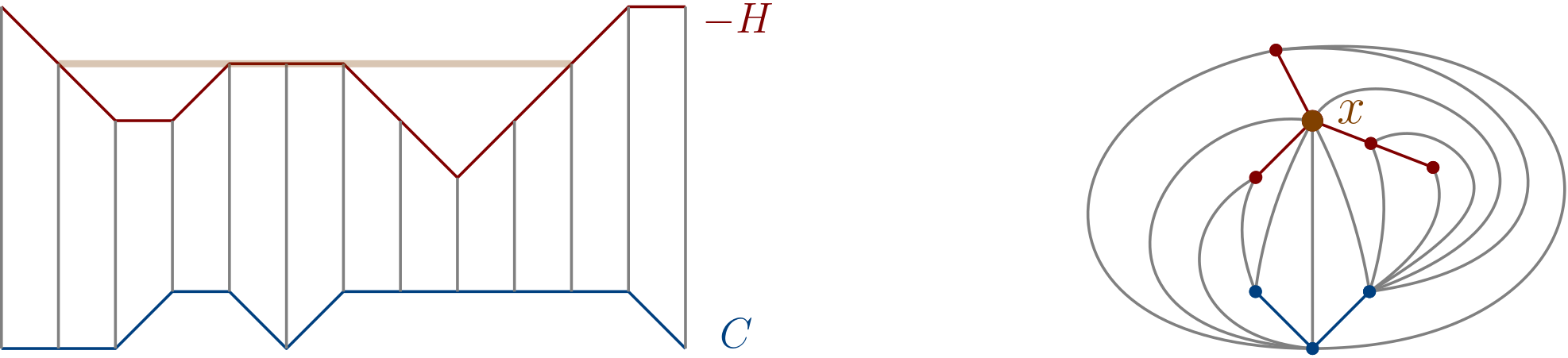}
			\caption{Discrete mating of trees}
			\label{fig:counting_measure}
		\end{figure}
        \vspace{0.1in}
        
        Now, let's compare Prokhorov distance of pushforward measures $\lebT:=\proj_{H*}(\lambda)$ and $\mu=\proj_{H*}(\mu_\mathbb{Z})$ in metric space $(T,\varepsilon\dT)$. For any Borel set $I \subset \mathbb{R}$, $I^\delta:=\{ x \mid \exists \ y \in I, \ |x-y| \leq \delta \}$. For any set $A \subset T$, $A^\delta:=\{ x \in T \mid \exists \ y \in A, \ \varepsilon\dT(x,y) \leq \delta \}$. Now, for any set $A$ in $T$, let $I=\proj_H^{-1}(A)$. By the definition of $\dT$ we know $\proj_H(I^1) \subset A^\varepsilon$. Thus,
        $$\lebT(A^\varepsilon) = \lambda \left(\proj_H^{-1}(A^\varepsilon)\right) \geq \lambda(I^1) \geq \sum_{x \in \mathbb{Z}, [x,x+1) \subset I^1} \lambda\left([x,x+1)\right) \geq \mu_{\mathbb{Z}}(I)=\mu(A),$$
        $$\mu(A^\varepsilon)= \mu_{\mathbb{Z}} (\proj_H^{-1}(A^\varepsilon)) \geq  \mu_{\mathbb{Z}}(I^1) \geq \# \{x \in \mathbb{Z} \mid \exists \ t \in [x,x+1), t \in I\} \geq \lambda(I)=\lebT(A).$$
		Now we get the Prokohorov distance of $\varepsilon^2\lebT$ and $\varepsilon^2\mu$ is smaller than $\varepsilon$. From the above discussion and Theorem~\ref{tree_to_CRT} we get the following theorem.
		\begin{theorem}\label{main_chap3}
			$(T, \varepsilon\dT, \varepsilon^2\mu,o) \rightarrow (\CRT,\dCRT,\lebCRT,\oCRT)$ in distribution w.r.t. local GHP distance.
		\end{theorem}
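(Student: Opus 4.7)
The plan is to deduce Theorem~\ref{main_chap3} from Theorem~\ref{tree_to_CRT} by controlling the effect of replacing $\lebT$ with $\mu$ on the measure component of the GHP distance. The preceding discrete mating of trees discussion has essentially done the heart of the work: it established that on $T$ equipped with the rescaled metric $\varepsilon\dT$, the Prokhorov distance between $\varepsilon^2\lebT$ and $\varepsilon^2\mu$ is at most $\varepsilon$, via the explicit comparison $\lebT(A^\varepsilon)\geq \mu(A)$ and $\mu(A^\varepsilon)\geq\lebT(A)$ for Borel $A\subset T$.

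First I would upgrade this to a bound on the GHP distance between the $r$-balls. Fix $r>0$ and let $X_\varepsilon^r$ and $Y_\varepsilon^r$ denote the $r$-balls around $o$ in $(T,\varepsilon\dT,\varepsilon^2\lebT,o)$ and $(T,\varepsilon\dT,\varepsilon^2\mu,o)$ respectively. Since the underlying metric space and root are identical, taking the identity correspondence $\mathcal{R}=\{(x,x): x\in X_\varepsilon^r\cup Y_\varepsilon^r\}$ in Proposition~\ref{distortion} gives zero distortion, so $d_{\mathrm{GHP}}(X_\varepsilon^r,Y_\varepsilon^r)$ is controlled purely by the Prokhorov distance between the restricted measures, which inherits the same $O(\varepsilon)$ bound (up to the small boundary mass $\varepsilon^2\lebT(B_{r+\varepsilon}\setminus B_r)$, handled exactly as in the proof of Proposition~\ref{discrete tree} via the fact that $L$ a.s.\ spends zero Lebesgue time at any fixed level).

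Next I would integrate against $e^{-r}dr$: since $d_{\mathrm{GHP}}(X_\varepsilon^r,Y_\varepsilon^r)\to 0$ uniformly in $r$ on any compact interval and is bounded by $1$, the dominated convergence theorem gives
\[
d_{\mathrm{GHP}}^{\mathrm{loc}}\bigl((T,\varepsilon\dT,\varepsilon^2\lebT,o),(T,\varepsilon\dT,\varepsilon^2\mu,o)\bigr)\longrightarrow 0 \quad\text{as }\varepsilon\to 0.
\]
Combining this with Theorem~\ref{tree_to_CRT} via the triangle inequality for $d_{\mathrm{GHP}}^{\mathrm{loc}}$ and Slutsky's theorem yields the desired convergence in distribution.

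The only nontrivial step is the measure comparison, and that has already been carried out using the discrete mating of trees picture; the remainder is a routine passage from pointwise Prokhorov control to local GHP control. No serious obstacle is expected.
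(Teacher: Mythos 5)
Your approach matches the paper's: the paper establishes the deterministic Prokhorov bound $d_P(\varepsilon^2\lebT,\varepsilon^2\mu)\leq\varepsilon$ on $(T,\varepsilon\dT)$ via the discrete mating-of-trees picture, and then simply states that the theorem follows from this together with Theorem~\ref{tree_to_CRT}. Your proposal fills in the (unwritten) passage from the Prokhorov bound on the full tree, to GHP on $r$-balls, to local GHP via dominated convergence, and then to convergence in distribution; this is the intended argument.

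One small imprecision: taking the \emph{identity} correspondence in Proposition~\ref{distortion} does give zero distortion, but the remaining term $\inf_\nu\nu(\mathcal{R}^c)$ for that fixed $\mathcal{R}$ is a total-variation-type quantity, not the Prokhorov distance, so it does not ``inherit the same $O(\varepsilon)$ bound'' directly. The cleanest fix is to bypass Proposition~\ref{distortion} and use the embedding definition of $d_{\mathrm{GHP}}$: since the two spaces share the same underlying metric space, ball, and root, taking $Z$ to be that space and both embeddings to be the identity gives $d_{\mathrm{GHP}}(X_\varepsilon^r,Y_\varepsilon^r)\leq d_P(\varepsilon^2\lebT^r,\varepsilon^2\mu^r)$. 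Alternatively, you can salvage the correspondence route by enlarging $\mathcal{R}$ to all pairs at $\varepsilon\dT$-distance at most $\varepsilon$ (which still has distortion $\leq 2\varepsilon$) and invoking the coupling characterization of Prokhorov distance for the $\nu(\mathcal{R}^c)$ term. Either way, the boundary-mass issue when restricting to the ball is correctly identified and handled as in Proposition~\ref{discrete tree}, and the rest of the proposal (dominated convergence, triangle inequality) is sound.
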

		
	\section{FK map converges to spanning tree}\label{Chap_map_to_tree}
	\subsection{Pinch point}\label{chap_4.1}
		Recall map $\hyperref[def:Tutte triangulation]{\mathbb{T}}$ that consists of edges in $G$ and $G^*$ and Tutte edges of $(M,G)$, is called the Tutte triangulation of $(M,G)$. It can be viewed as a rooted planar map, where the root edge is the same as $M$. From the hamburger-cheeseburger bijection, a reducible finite subword $X(s,t)$ will correspond to a bubble-like region in $\hyperref[def:Tutte triangulation]{\mathbb{T}}$, enclosed by Tutte edges $\hyperref[def:V and E]{E}(s)$ and $\hyperref[def:V and E]{E}(t+1)$ (see Figure \ref{fig:reducible}). Therefore, if one runs a path in $M$ that goes from vertices inside $\hyperref[def:Tri]{\mathrm{Tri}}[s,t]$ to the outside, one must go through vertex $\hyperref[def:V and E]{V}(s)=\hyperref[def:V and E]{V}(t+1)$.  
		
		For the $k$-th smallest reducible word $[s,t]$ that contains $u \in \mathbb{Z}$, if it exists, we call vertex $y:=\hyperref[def:V and E]{V}(s)=\hyperref[def:V and E]{V}(t+1)$ the \textbf{$k$-th pinch point} of $x:=\hyperref[def:V and E]{V}(u)$, and write $y=p_k(x)$. We also write $p(x)$ for the first pinch point and $p_0(x)$ for $x$ itself.
  
		The region in $\hyperref[def:Tutte triangulation]{\mathbb{T}}$ corresponding to the ($k$-th) reducible word after removing all nested reducible subwords is called the ($k$-th) \textbf{bubble}, and the regions corresponding to reducible subwords are called \textbf{holes}. A bubble together with its holes is called a filled bubble. See Figure \ref{pinch_point} for an illustration.

		\begin{figure}[htbp]
			\centering
			\vspace{0.2in}
			\includegraphics[scale=0.22]{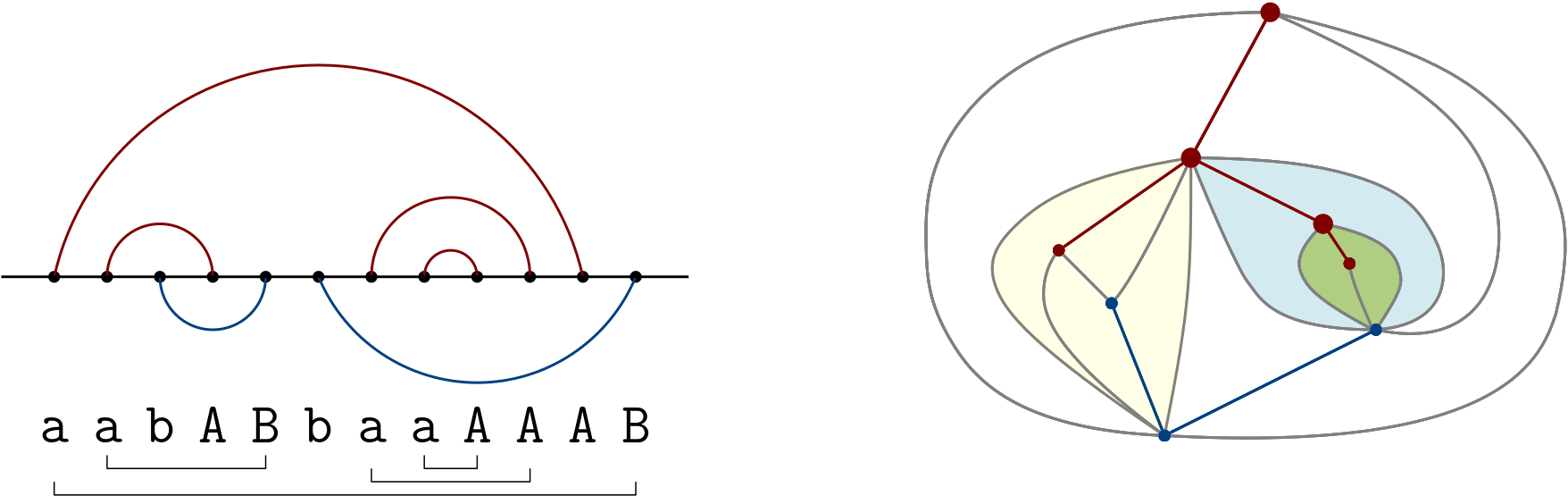}
			\caption{A reducible word with nested reducible subwords encoding a region with holes. The pinch points are the red points with larger sizes. The objects in the dual map are colored blue. The regions in different colors are the bubbles. Be cautious that while there is another blue point on a bubble's boundary, it isn't an exit of the bubble. This is because it's not a vertex in $M$, but rather a vertex in $M^*$.}
			\label{pinch_point}
		\end{figure}
			
		\begin{proposition}[Spatial Markov]\label{spatial_Markov}
			Let $B$ be the innermost filled bubble that contains $o$, and let $e_1$ and $e_2$ be the two Tutte edges that enclose $B$. Then collapse $B$, and glue $e_1$ and $e_2$ into single edge, the remaining map, re-rooted at this new edge, will have the same distribution as $\hyperref[def:Tutte triangulation]{\mathbb{T}}$, and independent of $B$.
		\end{proposition}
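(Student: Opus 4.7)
The plan is to transfer the statement to the underlying bi-infinite i.i.d.\ word $X$ via the hamburger--cheeseburger bijection. Under that bijection, the innermost filled bubble $B$ containing $o$ corresponds to a containment-minimal pair $(S,T)\in\mathbb{Z}^{2}$ with $S\leq 0\leq T+1$ and $X(S,T)$ reducible; the enclosing Tutte edges are then $e_{1}=\hyperref[def:V and E]{E}(S)$ and $e_{2}=\hyperref[def:V and E]{E}(T+1)$. A tie-breaking convention handles the edge case in which $o$ sits at a pinch point shared by two minimal bubbles.

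The crucial observation is that, by Locality (Lemma~\ref{lemma_locality}), the event ``$(s,t)$ is the containment-minimal reducible interval with $s\leq 0\leq t+1$'' depends only on $(X_{i})_{i\in[s,t]}$: reducibility of $X(s,t)$ is an interior property, and minimality only requires checking reducibility on proper sub-intervals of $[s,t]$, which is again interior. Hence the pair $\bigl((S,T),X(S,T)\bigr)$ is measurable with respect to the $\sigma$-algebra generated by the letters inside the random window $[S,T]$.

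Now define the excised word $X'$ by $X'_{i}:=X_{i+S}$ for $i<0$ and $X'_{i}:=X_{i+T+1}$ for $i\geq 0$. Conditional on $\{(S,T)=(s,t),\,X(s,t)=w\}$, the outside letters $(X_{i})_{i\notin[s,t]}$ remain i.i.d.\ $\theta_{p}$ and independent of the conditioning, because all letters of $X$ are i.i.d.\ and the conditioning is purely interior. Averaging over $(s,t,w)$ yields that $X'$ is i.i.d.\ $\theta_{p}$, equidistributed with $X$, and independent of $\bigl((S,T),X(S,T)\bigr)$, hence independent of $B$. It remains to identify the map encoded by $X'$: because $X(S,T)$ is reducible, every letter in $[S,T]$ is matched within $[S,T]$, so deleting this block preserves all arcs, dual vertices, and triangles $\hyperref[def:Tri]{\mathrm{Tri}}(i)$ for $i\notin[S,T]$, and the root edge the bijection assigns to $X'$ sits between $X'_{-1}=X_{S-1}$ and $X'_{0}=X_{T+1}$, which is precisely the glued edge obtained from $e_{1}$ and $e_{2}$. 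Thus the map encoded by $X'$ is $\hyperref[def:Tutte triangulation]{\mathbb{T}}$ with $B$ collapsed and $e_{1},e_{2}$ identified, re-rooted at the glued edge.

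The main obstacle is the (subtle but essential) claim that containment-minimality of the bounding interval is itself an internal condition --- this requires applying Lemma~\ref{lemma_locality} not just to the window $X(s,t)$ but to every proper sub-interval of $[s,t]$ whose closure contains $0$. Once internality is established, the i.i.d.\ nature of $X$ together with the local character of the bijection immediately deliver both the distributional identity and the required independence.
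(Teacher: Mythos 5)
Your proof is correct and takes essentially the same route as the paper: transfer the statement to the i.i.d.\ word via the hamburger--cheeseburger bijection, use Locality (Lemma~\ref{lemma_locality}) to see that the relevant conditioning event is an interior event, and then excise the block. The only substantive difference is that you are more careful than the paper about the point that the endpoints $(S,T)$ of the innermost reducible interval are themselves random, and you spell out why the event $\{(S,T)=(s,t)\}$ (not merely $\{X(s,t)\ \text{reducible}\}$) is measurable with respect to $(X_i)_{i\in[s,t]}$, since minimality is checked only on sub-intervals of $[s,t]$. The paper's proof silently conditions on fixed $(s,t)$ and lets the reader fill in this disintegration step; your version is a legitimate sharpening of the same argument. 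One small remark: the tie-breaking convention you invoke is not actually needed --- if $X(s_1,t_1)$ and $X(s_2,t_2)$ are two reducible intervals both containing $[-1,0]$, then every time in the intersection $[s_2,t_1]$ (say $s_1<s_2\le t_1<t_2$) is matched inside both $[s_1,t_1]$ and $[s_2,t_2]$, hence inside the intersection, so $X(s_2,t_1)$ is reducible; it follows that the minimal reducible interval containing $[-1,0]$ is unique.
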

		
		\begin{proof}
			By Lemma \ref{lemma_locality}, whether or not $X(s,t)$ is reducible, only depends on $ (X(i))_{s\leq i \leq t}$. Since the symbols of $X$ are i.i.d., if $0\in [s,t]$, then when we condition on the event that $X(s,t)$ is reducible, the conditional law of $(...X(s-1)X(t+1)...)$ is the same as the law of $X$ (here index $t+1$ takes the place of $0$). Via the hamburger-cheeseburger bijection, we get the spatial Markov property for the map.
		\end{proof}

        We can also get the strong spatial Markov property easily, which will be used in Chapter \ref{chap_5}:

        \begin{corollary}\label{prop:strong_markov}
            Let $T$ be a finite stopping time. The proposition is still true when $B$ is the $T$-th innermost filled bubble that contains $o$.
        \end{corollary}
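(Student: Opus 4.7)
The plan is to upgrade Proposition \ref{spatial_Markov} to a stopping time via the standard discretization argument. First, enumerate the nested reducible subwords containing $0$ as $[s_1,t_1] \subset [s_2,t_2] \subset \cdots$, with $[s_k,t_k]$ the $k$-th smallest, and let $B_k$ be the corresponding $k$-th innermost filled bubble. Define $\mathcal{F}_k := \sigma(X_i : s_k \leq i \leq t_k)$; this is the natural filtration with respect to which $T$ should be interpreted as a stopping time. By iterating Proposition \ref{spatial_Markov} (apply it once to extract $B_1$ and obtain the exterior word distributed as $\P_p$ and independent of $B_1$; then re-apply it inside that exterior to extract $B_2$; and so on), one sees that for any \emph{deterministic} $k$, the exterior word $(\ldots, X_{s_k-1}, X_{t_k+1}, \ldots)$, reindexed so that $X_{t_k+1}$ takes position $0$, has law $\P_p$ and is independent of $\mathcal{F}_k$. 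Via the hamburger-cheeseburger bijection, this says that collapsing $B_k$ and gluing the two enclosing Tutte edges into one yields a rooted decorated map distributed as $\mathbb{T}$ and independent of $B_k$.

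Next, since $\{T = k\} \in \mathcal{F}_k$ by the stopping-time assumption, I decompose expectations over $\{T=k\}$. For bounded measurable test functionals $F$ (of the filled bubble) and $G$ (of the collapsed-and-reglued decorated map), write
\[
\E\bigl[F(B_T)\, G(\widetilde{\mathbb{T}}_T)\bigr] \;=\; \sum_{k \geq 1} \E\bigl[F(B_k)\, G(\widetilde{\mathbb{T}}_k)\, \mathbf{1}_{\{T=k\}}\bigr],
\]
where $\widetilde{\mathbb{T}}_k$ denotes the map obtained by collapsing $B_k$ and gluing its enclosing Tutte edges. On $\{T=k\}$, both $F(B_k)$ and $\mathbf{1}_{\{T=k\}}$ are $\mathcal{F}_k$-measurable while $\widetilde{\mathbb{T}}_k$ is independent of $\mathcal{F}_k$ with the same law as $\mathbb{T}$. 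Factoring the expectation on each $\{T=k\}$ and re-summing gives
\[
\E\bigl[F(B_T)\, G(\widetilde{\mathbb{T}}_T)\bigr] \;=\; \E[F(B_T)] \cdot \E[G(\mathbb{T})],
\]
which is precisely the independence and identification of laws asserted in the corollary.

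The only mild subtlety is pinning down the correct filtration, since the $B_k$ are encoded by a random index window in $X$ rather than by a fixed prefix; once $(\mathcal{F}_k)$ is in place, the argument is the standard discrete strong Markov reduction, and the hypothesis that $T$ is finite ensures the sum over $k$ constitutes a legitimate probability decomposition with no tail term to control.
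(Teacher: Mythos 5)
Your argument is correct, and it is the expected one: the paper simply asserts that the corollary follows ``easily'' from Proposition~\ref{spatial_Markov}, and your identification of the filtration $\mathcal{F}_k = \sigma(X_i : s_k \le i \le t_k)$ together with the decomposition over $\{T = k\}$ is the standard discrete strong-Markov upgrade. One small stylistic remark: rather than iterating Proposition~\ref{spatial_Markov} (which, taken literally, gives independence from $\sigma$-algebras attached to the successively collapsed words and requires a bit of bookkeeping to identify with $\mathcal{F}_k$), the key fact that for each fixed $k$ the reindexed exterior word is independent of $\mathcal{F}_k$ with law $\P_p$ follows from a single partition over the $\mathcal{F}_k$-measurable events $\{[s_k,t_k]=[a,b]\}$, since on each such event the exterior $(X_i)_{i \notin [a,b]}$ is independent of $(X_i)_{a\le i\le b}$ by the i.i.d.\ structure while the event itself is determined by $(X_i)_{a\le i\le b}$ via Lemma~\ref{lemma_locality} and the nesting of reducible intervals around $0$; your stopping-time decomposition over $\{T=k\}$ then closes the argument exactly as written.
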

		
		\begin{proposition}[Translation invariance]\label{prop:translation_invariance}
			For fixed $t$, if we reroot the map from the root edge $\hyperref[def:V and E]{E}(0)$ to the edge $\hyperref[def:V and E]{E}(t)$, the new map will have the same distribution as the original map.
		\end{proposition}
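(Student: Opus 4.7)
The plan is to derive the claim directly from the translation invariance of the bi-infinite word $X$ under the hamburger-cheeseburger bijection. Define the shift map $\sigma^t$ on bi-infinite words by $(\sigma^t X)_n := X_{n+t}$. Under $\mathbb{P}_p = \theta_p^{\otimes \mathbb{Z}}$, the symbols of $X$ are i.i.d., so the law of $X$ is manifestly invariant under $\sigma^t$, i.e.\ $\sigma^t X \stackrel{d}{=} X$.

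The core step is to verify that the bijection described in Chapter~\ref{Chap_Bijection} is equivariant under the shift: applying the construction to $\sigma^t X$ yields, up to the canonical relabeling of $\mathbb{Z}$ by $n \mapsto n - t$, exactly the same decorated planar map $(M, G)$ produced from $X$, but with the root edge at the Tutte edge originally labelled $\hyperref[def:V and E]{E}(t)$. This is straightforward to trace through each stage of the construction: the matching of symbols depends only on the LIFO rule applied to the intrinsic sequence (Lemma~\ref{lemma_locality}), so the matchings of $\sigma^t X$ are the shifts of the matchings of $X$; consequently the arc map $\hyperref[arc map]{\mathbb{A}}$, its dual triangulation $\hyperref[def:Tutte triangulation]{\mathbb{T}}$, and the final flipping of $\texttt{F}$-diagonals all transport covariantly under the shift. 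In particular, the triangle $\hyperref[def:Tri]{\mathrm{Tri}}(0)$ built from $\sigma^t X$ is canonically identified with the triangle $\hyperref[def:Tri]{\mathrm{Tri}}(t)$ built from $X$, and hence the root edges match up as claimed.

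Combining the two observations gives the conclusion in one line:
\[
\bigl(M, G, \hyperref[def:V and E]{E}(t)\bigr) \;=\; \bigl(M(X), G(X), \hyperref[def:V and E]{E}_X(t)\bigr) \;\stackrel{d}{=}\; \bigl(M(\sigma^t X), G(\sigma^t X), \hyperref[def:V and E]{E}_{\sigma^t X}(0)\bigr) \;\stackrel{d}{=}\; \bigl(M, G, \hyperref[def:V and E]{E}(0)\bigr),
\]
where the first equality is the identification of re-rooting on the original word with rooting at $0$ on the shifted word (the equivariance of the bijection), and the second uses $\sigma^t X \stackrel{d}{=} X$. There is no real obstacle here; the only thing that requires minor care is checking that the combinatorial output of each step of the bijection depends only on the intrinsic pattern of the word (and not on its labeling by $\mathbb{Z}$), so that the shift acts by a relabeling of the construction rather than producing a genuinely different map.
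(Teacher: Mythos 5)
Your argument is correct and is essentially the same as the paper's: both proofs observe that the law $\mathbb{P}_p=\theta_p^{\otimes\mathbb Z}$ is shift-invariant and then transport this invariance through the hamburger-cheeseburger bijection to the rooted map. You spell out the equivariance of the bijection under the shift more explicitly than the paper's one-line proof, which is a helpful but not substantively different elaboration.
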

		
		\begin{proof}
			Note the distribution of $X$ is invariant under shift operator $(X_i) \mapsto (X_{i+t})$, we know the law of the $\hyperref[def:Tutte triangulation]{\mathbb{T}}$ (thus also the law of FK map) is invariant under re-rooting, via hamburger-cheeseburger bijection. 
		\end{proof}
		
		\begin{lemma}[Decomposition of the path]\label{lem:Decomposition of the path}
			For fixed $t$, let $x=\hyperref[def:V and E]{V}(t)$ be a vertex in $M$. The geodesic $l$ (for either $\dT$ or $\dM$) between $x$ and $y=p_k(x)$ can be decomposed as $l = \bigsqcup_{ \ 0 \leq I < k} l_i$, where each $l_i$ is a geodesic from $p_i(x)$ to $p_{i+1}(x)$. Moreover, they stay in different bubbles thus their lengths are independent of each other.
		\end{lemma}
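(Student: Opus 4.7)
The plan is to realize the pinch points as a nested sequence of cut vertices in $M$ through which every $x$-to-$y$ path must pass in order, and then to invoke the strong spatial Markov property (Corollary~\ref{prop:strong_markov}) to separate the resulting pieces into independent random objects.

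For the geometric decomposition, let $B_1\subsetneq B_2\subsetneq\cdots\subsetneq B_k$ denote the nested filled bubbles containing the integer $t$ encoding $x=\hyperref[def:V and E]{V}(t)$, so that $p_i(x)$ is the pinch point of $B_i$, corresponding to the $i$-th smallest reducible subword $[s_i,t_i]$ containing $t$. From the discussion around Figure~\ref{fig:reducible}, $B_i$ is enclosed in $\hyperref[def:Tutte triangulation]{\mathbb{T}}$ by exactly two Tutte edges $\hyperref[def:V and E]{E}(s_i)$ and $\hyperref[def:V and E]{E}(t_i+1)$, whose only common endpoint in $M$ is $p_i(x)=\hyperref[def:V and E]{V}(s_i)=\hyperref[def:V and E]{V}(t_i+1)$; the other shared endpoint lies in the dual map and plays no role for paths inside $M$ or $T$. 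Consequently $p_i(x)$ is the unique vertex of $M$ on $\partial B_i$, so any path in $M$ (and a fortiori in $T$) from $x\in B_i$ to $y=p_k(x)$, which lies on $\partial B_k$ and strictly outside each $B_i$ with $i<k$, must pass through $p_i(x)$. The nesting of the bubbles forces these passages to occur in the order $p_1(x),\ldots,p_k(x)$.

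Now apply this to a geodesic $l$ between $x$ and $y$ in either metric, which without loss of generality may be taken to be simple (excising any backtrack yields a shorter path, contradicting geodesy). Splitting $l$ at its visits to the pinch points produces subpaths $l_i$ from $p_i(x)$ to $p_{i+1}(x)$, each of which is automatically a geodesic, since replacing any $l_i$ by a shorter path would shorten $l$. Simplicity of $l$, combined with the uniqueness of $p_j(x)$ on $\partial B_j$, forces $l_i$ to stay in the closed region $B_{i+1}\setminus\operatorname{int}(B_i)$: any excursion out of $B_{i+1}$ or into $B_i$ would require a second visit to $p_{i+1}(x)$ or $p_i(x)$ respectively. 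Thus the $l_i$ occupy pairwise disjoint regions, as claimed.

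For the independence of the lengths $|l_i|$, I would iterate Corollary~\ref{prop:strong_markov}: collapsing $B_1$ and gluing its two boundary Tutte edges into a single edge produces a re-rooted Tutte triangulation of the same law, in which the new innermost filled bubble containing the root is precisely the collapsed image of $B_2\setminus\operatorname{int}(B_1)$, independent of $B_1$. Induction on $i$ then yields mutual independence of the regions $B_1,\,B_2\setminus\operatorname{int}(B_1),\ldots,B_k\setminus\operatorname{int}(B_{k-1})$. Because the glued Tutte edges are not edges of $M$ or $T$, the graph distance $|l_i|$ is unaffected by these identifications, so it is measurable with respect to the $(i+1)$-th region alone, and independence of the lengths follows. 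The main obstacle will be bookkeeping the boundary identifications: one must check that the collapsing operation preserves the relevant graph distances between interior vertices of each region and that the distributional statement in Corollary~\ref{prop:strong_markov} really applies at each step of the induction.
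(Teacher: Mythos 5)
Your proposal is correct and follows essentially the same route as the paper: the pinch points are cut vertices that every $x$-to-$y$ path must traverse in nested order, simplicity of a geodesic pins each subpath $l_i$ inside a single bubble, and the spatial Markov property then yields independence. One small bookkeeping item the paper handles and you elide: Proposition~\ref{spatial_Markov} and Corollary~\ref{prop:strong_markov} are stated for bubbles around the root $o=\hyperref[def:V and E]{V}(0)$, so for a general fixed $t$ you should first invoke translation invariance (Lemma~\ref{prop:translation_invariance}) to re-root at $\hyperref[def:V and E]{E}(t)$; also, since $k$ is deterministic, the ordinary spatial Markov property iterated $k-1$ times suffices and the strong Markov corollary is unnecessary overkill here.
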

		
		\begin{proof}
			Since pinch points are the only possible exit points to get out of a filled bubble, we can easily get the decomposition. See the Figure below for an illustration. Each $l_i$ stays in a single bubble: otherwise, it would pass a pinch point twice and form a loop, which it cannot do since a geodesic must be a simple curve. Finally, by the spatial Markov property (Proposition~\ref{spatial_Markov}) and translation invariance (Lemma~\ref{prop:translation_invariance}), we get the independence.
			
			\begin{figure}[htbp]
				\centering
				\vspace{0.1in}\includegraphics[scale=0.23]{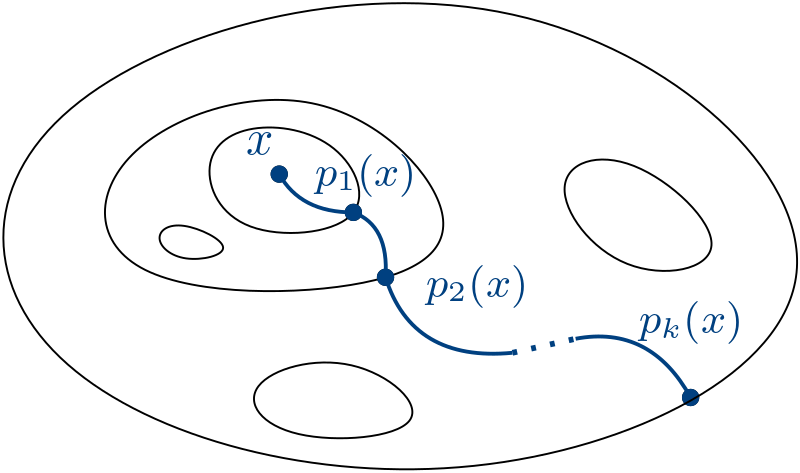}
				\label{fig:decomposition_of_geodesic}
			\end{figure}
			
		\end{proof}
		
		Since $T$ is a subgraph of $M$, we have that $\dM$ is bounded above by $\dT$. The key tool that allows us to get a bound in the other direction is \cite[Proposition 3.8]{she16}\footnote{Terminology ``$\E |E| < \infty$'' in \cite[Proposition 3.8]{she16} is equivalent to $p > 1/2$ by \cite[Lemma 3.1, Lemma 3.6]{she16}.}.
		
		\begin{proposition}[\cite{she16}]\label{finite_expectation}
			Assume that $q > 4$, equivalently, $p > 1/2$. Suppose $[s,t]$ is the smallest among intervals contained $[-1,0]$ such that $X\left(s, t\right)$ is reducible, then $s,t$ are a.s. finite. Assume the reduced form of $X(0,t)$ has $K$ orders, then we have $\mathbb{E}[K]<\infty$. 
		\end{proposition}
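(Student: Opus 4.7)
The plan is to bound $K$ by the size of the stationary stack $Z_{-1} := \overline{X(-\infty, -1)}$ and to invoke the finite-envelope-length result of \cite{she16} for $p > 1/2$.

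For the \emph{finiteness of $s, t$}, observe that if $Z_{s-1} = \emptyset$ and $Z_t = \emptyset$, then every letter of $[s, t]$ is globally matched within $[s, t]$, so $X(s, t)$ is reducible. For $p > 1/2$, the analysis of \cite{she16} gives that the bi-infinite stack chain $(Z_n)_{n \in \mathbb{Z}}$ is stationary, ergodic, and positive recurrent, so $\mathbb{P}[Z_0 = \emptyset] > 0$. Ergodicity then forces $\{Z_n = \emptyset\}$ to occur at infinitely many $n \geq 0$ and infinitely many $n \leq -2$ almost surely, yielding a finite reducible interval containing $[-1, 0]$; the minimal such interval is therefore also finite.

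For the \emph{bound $K \leq |Z_{-1}|$}, I would argue that orders in $\overline{X(0, t)}$ correspond precisely to orders in $[0, t]$ whose global match lies in $(-\infty, -1]$: local reduction agrees with the global matching on pairs inside $[0, t]$, and any order in $[0, t]$ matched outside $[0, t]$ must by the LIFO rule be matched to a past burger, hence to a burger in $(-\infty, -1]$. By reducibility of $X(s, t)$ these matches in fact lie in $[s, -1]$, and by injectivity there are at most $|Z_{-1}|$ such orders. Here we use that $Z_{-1}$ consists only of burgers: any order in $(-\infty, -1]$ has, by LIFO, its global match at a burger also in $(-\infty, -1]$, and the pair cancels in the reduction.

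Writing $M_n$ for the match time of $X_n$ and $E_n := |M_n - n|$ for the envelope length, we have $|Z_{-1}| = \sum_{n \leq -1} \mathbf{1}_{X_n \text{ burger},\, M_n \geq 0}$, and translation invariance of the i.i.d.\ law gives
\[
\mathbb{E}[|Z_{-1}|] \ =\ \sum_{k \geq 1} \mathbb{P}\bigl[X_0 \text{ burger},\, M_0 \geq k\bigr] \ \leq\ \mathbb{E}[E_0].
\]
By the footnoted \cite[Lemmas 3.1, 3.6]{she16}, $\mathbb{E}[E_0] < \infty$ iff $p > 1/2$, completing the argument. The main obstacle is the first part: establishing positive recurrence of the stack chain (equivalently, $\mathbb{P}[Z_0 = \emptyset] > 0$) is where the hypothesis $p > 1/2$ plays its substantive role, while the remaining steps are short combinatorial and integration bookkeeping.
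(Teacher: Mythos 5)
The proposal has a fatal gap that stems from a misidentification of the quantity controlled by \cite[Proposition 3.8]{she16}. Your $Z_{-1}:=\overline{X(-\infty,-1)}$ is almost surely \emph{infinite}: this is precisely the content of the unnumbered lemma in Chapter~\ref{Chap_Bijection} (``for any time $t$, $\overline{X(-\infty,t)}$ contains infinitely many $\texttt{a}$ and infinitely many $\texttt{b}$''). Hence the bound $K\le|Z_{-1}|$ is vacuous, and $\E|Z_{-1}|=\infty$. Your own computation then shows
\[
\E|Z_{-1}|=\sum_{k\ge 1}\P\bigl[X_0\ \text{burger},\,M_0\ge k\bigr]=\E\bigl[|M_0|\,\mathbf{1}_{X_0\ \text{burger}}\bigr],
\]
so this last expectation is in fact infinite. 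The match distance $E_0=|M_0|$ is essentially a first-passage time of a null-recurrent walk (the net burger count $|H_n|+|C_n|$ is a simple symmetric random walk for every $p$), and its expectation is infinite for all $p$. The quantity Sheffield shows has finite first moment for $p>1/2$ is the length of the smallest reducible interval $[s,t]$ containing the origin — a strictly more constrained object than a single match distance, and this distinction is where $p>1/2$ actually enters. Conflating these two quantities is what makes the final step of your argument fail.

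The recurrence premise is also off. Since $Z_0=\overline{X(-\infty,0)}$ is a.s.\ infinite, $\P[Z_0=\emptyset]=0$; there is no empty state to recur to, and no ``positive recurrence'' to invoke. (Even in the one-sided setting the burger count is a symmetric, hence null-recurrent, walk.) Note that the paper itself offers no proof here — Proposition~\ref{finite_expectation} is cited verbatim from \cite{she16}, with the footnote translating ``$\E|E|<\infty$'' into the condition $p>1/2$ via Lemmas~3.1 and 3.6 of that paper. A correct elementary route would observe directly that the orders of $\overline{X(0,t)}$ are matched injectively to burgers in $[s,-1]$, so $K\le t-s$, and then invoke finiteness of $\E[t-s]$ from \cite{she16}; but establishing $\E[t-s]<\infty$ is exactly the hard content of Sheffield's Proposition~3.8 and cannot be replaced by a bound on $\E|M_0|$.
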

		
		From Proposition \ref{finite_expectation}, we can see that when $p>1/2$, a.s.\ for each $x\in\mathbb Z$, there are infinitely many intervals $[a,b]$ with $a < 0 < b$ such that $X(a,b)$ is reducible. Indeed, by the spatial Markov property (Proposition~\ref{spatial_Markov}), if we condition on the first $k$ reducible words containing $o$, then the probability that there are at least $k+1$ reducible words containing $o$ is equal to the probability that there is at least one reducible word containing $o$. This probability is equal to 1 by Proposition~\ref{finite_expectation}.
		
		\begin{proposition} \label{prop:dist-K}
			$d_T(o,p(o)) = K$, where $K$ is defined in Proposition \ref{finite_expectation}.
		\end{proposition}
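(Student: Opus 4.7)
The plan is to translate the tree distance $d_T(o, p(o))$ into a statement about the hamburger walk $H$ and then unpack the walk displacement combinatorially. Using the identification $V(u) = \proj_H(u)$ from the tree encoding of Section~\ref{Chap_Tree_to_CRT}, I would write
\[
d_T(o, p(o)) = d_T(V(0), V(t+1)) = d_H(0, t+1) = H_0 + H_{t+1} - 2\, m_H(0, t+1) = H_{t+1} - 2\, m_H(0, t+1),
\]
using $H_0 = 0$ in the last equality.

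Next I would pin down $m_H(0, t+1)$ using the pinch identity. Since $V(s) = V(t+1)$, the encoding gives $\proj_H(s) = \proj_H(t+1)$, which forces $H_s = H_{t+1}$ and $H_u \geq H_{t+1}$ for every $u \in [s, t+1]$. Restricting the infimum to the sub-interval $[0, t+1] \subset [s, t+1]$ then gives $m_H(0, t+1) = H_{t+1}$, so that $d_T(o, p(o)) = -H_{t+1}$.

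Finally I would interpret $-H_{t+1}$ combinatorially as the count $K$ of orders in $\overline{X(0, t)}$. Minimality of $[s, t]$ together with reducibility of $X(s, t)$ forces $\overline{X(0, t)}$ to consist solely of backward-reaching orders: any forward-burger there would remain unmatched inside $[s, t]$, contradicting reducibility. By the definition of $H$ after the $\texttt{F}$-replacement, the net displacement $-H_{t+1}$ exactly tallies the unmatched orders crossing out of $[0, t]$ to the left, and combined with the analogous accounting for the cheeseburger walk $C$ this accounts for all $K$ orders of $\overline{X(0, t)}$.

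The main obstacle is the last step: the bookkeeping that converts the walk value $-H_{t+1}$ into the combinatorial count $K$ of Proposition~\ref{finite_expectation} must carefully track which symbols in $X(0, t)$ cancel internally and which reach back across time $0$, especially under the $\texttt{F}$-replacement convention. The first two steps are essentially automatic once the pinch identity $V(s) = V(t+1)$ is rephrased in the tree-encoding language, so this final inventory is where the substance of the proof lies.
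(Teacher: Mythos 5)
Your first two steps are correct and in fact give a cleaner derivation of the distance than the exploration-line argument in the paper: from $\proj_H(s)=\proj_H(t+1)$ one gets $m_H(s,t+1)=H_s=H_{t+1}$, hence $m_H(0,t+1)=H_{t+1}$ since $s<0<t+1$, and so $d_T(o,p(o))=H_0+H_{t+1}-2m_H(0,t+1)=-H_{t+1}$.

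The gap is in the final step. After $\texttt{F}$-replacement, $-H_{t+1}=\#\texttt{A}-\#\texttt{a}$ in $[0,t]$. Because $X(s,t)$ is reducible, every $\texttt{a}$ in $[0,t]$ is matched to some $\texttt{A}$ in $[0,t]$, so $-H_{t+1}$ is exactly the number of unmatched $\texttt{A}$'s in $[0,t]$, i.e.\ the number of hamburger orders (and flexible orders matched to hamburgers) in $\overline{X(0,t)}$. But $K$ counts \emph{all} orders in $\overline{X(0,t)}$, including cheeseburger-type ones, which contribute to $-C_{t+1}$ and are invisible to $H$. Your own third paragraph even acknowledges this when you invoke ``the analogous accounting for the cheeseburger walk $C$'': if $-H_{t+1}$ plus a $C$-contribution accounts for all $K$ orders, then $-H_{t+1}=K$ can only hold if the $C$-contribution vanishes, which it need not. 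A simple example: $X(-2)\cdots X(1)=\texttt{b}\texttt{a}\texttt{B}\texttt{A}$ is the smallest reducible word containing $[-1,0]$, $\overline{X(0,1)}=\texttt{B}\texttt{A}$ gives $K=2$, yet $-H_2=1=d_T(o,p(o))$. So your argument, correctly completed, yields $d_T(o,p(o))=-H_{t+1}\le K$, not equality. That said, this is all that the paper actually proves (its own proof line reads ``$\le$'') and all that is used downstream (only $\E d_T(o,p(o))\le \E K<\infty$ is needed); the ``$=$'' in the statement of the proposition should be read as ``$\le$''.
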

		
		\begin{proof}
			Let $t$ as defined in Proposition \ref{finite_expectation}. Consider the exploration line passing $\{\hyperref[def:Tri]{\mathrm{Tri}}(i)\}_{ 0 \leq i \leq t}$  in order. It is a contour line of $T$ which starts near $o=\hyperref[def:V and E]{V}(0)$, then takes the right direction along the spanning tree, to the pinch point $p(o)=\hyperref[def:V and E]{V}(t+1)$. When one hamburger is consumed by an order, the exploration line will come back to the root of a tree branch. Thus when we compute $\dT(o,p(o))$, we will exactly omit such a consumed pair. Thus $\dT(o,p(o)) \leq \# \{ \text{Hamburger orders and flexible orders in } \overline{X(0,t)}\}\leq K$. See Figure \ref{fig:dT=K} for an explanation. 
		  \begin{figure}[htbp]
			 \centering
			 \vspace{0.2in}
			 \includegraphics[scale=0.18]{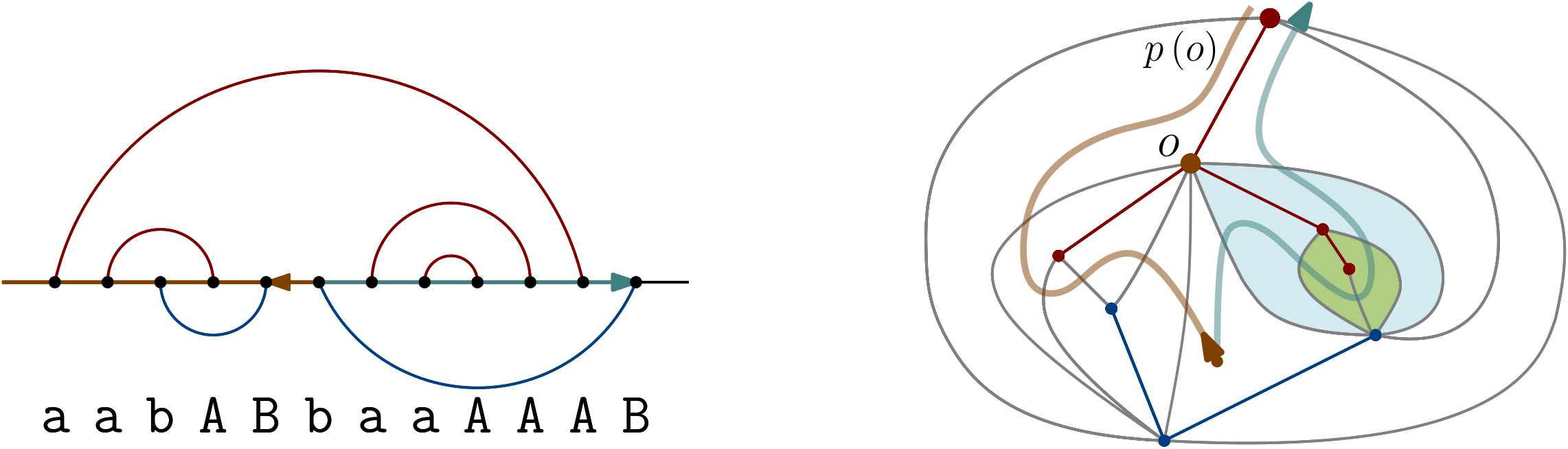}
			 \caption{The blue line is the exploration line in the positive direction, while the brown one is in the negative direction.}
			 \label{fig:dT=K}
    		\end{figure}
        \end{proof}
		
    	By Propositions~\ref{finite_expectation} and~\ref{prop:dist-K},
    	\[
    		      0<\mathbb{E}\dM(o,p(o)) \leq \mathbb{E}\dT(o,p(o)) \leq \E(K) < \infty. 
    	\]
		Let \label{def:alpha}
		\[
		\hyperref[def:alpha]{\mathfrak{a}} := \frac{\mathbb{E}\dT(o,p(o))}{\mathbb{E}\dM(o,p(o))} \in [1,\infty) .
		\]
		The map is invariant under re-rooting, thus we have $\mathbb{E}(\hyperref[def:alpha]{\mathfrak{a}}\dM(x,p(x))-\dT(x,p(x)))=0$ for any $x = \hyperref[def:V and E]{V}(t)$ for a fixed $t$. For simplicity of notation, we define\label{D}
		\[
		\hyperref[D]{D}(x,y) := \hyperref[def:alpha]{\mathfrak{a}} \dM(x,y)-\dT(x,y) .
		\]
		
		\begin{proposition}\label{bound_pinch}
			For $ \varepsilon >0$, let $x$ be the $\lfloor n\varepsilon \rfloor$-th pinch point of $o$, we have:
			$$\frac{1}{n}\hyperref[D]{D}(o,x)\rightarrow 0 \text{ a.s. and in }L^1.$$
		\end{proposition}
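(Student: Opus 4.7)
The plan is to reduce the statement to a standard law of large numbers by using the decomposition of geodesics through consecutive pinch points. Set $x_i := p_i(o)$, so $x_0 = o$ and $x = x_{\lfloor n\varepsilon\rfloor}$. By Lemma \ref{lem:Decomposition of the path}, the $d_T$-geodesic and $d_M$-geodesic from $o$ to $x$ both split across the bubbles between consecutive $x_i$, giving
\[
\hyperref[D]{D}(o,x) \;=\; \sum_{i=0}^{\lfloor n\varepsilon\rfloor - 1} \hyperref[D]{D}(x_i, x_{i+1}).
\]
So everything reduces to analyzing the i.i.d.\ summands $D_i := \hyperref[D]{D}(x_i,x_{i+1})$.

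Next I would verify that the $D_i$ are indeed i.i.d.\ with mean zero and in $L^1$. Independence across $i$ comes from the spatial Markov property (Proposition \ref{spatial_Markov}, or its strong form Corollary \ref{prop:strong_markov}): after collapsing the innermost filled bubble containing $o$, the remaining map is independent of that bubble and has the same law as the original $\hyperref[def:Tutte triangulation]{\mathbb{T}}$; iterating gives that the bubbles between successive pinch points are independent, and by Lemma \ref{lem:Decomposition of the path} the geodesic restricted to each bubble depends only on that bubble. Identical distribution follows from translation invariance (Proposition \ref{prop:translation_invariance}), which ensures that re-rooting at $x_i$ produces a copy of the original map; in particular $(x_i, x_{i+1})$ under re-rooting is distributed as $(o,p(o))$. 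The mean of each $D_i$ vanishes by the very definition of $\hyperref[def:alpha]{\mathfrak{a}}$. Finally, $L^1$-integrability is immediate from
\[
|D_i| \;\leq\; (\hyperref[def:alpha]{\mathfrak{a}}+1)\,\dT(x_i,x_{i+1}) \;\overset{d}{=}\; (\hyperref[def:alpha]{\mathfrak{a}}+1)\,\dT(o,p(o)) \;\leq\; (\hyperref[def:alpha]{\mathfrak{a}}+1)K,
\]
which is integrable by Proposition \ref{finite_expectation} combined with Proposition \ref{prop:dist-K}.

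Having set this up, the conclusion is a direct application of Kolmogorov's strong law of large numbers for i.i.d.\ $L^1$ random variables:
\[
\frac{1}{\lfloor n\varepsilon\rfloor}\sum_{i=0}^{\lfloor n\varepsilon\rfloor - 1} D_i \;\longrightarrow\; \E[D_0] = 0 \quad \text{a.s.\ and in } L^1,
\]
and multiplying by $\lfloor n\varepsilon\rfloor / n \to \varepsilon$ yields $\tfrac{1}{n}\hyperref[D]{D}(o,x) \to 0$ both almost surely and in $L^1$ (the latter uses the uniform integrability of averages of i.i.d.\ $L^1$ variables).

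The only step that requires real care is justifying that the summands really are genuinely i.i.d.\ rather than merely exchangeable, since the geometric decomposition and the probabilistic invariances must be combined coherently: the strong spatial Markov property (Corollary \ref{prop:strong_markov}) applied iteratively at the $i$-th innermost filled bubble around $o$ is the clean way to express this, and Lemma \ref{lem:Decomposition of the path} then guarantees that $D_i$ is measurable with respect to the $i$-th bubble alone. Once this measurability-plus-independence is nailed down, the rest is just SLLN.
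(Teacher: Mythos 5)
Your proof is correct and follows essentially the same route as the paper: decompose $\hyperref[D]{D}(o,x)$ along consecutive pinch points via Lemma~\ref{lem:Decomposition of the path}, establish that the increments are i.i.d.\ mean-zero $L^1$ via the spatial Markov property and the definition of $\hyperref[def:alpha]{\mathfrak{a}}$, and conclude by the strong law of large numbers (the paper phrases this as the ergodic theorem, which is the same statement for i.i.d.\ $L^1$ sequences). Your extra care regarding the $\lfloor n\varepsilon\rfloor/n$ factor and the i.i.d.-versus-exchangeable distinction are refinements of the same argument, not a different one.
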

		
		\begin{proof}
			Each path in $M$ or $T$ from $o$ to $x$ must visit the $k$-th pinch point of $o$ for each $k=1,\dots,\lfloor n \varepsilon \rfloor$. Let $x_1,...,x_{\lfloor n \varepsilon \rfloor}=x$ be these pinch points, and $o=x_0$. Then we have $\hyperref[D]{D}(o,x)=\sum_{0 \leq i \le \lfloor n \varepsilon \rfloor - 1}\hyperref[D]{D}(x_i,x_{i+1})$. By the spatial Markov property (Lemma \ref{spatial_Markov}) and decomposition of paths (Lemma \ref{lem:Decomposition of the path}), $\hyperref[D]{D}(x_i,x_{i+1})$ only depends on the information inside the corresponding bubble and hence these random variables are i.i.d.\ with expectation $0$. By ergodic theorem, since $\E |\hyperref[D]{D}(x_i, x_{i+1})| < (1+\hyperref[def:alpha]{\mathfrak{a}}) \ \E \dT(o,p(o)) \leq \infty$, we have $\hyperref[D]{D}(o,x)/\lfloor n \varepsilon \rfloor \rightarrow 0 $ a.s. and in $L^1$. Therefore, $\hyperref[D]{D}(o,x)/n$ also tends to $0$ a.s. and in $L^1$.
		\end{proof}
		
	\subsection{Local GHP convergence}
		We will use a similar procedure as Section \ref{chap_crt} to show the local GHP convergence. The first step is to control the uniform difference in metrics. To be more specific, we want to prove:
		\begin{theorem}\label{main_4.2}
			Fix $r>0$, as $n\rightarrow \infty$, $\sup_{x,y \in \hyperref[def:V and E]{V}[-rn^2,rn^2]} \frac{1}{n}|\hyperref[D]{D}(x,y)| \rightarrow 0$ in probability. 
		\end{theorem}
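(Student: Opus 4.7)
I decompose $\hyperref[D]{D}(x,y)$ along the pinch-point structure of $o$, reducing the uniform bound to a maximal estimate for a one-dimensional i.i.d.\ random walk. Let $K_n$ be the smallest $k$ such that the $k$-th filled bubble around $o$ contains $\hyperref[def:V and E]{V}[-rn^2,rn^2]$; by the Brownian scaling of Theorem~\ref{conv_to_BM} together with the fact that pinch points of $o$ correspond to simultaneous returns of $H$ and $C$ to their starting values, one obtains $K_n = O_p(n)$. For $x, y \in \hyperref[def:V and E]{V}[-rn^2,rn^2]$, let $j(x) \le j(y)$ denote the indices of the annuli of $o$ containing them. Each $p_k(o)$ is a cut vertex of both $T$ and $M$ (it separates the interior of the $k$-th filled bubble from the exterior), so the $\dM$- and $\dT$-geodesics from $x$ to $y$ both pass through $p_{j(x)}(o), \dots, p_{j(y)-1}(o)$ in order. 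This yields the telescoping identity
\[
\hyperref[D]{D}(x,y) = \hyperref[D]{D}(x, p_{j(x)}(o)) \;+\; \bigl[\hyperref[D]{D}(o, p_{j(y)-1}(o)) - \hyperref[D]{D}(o, p_{j(x)}(o))\bigr] \;+\; \hyperref[D]{D}(p_{j(y)-1}(o), y),
\]
where the bracketed term comes from applying Lemma~\ref{lem:Decomposition of the path} at $o$.

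The middle contribution is bounded by $2\sup_{k \le K_n}|\hyperref[D]{D}(o, p_k(o))|$. By the spatial Markov property (Proposition~\ref{spatial_Markov}) and translation invariance (Proposition~\ref{prop:translation_invariance}), the increments $\hyperref[D]{D}(o,p_k(o)) - \hyperref[D]{D}(o,p_{k-1}(o))$ form an i.i.d.\ sequence with mean zero (by the definition of $\hyperref[def:alpha]{\mathfrak{a}}$) and $L^1$-norm at most $(1+\hyperref[def:alpha]{\mathfrak{a}})\,\E[K] < \infty$ by Proposition~\ref{finite_expectation}. A standard maximal bound --- splitting at $k \le n^{1/2}$ and applying Doob's $L^1$ inequality for the low range and the strong law of large numbers for the rest --- yields $\sup_{k \le Cn}|\hyperref[D]{D}(o, p_k(o))|/n \to 0$ almost surely; combined with $K_n = O_p(n)$, this contribution is $o_p(n)$.

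The two endpoint terms $\hyperref[D]{D}(x, p_{j(x)}(o))$ and $\hyperref[D]{D}(p_{j(y)-1}(o), y)$ cannot simply be bounded by $(1+\hyperref[def:alpha]{\mathfrak{a}})\,\dT$, since within a single annulus the $\dT$-diameter may itself be of order $n$ (the largest annulus in the range has time-length on the order of $n^2$). I would resolve this by iterating the same decomposition with the center shifted to $x$ (resp.\ $y$): using \emph{its} pinch points (via translation invariance) and Lemma~\ref{lem:Decomposition of the path}, $\hyperref[D]{D}(x, p_{j(x)}(o))$ telescopes into a sum of i.i.d.\ mean-zero contributions plus an innermost single-bubble remainder of finite expectation, with the partial-sum piece controlled by the same maximal SLLN bound as before. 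The principal obstacle is making this iteration uniform in the random choices $x, y \in \hyperref[def:V and E]{V}[-rn^2, rn^2]$; I expect this to be handled via the strong spatial Markov property (Corollary~\ref{prop:strong_markov}) together with a quantitative comparison between the pinch-point decomposition rooted at $o$ and the one rooted at $x$.
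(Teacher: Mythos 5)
Your toolkit — pinch-point decomposition, translation invariance, Doob's maximal inequality, LLN, the finite-moment input from Proposition~\ref{finite_expectation} — is exactly the toolkit the paper uses, so the broad strategy is sound. But the way you organize the decomposition (nesting everything around $o$) creates the endpoint-term problem that you correctly identify at the end, and the patch you sketch does not close it.

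The concrete gap is uniformity over the random points $x,y \in \hyperref[def:V and E]{V}[-rn^2,rn^2]$. When you ``shift the center to $x$'' and telescope, you are left with a single-innermost-bubble remainder of the form $\dT(x,p_1(x))$ (or a max over a bounded number of bubbles of $x$). You only know this has finite expectation, hence a Markov tail $\P(\dT(x,p_1(x))>\delta n)=O(1/n)$; but the supremum is over $\sim n^2$ choices of $x$, so a union bound is hopeless with this tail, and the strong Markov property by itself does not supply a faster decay. The paper sidesteps this entirely by \emph{discretizing first}: it fixes a mesh $S_n$ of $O(1)$ evenly-spaced times (spacing $\varepsilon n^2$), so $V_n = \{\hyperref[def:V and E]{V}(t): t\in S_n\}$ is a set of deterministic (after translation invariance) indices of bounded cardinality. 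For each pair $x,y\in V_n$ it decomposes along their \emph{common} pinch point $x_* = p_{A+1}(x) = p_{B+1}(y)$ rather than around $o$, obtaining two Doob-controlled partial-sum terms and two ``last bubble'' terms $\dT(p_A(x),p_{A+1}(x))$, $\dT(p_B(y),p_{B+1}(y))$. The last-bubble terms are not bounded by a single bubble's expectation but by $\sup_{k\le 4Rn}\dT(p_{k-1}(x),p_k(x))$, which is shown to be $o_p(n)$ by a secondary Doob/LLN estimate on the nondecreasing process $k\mapsto \dT(x,p_k(x))$. A union bound over the $O(1)$ mesh pairs is then free, and the passage from mesh points to all of $\hyperref[def:V and E]{V}[-rn^2,rn^2]$ uses only the modulus of continuity of $H$ (hence of $\dT$) coming from Theorem~\ref{conv_to_BM}, since $|\hyperref[D]{D}(x,y)-\hyperref[D]{D}(x',y')|\le(1+\hyperref[def:alpha]{\mathfrak a})(\dT(x,x')+\dT(y,y'))$.

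In short: the missing idea is the discretization, which converts ``uniformity over random $x,y$'' into ``uniformity over $O(1)$ deterministic pairs plus equicontinuity,'' and the missing estimate is the max-over-$k$ control of $\dT(p_{k-1}(\cdot),p_k(\cdot))$ for the last-bubble terms. Without these, the iteration you propose cannot be made uniform, and the proposal as written is incomplete.
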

		
		We will deduce Theorem~\ref{main_4.2} from Proposition~\ref{bound_pinch} and a union-bound argument.
		
		\begin{lemma}\label{Lemma_in_4.2}For $r > 0$ and $p\in (0,1)$, there exists a deterministic $R > 0$ such that for each large enough $n$, it holds with probability at least $p$ that $B_{rn}(T) \subset \hyperref[def:V and E]{V}[-Rn^2,Rn^2]$. Similarly, $\exists R > 0$ such that for each large enough $n$, it holds with probability at least $p$ that $\hyperref[def:V and E]{V}[-rn^2,rn^2] \subset B_{Rn}(T)$. 
		\end{lemma}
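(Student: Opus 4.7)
The plan is to transfer both inclusions to tightness assertions about the rescaled hamburger walk $Y^{(n)}_t = H_{n^2 t}/n$ and then invoke the Skorokhod coupling to a two-sided Brownian motion $L$ furnished by Theorem~\ref{conv_to_BM}. Under the change of variables $\tilde u = u/n^2$ used in Section~\ref{chap_crt}, the event $B_{rn}(T) \subset \hyperref[def:V and E]{V}[-Rn^2, Rn^2]$ is equivalent to $I_{n,r} \subset [-R, R]$, and the event $\hyperref[def:V and E]{V}[-rn^2, rn^2] \subset B_{Rn}(T)$ is equivalent to $[-r, r] \subset I_{n,R}$. Both reformulations are deterministic functionals of $Y^{(n)}$, so the task reduces to producing a deterministic bound $R$ that holds with probability at least $p$ for every large enough $n$.

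For the first reformulation, the inclusion $I_{n,r} \subset [s_r^{(n)}, t_r^{(n)}]$ noted just before Lemma~\ref{lem:inf-times} reduces everything to the hitting times of $-r$. The proof of Lemma~\ref{lem:inf-times}, applied in the Skorokhod coupling, shows that $(s_r^{(n)}, t_r^{(n)}) \to (s_r, t_r)$ almost surely up to arbitrarily small error, and $s_r, t_r$ are almost surely finite by standard properties of two-sided Brownian motion. Hence $\max(|s_r^{(n)}|, |t_r^{(n)}|)$ is tight, and one can choose deterministic $R$ and $n_0$ so that $\mathbb{P}(s_r^{(n)} \geq -R,\, t_r^{(n)} \leq R) \geq p$ for all $n \geq n_0$, which yields the first claim.

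For the second reformulation, for any $\tilde u \in [-r, r]$ the defining formula of $d_n$ gives
\[
d_n(o_n, \scaleproj(\tilde u)) = Y^{(n)}(\tilde u) - 2 \inf_{s \in [\tilde u \wedge 0,\, \tilde u \vee 0]} Y^{(n)}(s) \leq 3 \sup_{s \in [-r, r]} |Y^{(n)}(s)|.
\]
Under the Skorokhod coupling the right-hand side converges almost surely to $3 \sup_{[-r,r]} |L|$, which is almost surely finite, so $\sup_{[-r,r]} |Y^{(n)}|$ is tight. Choosing deterministic $R$ with $\mathbb{P}(3 \sup_{[-r,r]} |Y^{(n)}| \leq R) \geq p$ for all large $n$ gives the second claim, since on this event every $\tilde u \in [-r, r]$ satisfies $d_n(o_n, \scaleproj(\tilde u)) \leq R$.

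The only substantive step in this plan is joint tightness of the hitting times $s_r^{(n)}, t_r^{(n)}$ along the Skorokhod coupling, and this has effectively already been handled by Lemma~\ref{lem:inf-times}; the remaining ingredients are standard continuous-functional tightness statements for Brownian motion, so no serious obstacle is expected.
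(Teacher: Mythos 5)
Your proof is correct and follows essentially the same route as the paper: the first inclusion is reduced, via the hitting-time framing and Lemma~\ref{lem:inf-times} / Lemma~\ref{compact T ball}, to tightness of the rescaled entry/exit times, and the second inclusion is reduced to a supremum bound on the rescaled hamburger walk via the identity $d_n(o_n,\scaleproj(\tilde u)) = Y^{(n)}(\tilde u) - 2\inf_{[\tilde u\wedge 0,\,\tilde u\vee 0]}Y^{(n)}$, exactly as the paper bounds $d_T(o,\hyperref[def:V and E]{V}(t))\le H_t-2\inf H_u$. The only cosmetic difference is that you phrase the second part through the Skorokhod coupling while the paper works directly with convergence in distribution from Theorem~\ref{conv_to_BM}; both are equivalent tightness arguments.
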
 
		
		\begin{proof}
			Recall that $B_r(T)$ (resp. $B_r(\scale{T})$) denotes the ball of radius $n$ centered at root vertex in $T$ (resp. $\scale{T}$). By Lemma \ref{compact T ball}, we know a.s. $\exists$ \textit{random} $R$ and $N$ {such that} $B_r(\scale{T}) \subset \scaleproj([-R,R])$ for all $n \geq N$. Thus for any $p \in (0,1)$, there exists \textit{deterministic} $R$ and $N$ s.t. $\forall n \geq N$, 
			$$\P\left(B_{rn}(T) \subset \hyperref[def:V and E]{V}[-Rn^2,Rn^2]\right)=\P(B_r ( \scale{T}) \subset \scaleproj([-R,R])) \geq p.$$
			On the other side, by Theorem \ref{conv_to_BM}, we know that $(\frac{1}{n} H_{n^2t})_{-r\leq t \leq r}$ converges to the Brownian motion $(L_t)_{-r\leq t \leq r}$ in distribution with respect to the uniform topology. $L_t$ is a.s. bounded in $[-r,r]$, thus for any $p<1$, there is deterministic $R$, such that $\P\left(|L_t| \leq R/6, \, \forall t \in [-r,r] \right) \geq p$. Thus there is $N>0$ s.t. for all $n \geq N$,
			$$\P(|H_t| \leq nR/3, \, \forall t \in [ -rn^2, rn^2] ) \geq p.$$
			Thus $d_T(o,\hyperref[def:V and E]{V}(t))\leq H_t-2\inf_{-rn^2\leq u \leq rn^2}  H_u$ $\Rightarrow$ $\P(d_T(o,x) \leq R, \, \forall x \in \hyperref[def:V and E]{V}[ -rn^2,  rn^2] ) \geq p$.
		\end{proof}
		
		\begin{proposition}\label{proposition_in_4.2}
			Fix $r>0$ and $\varepsilon>0$ and let  
			\[
			S_n:=\{ \lfloor k \varepsilon n^2 \rfloor: k \in \mathbb{Z}, -r \leq k \varepsilon  \leq r\}, \quad V_n:=\{\hyperref[def:V and E]{V}(t): t \in S_n\}.
			\]
			Then as $n \rightarrow \infty$, $\sup_{x,y \in V_n} n^{-1} |\hyperref[D]{D}(x,y)| \rightarrow 0$ in probability. 
		\end{proposition}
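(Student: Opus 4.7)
Since $|V_n| \le 2r/\varepsilon + 1$ is independent of $n$, a union bound reduces the proposition to showing that for each fixed pair of indices $k_1 \ne k_2$ with $k_i \varepsilon \in [-r,r]$, $n^{-1} D(V(\lfloor k_1 \varepsilon n^2 \rfloor), V(\lfloor k_2 \varepsilon n^2 \rfloor)) \to 0$ in probability. By translation invariance (Proposition~\ref{prop:translation_invariance}) and the symmetry $D(x,y) = D(y,x)$, this further reduces to: for each fixed $c > 0$, $n^{-1} D(V(0), V(s_n)) \to 0$ in probability, where $s_n := \lfloor c n^2 \rfloor$.

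My plan is a two-sided bubble decomposition. Let $k^*_n$ be the smallest index $k$ such that the $k$-th smallest reducible interval $[s_k, t_k]$ containing $0$ also contains $s_n$ (equivalently, $t_k \ge s_n$), and let $k'_n$ be the analogous index from the side of $V(s_n)$. These coincide at the common outer pinch $p_{k^*_n}(V(0)) = p_{k'_n}(V(s_n))$. Since the unique exit from any filled bubble is its pinch, both the $M$- and $T$-geodesics from $V(0)$ to $V(s_n)$ must pass, in order, through $p_1(V(0)), \ldots, p_{k^*_n - 1}(V(0)), p_{k'_n - 1}(V(s_n)), \ldots, p_1(V(s_n))$. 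By additivity of $D$ along a common geodesic,
\begin{align*}
D(V(0), V(s_n)) &= \underbrace{\sum_{i=0}^{k^*_n - 2} D(p_i(V(0)), p_{i+1}(V(0)))}_{S_n} + \underbrace{D(p_{k^*_n - 1}(V(0)), p_{k'_n - 1}(V(s_n)))}_{M_n} \\
&\quad + \underbrace{\sum_{j=0}^{k'_n - 2} D(p_j(V(s_n)), p_{j+1}(V(s_n)))}_{S'_n}.
\end{align*}

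For the two sums, iterated spatial Markov (Proposition~\ref{spatial_Markov}) and translation invariance show that the summands in $S_n$ are i.i.d.\ copies of $D(o, p(o))$, which is mean zero by the definition of $\mathfrak{a}$ and has finite first moment via Propositions~\ref{finite_expectation} and~\ref{prop:dist-K}. A standard maximal-SLLN argument yields $\max_{j \le Cn} \bigl|\sum_{i=0}^{j-1} D(p_i(V(0)), p_{i+1}(V(0)))\bigr|/n \to 0$ a.s.\ for each fixed $C$. Because every non-trivial transition ($p_i \ne p_{i+1}$) contributes at least $1$ to $d_T$, and $d_T(V(0), V(s_n))/n$ is bounded in probability by Theorem~\ref{conv_to_BM}, the number of non-zero summands in $S_n$ is bounded in probability by a multiple of $n$. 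Combining these facts, $S_n/n \to 0$ in probability, and symmetrically $S'_n/n \to 0$.

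The main obstacle is the single-term middle $M_n$: the trivial bound $|M_n| \le (1+\mathfrak{a})\, d_T(p_{k^*_n - 1}(V(0)), p_{k'_n - 1}(V(s_n)))$ is only of order $n$, because the outer ring of the $k^*_n$-th bubble may itself have $d_T$-diameter of order $n$. My plan for $M_n$ is to iterate the decomposition inside this outer ring. Its encoding word (obtained from $X[s_{k^*_n}, t_{k^*_n}]$ by deleting the two disjoint inner reducible subwords) is itself reducible and, conditional on the bubble boundary, hosts further i.i.d.\ reducible sub-bubbles by the strong spatial Markov property (Corollary~\ref{prop:strong_markov}). Inserting an intermediate vertex $V(u)$ with $u$ in the gap between the two inner filled bubbles and reapplying the two-sided decomposition to each new pair peels off additional i.i.d.\ mean-zero SLLN contributions and leaves a strictly smaller middle; iterating until the residual outer ring has sublinear $d_T$-diameter drives $M_n/n \to 0$ in probability. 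Combined with the $S_n, S'_n$ estimates, this establishes $D(V(0), V(s_n))/n \to 0$ in probability, completing the proof.
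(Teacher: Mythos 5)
Your setup — reduction by union bound and translation invariance, the two-sided pinch-point decomposition with $k^*_n, k'_n$, additivity of $D$ along pinch points (which lie on both $M$- and $T$-geodesics), and the maximal-SLLN treatment of $S_n, S'_n$ — matches the paper's strategy. The gap is in your treatment of the middle term $M_n$.

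You dismiss the bound $|M_n| \le (1+\mathfrak{a})\, d_T(p_{k^*_n-1}(V(0)), p_{k'_n-1}(V(s_n)))$ as ``only of order $n$,'' and try to repair it by recursively decomposing inside the residual outer bubble. But this bound is in fact $o(n)$ in probability, and proving that is the whole point. Bound the right-hand side (via the triangle inequality through the common outer pinch $x_*$) by $(1+\mathfrak{a})\bigl[d_T(p_{k^*_n-1}(V(0)), x_*) + d_T(p_{k'_n-1}(V(s_n)), x_*)\bigr]$; each term is a single increment $d_T(p_{k-1}(x), p_k(x))$ of the random walk $k \mapsto d_T(x, p_k(x))$, whose i.i.d.\ increments have finite mean $c$. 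Doob's maximal inequality plus the $L^1$-LLN gives $\sup_{k \le Cn} |d_T(x, p_k(x)) - kc| = o(n)$ in probability, which forces $\sup_{k\le Cn} d_T(p_{k-1}(x), p_k(x)) \le c + 2\delta n$. Combined with the bound $k^*_n, k'_n \lesssim n$ (in probability, from $V[-rn^2, rn^2] \subset B_{Rn}(T)$ together with the observation that a positive fraction of consecutive pinch points are distinct), this makes $M_n/n \to 0$ directly. No iteration is needed.

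Your proposed iteration, besides being unnecessary, is not clearly sound: $D$ is additive only through points lying on \emph{both} geodesics, and a generic intermediate vertex $V(u)$ in the gap between the two inner filled bubbles need not lie on either; $|D|$ also does not satisfy a triangle inequality, so ``peeling off'' a vertex in the middle does not yield a valid decomposition of $M_n$. Moreover the claim that the residual outer ring shrinks at a controlled, summable rate under this procedure is asserted but not justified. You should replace the entire $M_n$ analysis with the maximal-inequality bound on single-increment tree distances described above.
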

		
		\begin{proof} 
			For any fixed times $s,t \in S_n$, let $x=\hyperref[def:V and E]{V}(s)$, $y=\hyperref[def:V and E]{V}(t)$. Let $x_*$ be the pinch point corresponding to the smallest reducible interval $[a,b]$ such that $\hyperref[def:V and E]{V}[a,b]$ contains $x$ and $y$. Choose $A , B \in \mathbb N$ such that $x_*  = p_{A+1}(x) = p_{B+1}(y)$.
			
			We can decompose each $M$-geodesic from $x$ to $y$ into paths from $p_k(x) $ to $p_{k+1}(x)$ for $k=0,\dots,A -1$, an $M$-geodesic from $p_A(x)$ to $p_B(y)$, and paths from $p_{k+1}(y)$ to $p_k(y)$ for $k=0,\dots,B-1$. The same is true with $T$ in place of $M$. 
			
			The $M$-graph distance and the $T$-graph distance from $p_A(x)$ to $p_B(y)$ are each bounded above by the $T$-graph distance from $p_A(x)$ to $x_* = p_{A+1}(x)$ plus the $T$-graph distance from $p_B(y)$ to $x_* = p_{B+1}(y)$. Therefore,
			\begin{multline}\label{formula_decomposition}
				\frac{1}{n}|\hyperref[D]{D}(x,y)|\leq \frac{1}{n} \left| \sum_{0 \leq k < A}\hyperref[D]{D}(p_k(x),p_{k+1}(x))\right|+\frac{1}{n} \left|\sum_{0 \leq k < B}\hyperref[D]{D}(p_k(y),p_{k+1}(y))\right| 
				\\+\frac{1+\hyperref[def:alpha]{\mathfrak{a}}}{n}\dT(p_A(x),p_{A+1}(x))+\frac{1+\hyperref[def:alpha]{\mathfrak{a}}}{n}\dT(p_B(y),p_{B+1}(y)) .
			\end{multline}
			
			\begin{figure}[htbp]
				\centering
				\includegraphics[scale=0.24]{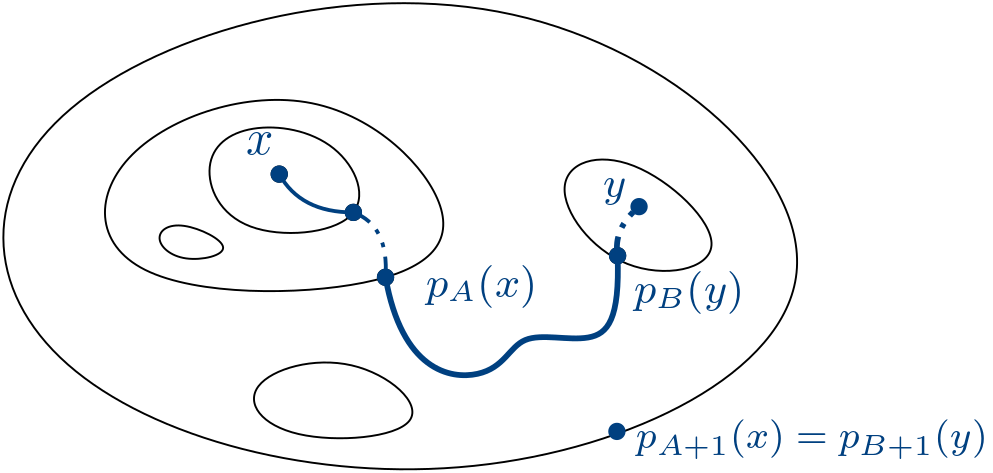}
				\label{fig:distance_between_two_points}
			\end{figure}
            \vspace{0.1in}
			
			For any $p<1$, by Lemma \ref{Lemma_in_4.2}, there is a deterministic $R$, such that for all $n$ large enough, with probability at least $p$, $\hyperref[def:V and E]{V}[-rn^2,rn^2] \subset B_{Rn}(T)$. When $\hyperref[def:V and E]{V}[-rn^2,rn^2] \subset B_{Rn}(T)$, since all of the pinch points between $x$ and $y$ will be visited by the geodesic from $x$ to $y$, the number of \textit{distinct} pinch points between them is smaller than $2Rn$. Note when the one of endpoints of $k$-th and $k+1$-th innermost reducible intervals correspond to a hamburger, $p_k(x)$ and $p_{k+1}(x)$ should be different. By Markov property and large law of numbers, we know with probability tending $1$ that there will be no less than $N/2$ \textit{distinct} pinch points in the first $N$ pinch points, as $N$ goes to infinity. Therefore, the probability that $A,B \leq 4Rn$ tends to $1$ as $n$ goes to infinity.
			
			By translation invariance from Proposition \ref{prop:translation_invariance}, Proposition \ref{bound_pinch} still holds when we replace $o=\hyperref[def:V and E]{V}(0)$ by any other $x=\hyperref[def:V and E]{V}(s)$. Hence we have $\E |n^{-1}{\hyperref[D]{D}(x, p_{4Rn}(x))| } \to 0$ as $n \to \infty$. Since $\hyperref[D]{D}(p_k(x),p_{k+1}(x))$ are i.i.d.\ with $\E \hyperref[D]{D}(p_k(x),p_{k+1}(x))=0$, $\E |\hyperref[D]{D}(p_k(x),p_{k+1}(x))|<\infty$, for any $\delta >0$, we can apply Doob's maximal inequality,
			$$\P\left(\sup_{0\leq i < 4Rn}\frac{1}{n} \left| \sum_{0 \leq k < i}\hyperref[D]{D}(p_k(x),p_{k+1}(x))\right| \geq \delta\right) 
			\leq 
			2\delta^{-1}\E \left|\frac{1}{n}\hyperref[D]{D}(x, p_{4Rn}(x))\right| \to 0.$$
			
			Hence we get the first term on the right side of \eqref{formula_decomposition} converges to $0$ in probability:
		
            \begin{align*}
            	\P\Biggl(\frac{1}{n} \left| \sum_{0 \leq k < A}\hyperref[D]{D}(p_k(x) ,p_{k+1}(x))\right| &\geq \delta\Biggr)\\
            	\leq \P&\left(\frac{1}{n} \left| \sum_{0 \leq k < A}\hyperref[D]{D}(p_k(x),p_{k+1}(x))\right| \geq \delta, A\leq 4Rn\right)+\P(A>4Rn)\\
            	\leq \P&\left(\sup_{0\leq i \leq 4Rn}\frac{1}{n} \left| \sum_{0 \leq k < i}\hyperref[D]{D}(p_k(x),p_{k+1}(x))\right| \geq \delta\right)+\P(A>4Rn) \to 0.
            \end{align*}
            Similarly, the second term on the right side of \eqref{formula_decomposition} converges to $0$ in probability. 
			
			For the last two terms, since $d_T(p_k(x),p_{k+1}(x))$ are i.i.d.\ non-negative random variables with $c:=\E d_T(p_k(x),p_{k+1}(x)) < \infty$, we can apply large law of numbers and Doob's maximal inequality to get that for any $\delta >0$,
			$$\P\left(\sup_{0 \leq k \leq 4Rn} |d_T(x,p_{k}(x))-kc| \geq n\delta\right) \leq \frac{2}{\delta n} \E |d_T(x,p_{4Rn}(x))-4Rnc|\to 0.$$  
            Note when $\sup_{k \leq 4Rn} | d_T(x,p_k(x))-kc| \leq n\delta$, we have for all $k \leq 4Rn$,
            $$ d_T(p_{k-1}(x),p_k(x)) \leq (kc+\delta n)-((k-1)c-\delta n)=c+2\delta n$$  
            Choose $n$ large enough s.t. $n\delta>c$, we get that, as $n \to \infty$,
            $$ \P\left(\sup_{k\leq 4Rn} d_T(p_{k-1}(x),p_k(x)) \leq 3\delta n\right) \leq \P\left(\sup_{k \leq 4Rn} | d_T(o,p_k(x))-kc| \leq n\delta\right) \to 1.$$
            
            Now we can get that the third item in \eqref{formula_decomposition} tends to $0$ in probability:
			$$\P\left(\frac{1}{n}\dT(p_A(x),p_{A+1}(x)) \geq \delta\right)
			\leq \P\left(\frac{1}{n}\sup_{0 \leq k \leq 4Rn} d_T(p_k(x),p_{k+1}(x))\geq \delta\right) + \P(A > 4Rn) \to 0.$$
			It is also true for the last term by symmetry. Thus $\forall \ \delta >0, q>0$, there is $N$ s.t. for any $n\geq N$, 
			$$ \P(|\hyperref[D]{D}(x,y)| \geq n\delta) \leq q, \quad \forall x,y \in M[-rn^2, rn^2].$$
			Note there is constant $C$, s.t $|V_n| \leq C$ for all $n$, we get by a union bound that for all $n \geq N$:
			$$\P(\sup_{x,y \in V_n}n^{-1}|\hyperref[D]{D}(x,y)| \leq \delta) 
			\geq 1-\sum_{x,y \in V_n}\P(n^{-1}|\hyperref[D]{D}(x,y)| \geq \delta)
			\geq 1-C^2q,$$
			which shows $\sup_{x,y \in V_n}n^{-1}|\hyperref[D]{D}(x,y)|$ converges to $0$ in probability.
		\end{proof}
		
    	\begin{proof}[Proof of Theorem \ref{main_4.2}:] 
            Fix $p>0$, $\delta>0$, $r>0$. By Theorem \ref{conv_to_BM}$, (\frac{1}{n} H_{n^2t})_{t\in[-r,r]} \rightarrow (L_t)_{t\in[-r,r]}$ in law with respect to uniform distance. Thus, there is $N=N(r,p,\delta)$, such that for each $n>N$, with probability larger than $1-p$, $$\sup_{t\in[-r,r]} |H_{n^2t}-nL_t| < n\delta.$$ 
            Moreover, for our Brownian motion $L$, there is constant $\varepsilon=\varepsilon(r,p,\delta)>0$, such that with probability at least $1-p$, we have 
            $$|L_s-L_t|<\delta, \ \text{whenever } |s-t|\leq \varepsilon, \ s,t \in [-r,r].$$ 
            Therefore, for each $n>N$, with probability at least $1-2p$,
            $$\sup_{\substack{s,t \in [-r,r] \cap 1/n^2 \mathbb{Z} \\ |s-t|\leq \varepsilon}}d_T(\hyperref[def:V and E]{V}(sn^2),\hyperref[def:V and E]{V}(tn^2)) \leq 4 \sup_{i\in [sn^2,tn^2]} |H_{tn^2} - H_i| \leq 8n\delta+4n\sup_{i \in [s,t]} |L_t-L_i| \leq 12n\delta,$$
            where the first inequality is obtained by the definition of $d_T$.
            
        	Now we use Proposition~\ref{proposition_in_4.2} with $\varepsilon=\varepsilon(r,p,\delta)$ as given above. We get that there is $N_1=N_1(r,p,\varepsilon)=N_1(r,p,\delta)$, such that when $n>N_1$, with probability greater than $1-p$, $$\sup_{x,y \in V_n} \left| \hyperref[D]{D}(x,y) \right|<n\delta.$$ 
            For any fixed times $s,t \in [-rn^2,rn^2]\cap\mathbb{Z}$, there is $s^{\prime},t^{\prime} \in S_n$ such that $|s-s^{\prime}| \leq \varepsilon n^2$,  $|t-t^{\prime}| \leq \varepsilon n^2$. Let $x=\hyperref[def:V and E]{V}(s)$, $y=\hyperref[def:V and E]{V}(t), x^{\prime}=\hyperref[def:V and E]{V}(s^{\prime})$, $y^{\prime}=\hyperref[def:V and E]{V}(t^{\prime})$, we have,
            \begin{align*}         
    			|\hyperref[D]{D}(x,y)|&=\left| \hyperref[def:alpha]{\mathfrak{a}} \dM(x,y)-\dT(x,y)\right| \\
                        &\leq \left|\hyperref[D]{D}(x^{\prime},y^{\prime})\right| + \hyperref[def:alpha]{\mathfrak{a}}\left|dM(x,y)-\dM(x^{\prime},y^{\prime})\right| + \left|\dT(x,y)-\dT(x^{\prime},y^{\prime})\right| \\
                        &\leq \left|\hyperref[D]{D}(x^{\prime},y^{\prime})\right| + \hyperref[def:alpha]{\mathfrak{a}}\left(\d_M(x,x^{\prime})+\dM(y,y^{\prime})\right) + \left(\dT(x,x^{\prime})+\dT(y,y^{\prime})\right) \\
                        &\leq \left|\hyperref[D]{D}(x^{\prime},y^{\prime})\right|+(1+\hyperref[def:alpha]{\mathfrak{a}})\left(\dT(x,x^{\prime})+\dT(y,y^{\prime})\right).
    		\end{align*}

            Therefore, for any $p>0$, $\delta>0$, when $n>N_2=N_2(p,r,\delta):=N+N_1$, with probability $> 1-3p$, 
            $$\sup_{x,y \in \hyperref[def:V and E]{V}[-rn^2, rn^2]} |\hyperref[D]{D}(x,y)| \leq n\delta+(1+\hyperref[def:alpha]{\mathfrak{a}})24n\delta,$$ 
            which implies $\sup_{x,y \in \hyperref[def:V and E]{V}[-rn^2,rn^2]} \frac{1}{n}|\hyperref[D]{D}(x,y)|$ converges to $0$ in law. 
    	\end{proof}
	
    	\begin{corollary}\label{balls_bound}
    		Fix $r>0$, for each $ \varepsilon>0$, with probability tending to 1 as $n\to\infty$, 
    		$$B_{rn}(M) \subset B_{(\hyperref[def:alpha]{\mathfrak{a}} r+\varepsilon)n}(T), \quad \quad B_{rn}(T) \subset B_{(\hyperref[def:alpha]{\mathfrak{a}}^{-1}r+\varepsilon)n}(M).$$
    	\end{corollary}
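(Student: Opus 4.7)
The two inclusions I would handle separately. The second, $B_{rn}(T) \subset B_{(\mathfrak{a}^{-1}r+\varepsilon)n}(M)$, is a direct consequence of Lemma~\ref{Lemma_in_4.2} together with Theorem~\ref{main_4.2}: pick $R$ so that $B_{rn}(T) \subset V[-Rn^2, Rn^2]$ with high probability; then Theorem~\ref{main_4.2} applied on this window (with parameter $\mathfrak{a}\varepsilon$ in place of the arbitrary small parameter) yields $\mathfrak{a}\dM(o,x) \leq \dT(o,x) + \mathfrak{a}\varepsilon n \leq (r+\mathfrak{a}\varepsilon)n$ for every $x \in B_{rn}(T)$, and dividing by $\mathfrak{a}$ gives the claim.

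The first inclusion is more delicate because $\dM \leq \dT$ means $B_{rn}(M)$ is a priori larger than $B_{rn}(T)$ and could contain vertices $V(t)$ with $|t| \gg Rn^2$, so Theorem~\ref{main_4.2} does not apply directly. The key ingredient is the pinch-point bound $\dM(o, V(t)) \geq K(t) - 1$, where $K(t)$ is the smallest $k$ for which $t$ lies in the $k$-th smallest reducible interval containing $0$: every $M$-path from $o$ to $V(t)$ must pass through the distinct cut vertices $p_1(o), \ldots, p_{K(t)-1}(o)$. Hence $V(t) \in B_{rn}(M)$ forces $K(t) \leq rn+1$, giving the deterministic containment $B_{rn}(M) \subset V[s_{rn+1}, t_{rn+1}]$, where $[s_{rn+1}, t_{rn+1}]$ denotes the $(rn+1)$-th smallest reducible interval containing $0$.

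The crux is then to show that $[s_{rn+1}, t_{rn+1}] \subset [-Rn^2, Rn^2]$ with high probability for some deterministic $R$. By Lemma~\ref{lem:Decomposition of the path} together with Proposition~\ref{spatial_Markov}, the quantities $\dT(p_k(o), p_{k+1}(o))$ are i.i.d.\ with finite mean $c := \E\dT(o, p(o)) = \E K$ (Propositions~\ref{finite_expectation} and~\ref{prop:dist-K}), so the strong law of large numbers yields $\dT(o, p_{rn+1}(o)) \leq (cr+1)n$ with high probability. Using the identity $\dT(o, V(t)) = H_t - 2\inf_{[0,t]} H$ at $t = t_{rn+1}+1$, this forces the walk $H$ to remain above $-(cr+1)n$ throughout $[0, t_{rn+1}+1]$. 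Applying Theorem~\ref{conv_to_BM} and the fact that two-sided Brownian motion a.s.\ hits any negative level in finite time gives $\P(t_{rn+1} > Mn^2) \to 0$ as $M \to \infty$, uniformly for $n$ large; a reflected version of the same argument controls $|s_{rn+1}|$.

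Once $B_{rn}(M) \subset V[-Rn^2, Rn^2]$ with high probability is in hand, Theorem~\ref{main_4.2} on this window yields $\dT(o, x) \leq \mathfrak{a}\dM(o, x) + \varepsilon n \leq (\mathfrak{a} r + \varepsilon)n$ for every $x \in B_{rn}(M)$, completing the first inclusion. I expect the main obstacle to be the third paragraph: parlaying the law-of-large-numbers estimate on the tree distance to the $(rn+1)$-st pinch point into a uniform-in-$n$ tail bound on $t_{rn+1}$ by way of the walk identity together with Brownian scaling, since this requires carefully passing between the tree metric and the underlying random walk.
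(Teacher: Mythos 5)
Your diagnosis of the asymmetry is exactly right, and it exposes something the paper glosses over. The paper's own proof asserts $B_{rn}(M) \subset \hyperref[def:V and E]{V}[-Rn^2,Rn^2]$ ``since $d_M \leq d_T$,'' but that inequality gives $B_{rn}(T)\subset B_{rn}(M)$, i.e.\ the wrong direction; Lemma~\ref{Lemma_in_4.2} controls only the $T$-ball. So the a priori containment of the $M$-ball in a window of size $O(n^2)$ is a genuine gap, and your pinch-point mechanism (a geodesic from $o$ to any $x$ in the $k$-th but not $(k-1)$-th filled bubble must cross $p_1(o),\dots,p_{k-1}(o)$, so $M$-distance controls the nesting depth) is the right way to close it. This is a more careful argument than what the paper presents.

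Two points need tightening. First, the inequality $\dM(o,V(t)) \geq K(t)-1$ is not correct as stated: consecutive pinch points can coincide, so a geodesic may pass through fewer than $K(t)-1$ distinct vertices. The paper itself faces this in the proof of Proposition~\ref{proposition_in_4.2} and resolves it by noting that, with probability tending to $1$, at least half of the first $N$ pinch points are distinct; importing that gives $\dM(o,x) \gtrsim K/2$ with high probability, which is enough after replacing $rn+1$ by $2rn+1$ throughout. Second, you should index by the vertex rather than the time: let $\tilde K(x)$ be the smallest $k$ with $x\in \hyperref[def:V and E]{V}[s_k,t_k]$. Working with $K(t)$ is slippery because $V(t)$ could have another representative $V(s)$ with $s$ in a smaller reducible interval, in which case the geodesic need not cross $p_{K(t)-1}(o)$; with $\tilde K(x)$ the cut-point argument is airtight. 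Finally, your third paragraph can be shortened: once the LLN gives $\dT(o,p_{2rn+1}(o)) \leq Cn$ with high probability, the endpoints $s_{2rn+1},t_{2rn+1}+1$ lie in $B_{Cn}(T)$, and Lemma~\ref{Lemma_in_4.2} applied with radius $C$ already yields $[s_{2rn+1},t_{2rn+1}]\subset[-Rn^2,Rn^2]$ with high probability, without re-deriving the walk-level estimate. Your treatment of the second inclusion matches the paper's and is fine.
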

    	
    	\begin{proof}
    		By Lemma~\ref{compare CRT ball and T ball}, since $d_M \leq d_T$, for any $p<1$, with probability at least $1-p$, there exists $R>0$, s.t. $B_{rn}(M), B_{rn}(T) \subset \hyperref[def:V and E]{V}[-Rn^2, Rn^2]$ for large enough $n$. Then use Theorem \ref{main_4.2} to get that with probability at least $1-2p$, for large enough $n$, 
            $$\sup_{x,y \in \hyperref[def:V and E]{V}[-Rn^2,Rn^2]} \frac{1}{n}|\hyperref[def:alpha]{\mathfrak{a}} \dM(x,y)- \dT(x,y)| \leq \varepsilon.$$
            Especially, for root $o=\hyperref[def:V and E]{V}(0)$, $\sup_{y \in \hyperref[def:V and E]{V}[-Rn^2,Rn^2]} |\hyperref[def:alpha]{\mathfrak{a}} \dM(o,y)- \dT(o,y)| \leq n\varepsilon$.
    	\end{proof}

        \begin{proof}[Proof of Theorem \ref{main_thm_in_introduction}]
    	The remaining procedure will be similar to what we do in Section \ref{chap_crt}. We first define a correspondence of $B_{rn}(M)$ to $B_{\hyperref[def:alpha]{\mathfrak{a}}rn}(T)$ by letting one point correspond to its nearest point in the other ball. Then with the help of Proposition \ref{main_4.2}, Lemma \ref{Lemma_in_4.2} and Corollary \ref{balls_bound}, we can show GHP distance of $n^{-2}B_{rn}(M)$ and $n^{-2}B_{\hyperref[def:alpha]{\mathfrak{a}}rn}(T)$ converges to zero in probability, which is similar to the proof of Proposition \ref{discrete tree}.
    	In the end, by dominated convergence theorem, we have the local GHP distance of $(M,\hyperref[def:alpha]{\hyperref[def:alpha]{\mathfrak{a}}}d_M/n,\mu/n^2,o)$ and $(T,d_T/n,\mu/n^2,o)$ converges to zero in probability. Combined with Theorem \ref{main_chap3}, we can prove Theorem \ref{main_thm_in_introduction}.
        \end{proof}

     \section{FK loops}\label{chap_5}
    	In this chapter, we will discuss the macroscopic behavior of FK loops and prove Theorem \ref{thm:main-chap5-loop}. The basic idea is to find a variant of the ``bubbles'' of the previous subsection with the property that each FK loop is entirely contained in some bubble, then control the diameters of all bubbles. 
    	
    	\begin{definition} 
        We call $X(a,b)$ a \textbf{strong reducible word} if (1) $X(a,b)$ is reducible and (2) $X(b)=\texttt{F}, \ X(a)=\texttt{a}$, $a$ and $b$ are matched.
    	\end{definition}

        The pinch point of $X(a,b)$ is called a \textbf{strong pinch point}, and the region $\hyperref[def:Tri]{\mathrm{Tri}}[a,b]$ encoded by $X(a,b)$ is called a \textbf{strong filled bubble}. when we remove all the contained sub-strong filled bubbles (called \textbf{strong holes}) from this region, what remains is called a \textbf{strong bubble}.

        Note $X(a,b)$ is a \textit{strong reducible} word if and only if $X(a+1,b-1)$ is a \textit{reducible} word or an empty word, in conjunction with $X(b)=$\texttt{F}, $X(a)=$\texttt{a}. Essentially, a strong filled bubble is nothing more than an ordinary filled bubble encased within a quadrilateral face, which features a blue diagonal line. See the left side of Figure \ref{fig:strong bubble}. 
        
        An FK loop is described as \textbf{intersecting} a region within the Tutte map if it crosses some Tutte edge in the \textit{interior} (i.e., we exclude the boundary edges) of that region. Similarly, we consider an FK loop to be \textbf{contained} within a region if all the Tutte edges it crosses are within the interior of that region.

    	\begin{proposition}
    		If an FK loop intersects a strong bubble, then it will be contained in this bubble.
    	\end{proposition}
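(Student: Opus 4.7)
The plan is to combine the localization properties of FK loops from the hamburger-cheeseburger bijection with the non-crossing (LIFO) structure of matched pairs. The FK loop $\ell$ corresponds bijectively to a unique $\texttt{F}$-symbol at some time $\tau$ matched to a burger at some time $\sigma$; without loss of generality $\sigma < \tau$. By inspecting the construction in the ``$\texttt{F}$ and loops'' paragraph, $\ell$ is built from the flipped diagonal of the $(\sigma, \tau)$-quadrangle together with a pre-existing monochromatic path between the two other corners of that quadrangle; using Lemma~\ref{lem:F-triangle} (and its cheeseburger analog by symmetry), this path is routed through arcs whose endpoints lie in $[\sigma, \tau]$, so that every Tutte edge crossed by $\ell$ has index in $[\sigma, \tau+1]$.

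Next, since $\ell$ intersects the interior of the strong bubble $B$ associated with the strongly reducible word $X(a, b)$, this localization forces $[\sigma, \tau+1]$ to overlap with the interior of $[a, b+1]$. By the non-crossing property of LIFO matches, the pairs $(\sigma, \tau)$ and $(a, b)$ must then be nested. In the cases $(\sigma, \tau) = (a, b)$ or $(\sigma, \tau) \supsetneq (a, b)$, applying the same localization argument to $(\sigma, \tau)$ shows that any Tutte crossing of $\ell$ lying inside $\mathrm{Tri}[a, b]$ can only be $E(a)$ or $E(b+1)$; but these are boundary Tutte edges of $B$, contradicting the assumption that $\ell$ intersects the interior of $B$.

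We are therefore reduced to $(\sigma, \tau) \subsetneq (a, b)$. If $X(\sigma) = \texttt{a}$, then $X(\sigma, \tau)$ is itself strongly reducible, so $\mathrm{Tri}[\sigma, \tau]$ is a strong sub-filled-bubble of $\mathrm{Tri}[a, b]$, and hence is contained in some strong hole of $B$; by localization $\ell$ then lies entirely inside that strong hole, again contradicting the hypothesis. So $X(\sigma) = \texttt{b}$, meaning $\ell$ is a cheeseburger-type loop living in a non-strong sub-filled-bubble inside $B$. The remaining task, which I expect to be the main obstacle, is to verify that $\ell$ does not cross the boundary Tutte edges $E(a')$ or $E(b'+1)$ of any strong hole $(a', b')$ of $B$ that happens to be nested inside $(\sigma, \tau)$. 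I would handle this by tracking the blue path underlying $\ell$ in the pre-flip dual triangulation: this path stays inside the blue cluster containing $V^*(\sigma)$ and $V^*(\tau)$, and by the LIFO arc-nesting argument used in the proof of Lemma~\ref{lem:F-triangle}, such a blue cluster cannot reach into the interior of a nested hamburger-type strongly reducible sub-region, so all Tutte crossings of $\ell$ remain in the interior of $B$.
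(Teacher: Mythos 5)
Your approach is genuinely different from the paper's and, in its current form, both over-engineered and incomplete. The paper's argument is a one-paragraph topological barrier observation that makes no attempt to track the loop $\ell$ or its defining $\texttt{F}$-symbol. The point is this: for the strong reducible word $X(a,b)$, after the flip the diagonal of the quadrangle $\{\mathrm{Tri}(a),\mathrm{Tri}(b)\}$ becomes a \emph{blue self-loop} at $V^*(a)$ (one checks $V^*(a)=V^*(a{+}1)=V^*(b)=V^*(b{+}1)$ because $X(a)=\texttt{a}$ kills any blue arc at $a$ and the reducibility of $X(a,b)$ and $X(a{+}1,b{-}1)$ identifies the remaining pairs). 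A self-loop in $G^*$ is a Jordan curve in the plane, its interior is precisely the interior of the strong filled bubble, and an FK loop can never cross a $G^*$-edge. So any FK loop meeting the interior of the strong filled bubble is trapped inside it, and the identical argument applied to each strong hole keeps the loop out of every hole. No localization of $\ell$ to its own $[\sigma,\tau]$-interval, no nesting casework, and no identification of which $\texttt{F}$ produced $\ell$ is needed.

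Your proposal has genuine gaps beyond being the longer route. The localization claim — that every Tutte edge crossed by $\ell$ has index in $[\sigma,\tau{+}1]$ — is asserted but not proved, and the justification offered is not correct as stated: the proof of Lemma~\ref{lem:F-triangle} describes nested blue arcs \emph{enclosing} $\sigma$, whose left endpoints lie to the left of $\sigma$, so ``arcs whose endpoints lie in $[\sigma,\tau]$'' is false. (The localization conclusion may still be salvageable via a barrier argument at $(\sigma,\tau)$, but that is exactly the paper's mechanism, which you have not isolated.) In the case $(\sigma,\tau)\supsetneq(a,b)$ you assert that the only Tutte crossings of $\ell$ inside $\mathrm{Tri}[a,b]$ ``can only be $E(a)$ or $E(b{+}1)$''; nothing in the stated localization gives this, and the step reads as circular — it effectively assumes the proposition for the sub-bubble at $(a,b)$. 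Finally, you explicitly flag the sub-case $X(\sigma)=\texttt{b}$ as the main unresolved obstacle; the sketch you give there (``such a blue cluster cannot reach into the interior of a nested hamburger-type strongly reducible sub-region'') is precisely the blue self-loop barrier, so once you supply that argument for the holes you could have used it from the start and discarded the entire localization-and-nesting scaffolding.
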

    	
    	\begin{proof}
    		\begin{figure}[htbp]
    			\centering
    			\includegraphics[scale=0.22]{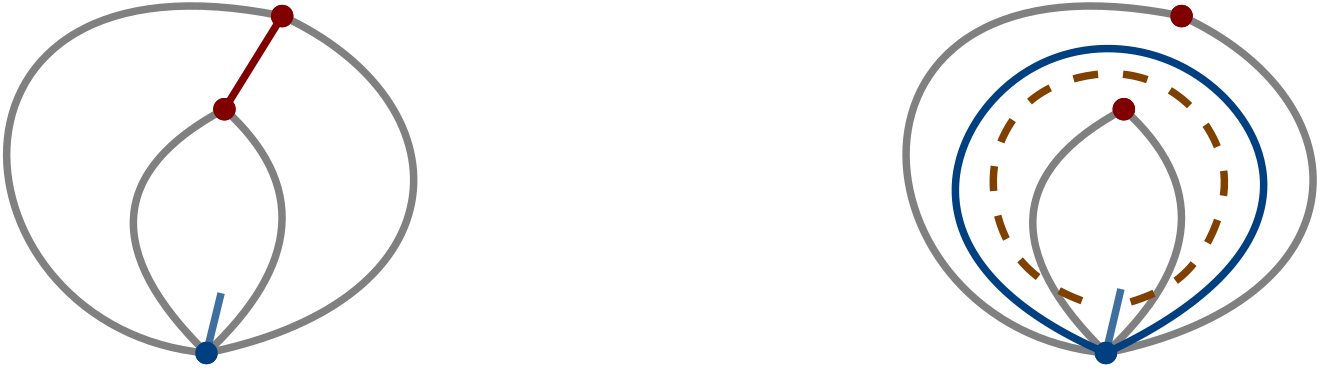}
    			\caption{Recall the \hyperref[Step3 of HC bijection]{\textit{final step}} we construct a planar map from a random walk. For each pair of triangular faces corresponding to \texttt{F} and its matching, we replace the common edge of these two triangles (see the left figure) to its dual (see the right figure).}\label{fig:strong bubble}    
    		\end{figure}
            As explained in previous paragraphs, a blue edge will enclose the inner part of a strong-filled bubble (see the right side of Figure \ref{fig:strong bubble}). An FK loop that intersects with this filled bubble's inner region cannot cross the blue edge. Thus the loop will stay inside the \textit{filled} bubble. This situation also applies to strong holes. If an FK loop intersects with edges outside these holes, it will not cross the blue edge of this hole, thereby totally outside the hole.
    	\end{proof}
    	
    	As a corollary, for an FK loop $\ell$ and the strong bubble $B$ it intersects, we have 
    	\begin{equation}\label{for:5.1}
    		\operatorname{diam} \ell   \leq \sup_{x,y \in B} d_M(x,y)  \leq \sup_{x,y \in B} d_T(x,y) .
    	\end{equation}
        Our method to control the diameter of a strong bubble will be similar the one discussed in Chapter \ref{chap_4.1}.

        For a fixed time $t \in \mathbb{Z}$, let $y$ be the pinch point of $k$-th smallest strong reducible word that the corresponding time interval contains $t$, if it exists. We call $y$ the $k$-th \textbf{strong pinch point} of $x=\hyperref[def:V and E]{V}(t)$, and write $y=s_k(x)$. We also write $s(x)$ for the first strong pinch point and $s_0(x)$ for $x$ itself.

        \begin{lemma}
            $s_{k-1}(x) \neq s_k(x)$.
        \end{lemma}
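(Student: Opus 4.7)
The plan is to unwind the definitions in the hamburger-cheeseburger bijection and exploit the fact that each strong reducible word begins with a hamburger matched to a flexible order, contributing a red arc that separates the relevant Tutte-edge segments above the real axis.

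Set $x = V(t)$ and assume both $s_{k-1}(x)$ and $s_k(x)$ exist. Let $[a_{k-1}, b_{k-1}]$ and $[a_k, b_k]$ denote the $(k-1)$-th and $k$-th smallest strong reducible subwords whose interval contains $t$, so that, by the discussion of (strong) reducible words earlier in the chapter, $s_{k-1}(x) = V(a_{k-1}) = V(b_{k-1}+1)$ and $s_k(x) = V(a_k) = V(b_k+1)$. The first observation is that necessarily $a_k < a_{k-1}$ and $b_{k-1} < b_k$, both strictly: reducible subwords containing a common point are totally nested, and because each burger has a unique matched order, two distinct strong reducible subwords cannot share a left endpoint $a$ (the requirement $X(a) = \texttt{a}$ and the matching property determine the right endpoint $b$ uniquely) nor a right endpoint.

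The main step is carried out in the arc map $\mathbb{A}$. Since $X(a_k) = \texttt{a}$ is a hamburger matched to $X(b_k) = \texttt{F}$, $\mathbb{A}$ contains a red arc joining $a_k$ and $b_k$ above the real axis; together with the real-axis segment $[a_k, b_k]$ this arc encloses a region in the upper half-plane. The segment $[a_k - 1, a_k]$ lies strictly to the left of $a_k$ and is therefore outside the enclosed region, while $[a_{k-1} - 1, a_{k-1}]$ has left endpoint $a_{k-1} - 1 \geq a_k$ and sits inside it (even in the extremal case $a_{k-1} - 1 = a_k$, the segment is immediately to the right of the apex $a_k$ from which the red arc emanates). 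Hence these two segments bound distinct faces of $\mathbb{A}$ above the real axis, and the red dual vertices associated to them---which by construction are $V(a_k)$ and $V(a_{k-1})$ respectively---must be different. This gives $s_k(x) \neq s_{k-1}(x)$.

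The only delicate point I foresee is justifying the strict nesting and cleanly handling the boundary case $a_{k-1} = a_k + 1$; once those bookkeeping items are disposed of, the conclusion is a one-line consequence of the topological separation supplied by the red arc from $a_k$ to $b_k$.
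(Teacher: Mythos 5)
Your proof is correct, and it takes a route that differs from the paper's in a mildly interesting way. The paper works in the Tutte triangulation \emph{after} the flip operation: since $X(b_k) = \texttt{F}$, the red diagonal of the quadrangle $\mathrm{Tri}(a_k)\cup\mathrm{Tri}(b_k)$ is flipped to a blue self-loop at $V^*(a_k)$, and that loop separates $V(a_k)$ from every $V(i)$ with $a_k < i \le b_k$, in particular from $s_{k-1}(x)$. You instead argue directly in the arc map $\hyperref[arc map]{\mathbb{A}}$ and use only the fact that $X(a_k)=\texttt{a}$: the red arc joining $a_k$ and $b_k$ above the real axis puts the segment $[a_k-1,a_k]$ and the segment $[a_{k-1}-1,a_{k-1}]$ in distinct faces, so the corresponding red dual vertices $V(a_k)$ and $V(a_{k-1})$ differ. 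The two arguments are in fact dual to one another (the red arc you use is the very arc whose dual edge the paper flips), but yours is more elementary because it avoids the flip step entirely, and it is slightly more general: it shows consecutive ordinary pinch points differ whenever the outer reducible interval's left endpoint carries a hamburger, which is precisely the unproved aside the paper makes inside the proof of Proposition~\ref{proposition_in_4.2}. Two small points to tidy up. First, for $k=1$ there is no ``$0$-th strong reducible word''; you should adopt the paper's convention $a_0=t$, $b_0=t-1$, so that $s_0(x)=V(a_0)$ and your arc-separation argument applies verbatim with $a_0$ playing the role of $a_{k-1}$. Second, your justification of strict nesting is right (matches are unique, so two strong reducible words cannot share a left or right endpoint), but it is worth stating explicitly that reducible intervals containing a common time are totally ordered by inclusion, which follows from the LIFO structure via Lemma~\ref{lemma_locality}.
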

        \begin{proof}
            Let $X[a,b]$ be the $k$-th strong reducible word of $t$. As we see in Figure \ref{fig:strong bubble}, $s_k(x)=\hyperref[def:V and E]{V}(a)$ will be separated from all other red points $\{\hyperref[def:V and E]{V}(i), a<i\leq b \}$, by the blue edge from $\hyperref[def:V and E]{V^*}(a)$ to itself. 
        \end{proof}

        Let $X[a_i,b_i]$ be the $i$-th \textit{reducible} word of $t$. Set $a_0=t$, $b_0=t-1$. Here, we intentionally make $a_0$ larger than $b_0$ so that $[a_0,b_0]$ is empty. Let $\tau_0=0$, for $k \geq 1$, $$\tau_k:=\min\{ i > \tau_{k-1} \mid a_i-a_{i-1}=-1, b_i-b_{i-1}=1, X(b_i)=\texttt{F}, X(a_i)=\texttt{a} \},$$
        that is, $\tau_k$ is the first time after $\tau_{k-1}$ that $\tau_k$-th reducible word of $x$ is exactly a strong reducible word. From the strong Markov property of the random word, we immediately get the following result:

        \begin{proposition}
            We have $s_k(x)=p_{\tau_k}(x)$, for $k \geq 0$. Moreover, $\tau_k-\tau_{k-1} \overset{d}{=} \tau_1$ are i.i.d variables of geometric distribution with success probability
            $$\P(\tau_1=1)=\P(X(t)=\texttt{F}, X(t-1)=\texttt{a})=\frac{p}{8}.$$
        \end{proposition}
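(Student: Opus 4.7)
The plan is to establish both claims by direct analysis of the $\tau_k$ definition combined with the spatial Markov property (Proposition \ref{spatial_Markov}, Corollary \ref{prop:strong_markov}) and the locality of reducibility (Lemma \ref{lemma_locality}). For the identity $s_k(x) = p_{\tau_k}(x)$, I would show that the indices $\{\tau_k\}_{k \geq 1}$ enumerate exactly those positions in the nested sequence of reducible words containing $t$ whose corresponding reducible word is strong. In one direction, the four conditions in the definition of $\tau_k$ force $X(a_{\tau_k}) = \texttt{a}$ and $X(b_{\tau_k}) = \texttt{F}$, and since $X(a_{\tau_k}, b_{\tau_k})$ is reducible with these outer characters, LIFO matches $a_{\tau_k}$ with $b_{\tau_k}$ — hence $X(a_{\tau_k}, b_{\tau_k})$ is strong reducible. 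Conversely, any strong reducible word $X(a, b)$ containing $t$ equals $(a_i, b_i)$ for some $i$, and deleting the matched outer pair $(\texttt{a}, \texttt{F})$ leaves $X(a+1, b-1)$ reducible; this forces $(a_{i-1}, b_{i-1}) = (a+1, b-1)$, exactly matching the $\tau_k$ conditions. Identifying the $k$-th strong reducible word with the $\tau_k$-th reducible word then gives $s_k(x) = p_{\tau_k}(x)$.

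For the distribution of the increments, the key characterization I would prove is that the $i$-th reducible word of $t$ is a strong extension of the $(i-1)$-th (in the sense of the $\tau_k$ definition) if and only if $X(a_{i-1} - 1) = \texttt{a}$ and $X(b_{i-1} + 1) = \texttt{F}$. The nontrivial \emph{if} direction uses that when the immediately outer characters are $\texttt{a}$ and $\texttt{F}$, the word $X(a_{i-1} - 1, b_{i-1} + 1)$ reduces as $\texttt{a} \cdot \emptyset \cdot \texttt{F} \mapsto \emptyset$ (the flexible order consumes the lone hamburger), so $(a_{i-1} - 1, b_{i-1} + 1)$ is itself reducible and must therefore coincide with $(a_i, b_i)$; the \emph{only if} direction is immediate from the definition.

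Now by Lemma \ref{lemma_locality} the positions $(a_j, b_j)_{j \leq i-1}$ are measurable with respect to $X|_{[a_{i-1}, b_{i-1}]}$, so the characters $X(a_{i-1} - 1)$ and $X(b_{i-1} + 1)$ are independent of the past with law $\theta_p$. Hence the event that step $i$ is a strong extension is Bernoulli with parameter $\theta_p(\texttt{a}) \cdot \theta_p(\texttt{F}) = \tfrac{1}{4} \cdot \tfrac{p}{2} = \tfrac{p}{8}$. When step $i$ fails, the additional characters needed to determine $(a_i, b_i)$ all lie in intervals disjoint from what step $i+1$ will examine, so the strong-extension indicators at successive steps are independent. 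Consequently $\tau_1$ is geometric with parameter $p/8$, and applying Corollary \ref{prop:strong_markov} at the stopping time $\tau_{k-1}$ (which is measurable with respect to the inside of the $\tau_{k-1}$-th filled bubble) gives $\tau_k - \tau_{k-1} \overset{d}{=} \tau_1$ independently of the past. The main delicacy I anticipate is this independence-across-steps argument: because the positions $a_{i-1} - 1$ and $b_{i-1} + 1$ are themselves random, one must condition on $(a_j, b_j)_{j \leq i-1}$ together with $X|_{[a_{i-1}, b_{i-1}]}$ and then invoke the i.i.d.\ structure of $X$ outside $[a_{i-1}, b_{i-1}]$ to extract the claimed Bernoulli behaviour cleanly.
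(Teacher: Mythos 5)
Your proof correctly supplies the details that the paper dispatches in a single sentence (``From the strong Markov property of the random word, we immediately get\ldots''), and it uses exactly the tools the paper cites: locality (Lemma~\ref{lemma_locality}) to make the event $\{(a_{i-1},b_{i-1})=(a,b)\}$ measurable with respect to $X|_{[a,b]}$, the i.i.d.\ structure of $X$ for the conditional Bernoulli$(p/8)$ increments, and Corollary~\ref{prop:strong_markov} for independence across $k$. Your reformulation of the $\tau_k$ condition purely in terms of the two immediately-outer characters $X(a_{i-1}-1)$ and $X(b_{i-1}+1)$ is the right observation to make the geometric law transparent; in the ``if'' direction it is also worth noting explicitly that the one-sided extensions $[a_{i-1}-1,b_{i-1}]$ and $[a_{i-1},b_{i-1}+1]$ cannot be reducible (the outer $\texttt{a}$, resp.\ $\texttt{F}$, has no match inside), which is what forces $(a_i,b_i)=(a_{i-1}-1,b_{i-1}+1)$ with nothing in between.

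There is one small gap, inherited from the paper's own convention rather than introduced by you: your converse step (``this forces $(a_{i-1},b_{i-1})=(a+1,b-1)$'') implicitly assumes $a<t<b$, so that $X(a+1,b-1)$ still contains $t$ and hence is some $(a_{i-1},b_{i-1})$. If the first reducible word of $t$ is itself strong with $t$ on its boundary --- for instance $X(t)X(t+1)=\texttt{aF}$, or $X(t)=\texttt{F}$ matched to an $\texttt{a}$ two or more positions to the left --- then $X(a+1,b-1)$ does not contain $t$, the convention $a_0=t$, $b_0=t-1$ fails to register this word as a $\tau$-step, and $s_k(x)=p_{\tau_k}(x)$ slips by one index (it also collides with the preceding lemma's claim $s_{k-1}(x)\neq s_k(x)$ when $t=a$). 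For indices $i\geq 2$ your argument is airtight since $a_i<a_{i-1}\leq t\leq b_{i-1}<b_i$ is automatic; only $i=1$ is delicate. This edge case is harmless for the downstream macroscopic estimates, but it would need to be excluded by convention or handled separately for the identity to hold verbatim.
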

    
    	\begin{corollary}
    	    Let $o$ be the root, then
    		$\E d_T(o,s(o))<\infty.$
    	\end{corollary}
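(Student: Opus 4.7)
The plan is to decompose the $T$-geodesic from $o$ to $s(o)$ into independent pieces and apply a Wald-type identity. By the previous proposition, $s(o) = p_{\tau_1}(o)$ with $\tau_1$ geometric of success probability $p/8$, so in particular $\E \tau_1 = 8/p < \infty$.

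First, I would apply Lemma \ref{lem:Decomposition of the path} to the geodesic running through the nested pinch points $o = p_0(o), p_1(o), \dots, p_{\tau_1}(o) = s(o)$, which yields
\[
d_T(o, s(o)) \;=\; \sum_{k=1}^{\tau_1} D_k , \qquad \text{where } D_k := d_T(p_{k-1}(o), p_k(o)) .
\]
Next, introduce the indicator $I_k := \mathbf{1}\{\text{the } k\text{-th reducible word of } o \text{ is strongly reducible}\}$, so that $\tau_1 = \inf\{k \geq 1 : I_k = 1\}$. Both $D_k$ and $I_k$ are measurable with respect to the word encoding the $k$-th (ordinary) filled bubble around $o$. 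Applying the spatial Markov property (Proposition~\ref{spatial_Markov}) iteratively along the nested sequence of filled bubbles, the pairs $\{(D_k, I_k)\}_{k \geq 1}$ are i.i.d.

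Although $D_k$ and $I_k$ may be dependent \emph{within} the same bubble, the crucial point is that the event $\{\tau_1 \geq k\} = \{I_1 = \cdots = I_{k-1} = 0\}$ depends only on the first $k-1$ pairs and is therefore independent of $(D_k, I_k)$. Using Tonelli and this independence,
\[
\E\, d_T(o, s(o)) \;=\; \sum_{k=1}^\infty \E\!\left[ D_k \,\mathbf{1}\{\tau_1 \geq k\} \right] \;=\; \E D_1 \cdot \sum_{k=1}^\infty \P(\tau_1 \geq k) \;=\; \E D_1 \cdot \E \tau_1 .
\]
By Propositions~\ref{prop:dist-K} and~\ref{finite_expectation}, $\E D_1 = \E d_T(o, p(o)) \leq \E K < \infty$, and combined with $\E \tau_1 = 8/p < \infty$ this gives the corollary.

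The only delicate point is the independence/i.i.d. claim used to pass to the product $\E D_1 \cdot \E \tau_1$: one must separate the dependence within a single bubble (where $D_k$ and $I_k$ are correlated) from the independence across bubbles (which comes from the i.i.d. structure of the letters combined with Lemma~\ref{lemma_locality} and Proposition~\ref{spatial_Markov}). Everything else is a routine application of Wald's identity to a geometric stopping time with summable summands.
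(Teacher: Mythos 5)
Your proof is correct and follows essentially the same route as the paper, which also decomposes $d_T(o,s(o))$ along the pinch points $p_k(o)$ and invokes Wald's equation with the geometric stopping time $\tau_1$. The only difference is that you spell out the independence structure more carefully (working with the i.i.d.\ pairs $(D_k,I_k)$ so that $\{\tau_1\geq k\}$ is visibly independent of $D_k$), which makes explicit why Wald applies even though $\tau_1$ is not a stopping time for the filtration generated by the $D_k$ alone; the paper leaves this implicit.
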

        \begin{proof}
            Note $d_T(p_{k-1}(o), p_{k}(o))$ are i.i.d with finite expectation and $\tau_1$ is a stopping time, by Wald's equation,
            $\E d_T(o,s(o))= \E d_T(o,p_{\tau_1}(o)) =\E \tau_1 \ \E d_T(o,p(o))=8/p \ \E d_T(o,p(o))<\infty.$
        \end{proof}
    	
    	By Corollary \ref{prop:strong_markov}, the spatial Markov property also holds for a strong bubble: When we condition on the $k$-th strong filled bubble of $o$, the remaining part of the map $M$ will have the same distribution as the original map.
    	
    	What's more, a similar large law of numbers argument as in Proposition \ref{bound_pinch} can also be applied in the setting of a strong pinch point, since the key ingredients of the proof are the finite expectation of $d_T(o,s(o))$ and the spatial Markov property. We get:
    	
    	\begin{proposition}\label{prop:large-law-chap5}
    		For $ \varepsilon >0$, let $x$ be the $\lfloor n\varepsilon \rfloor$-th strong pinch point of $o$, we have:
    		$$\frac{1}{n}d_T(o,x)\rightarrow \E d_T(o,s(o)) \ \text{ a.s. and in }L^1.$$
    	\end{proposition}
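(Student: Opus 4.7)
The plan is to reduce the claim to a classical law of large numbers applied to an i.i.d.\ sequence of $T$-distances between consecutive strong pinch points, in direct analogy with the proof of Proposition \ref{bound_pinch}. Setting $N := \lfloor n\varepsilon \rfloor$ and $x_k := s_k(o)$, my first step is to establish the telescoping identity
\[ d_T(o, x_N) = \sum_{k=1}^{N} d_T(x_{k-1}, x_k). \]
Since each strong pinch point is in particular the ordinary pinch point $p_{\tau_k}(o)$, Lemma \ref{lem:Decomposition of the path} forces every $T$-geodesic from $o$ to $x_N$ to pass through each intermediate $x_k$ in order, and the lemma immediately preceding the proposition ensures $x_{k-1} \neq x_k$, so no increment is trivial.

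Next, I would argue that the increments $\xi_k := d_T(x_{k-1}, x_k)$ are i.i.d.\ with the common law of $d_T(o, s(o))$. Applying Corollary \ref{prop:strong_markov} to the stopping index that identifies the $(k-1)$-st strong filled bubble of $o$, once we collapse that bubble the remaining decorated map has the original law and is independent of the collapsed bubble. Translation invariance (Proposition \ref{prop:translation_invariance}) identifies the distribution of $\xi_k$ with that of $d_T(o, s(o))$, and the path decomposition lemma guarantees that $\xi_k$ is measurable with respect to the geometry of the $k$-th strong bubble, yielding the required mutual independence.

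With the i.i.d.\ structure in hand, I would invoke Kolmogorov's strong law and the $L^1$ law of large numbers, using $\E \xi_1 = \E d_T(o, s(o)) < \infty$ from the corollary preceding the proposition, to conclude
\[ \frac{1}{N} \sum_{k=1}^{N} \xi_k \longrightarrow \E d_T(o, s(o)) \quad \text{a.s.\ and in } L^1, \]
which, after combining with $N/n \to \varepsilon$, gives the stated convergence of $\frac{1}{n} d_T(o, x)$ (up to absorbing the resulting factor of $\varepsilon$).

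I expect the only point requiring technical care to be the second step. Corollary \ref{prop:strong_markov} as stated handles only a single finite stopping time, so one must iterate it (or equivalently apply it to the random index of the $k$-th innermost strong bubble of $o$) and check that these indices really do constitute valid stopping times for the underlying i.i.d.\ word $X$; this uses only the locality statement of Lemma \ref{lemma_locality} together with the definition of ``strong reducible''. Once independence is secured, the rest of the argument is a routine application of classical limit theorems.
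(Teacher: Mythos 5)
Your proposal is correct and takes essentially the same route as the paper. The paper does not give a separate proof of this proposition; it simply invokes the argument of Proposition~\ref{bound_pinch}, which is precisely your three-step scheme: telescope $d_T(o,s_N(o))=\sum_{k=1}^N d_T(s_{k-1}(o),s_k(o))$ via the geodesic decomposition through (strong) pinch points, deduce the i.i.d.\ structure of the increments from the strong spatial Markov property (Corollary~\ref{prop:strong_markov}, iterated), translation invariance and the locality of the word, and close with the strong and $L^1$ laws of large numbers together with the finite first moment established just before the proposition. Your remark about the factor of $\varepsilon$ is also apt: with the normalization $N=\lfloor n\varepsilon\rfloor$ the limit as displayed should carry a factor of $\varepsilon$ (or the left side should be divided by $\lfloor n\varepsilon\rfloor$ instead of $n$); this is consistent with how the paper actually uses the result, namely in Lemma~\ref{lem:chap5_distance_between_strong_pinch_points} with $\varepsilon=1$, so the discrepancy is a harmless typo rather than a substantive gap.
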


        \begin{lemma}\label{lem:chap5_distance_between_strong_pinch_points}
            Fix any $\delta >0$, let
            $$E_n:=\{ \forall k \leq n, d_T(s_{k-1}(o),s_{k}(o))<\delta n\}.$$
            Then $\P(E_n) \to 1$, as $n \to \infty$. 
        \end{lemma}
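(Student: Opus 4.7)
The plan is to reduce the statement to a maximum-of-i.i.d.-variables estimate. Set $Y_k := d_T(s_{k-1}(o), s_k(o))$ for $k \geq 1$. The first step is to argue that $(Y_k)_{k \geq 1}$ is an i.i.d.\ sequence with $\E Y_1 < \infty$. The finiteness of the expectation is already the corollary preceding Proposition \ref{prop:large-law-chap5}. For the i.i.d.\ property, I would iterate the strong spatial Markov property (Corollary \ref{prop:strong_markov}) for strong bubbles, noting that $Y_k$ is measurable with respect to the $k$-th strong bubble of $o$: indeed, the strong pinch points separate the map in the sense of Lemma \ref{lem:Decomposition of the path}, so any $T$-geodesic from $s_{k-1}(o)$ to $s_k(o)$ must remain inside the $k$-th strong bubble, and the law of this bubble is the same as that of the first one by translation invariance (Proposition \ref{prop:translation_invariance}).

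The second step is a simple union bound:
\[
\P(E_n^c) = \P\Bigl(\max_{1 \leq k \leq n} Y_k \geq \delta n\Bigr) \leq n \, \P(Y_1 \geq \delta n).
\]
The final step is the standard tail decay: since $\E Y_1 < \infty$, dominated convergence gives $t \, \P(Y_1 \geq t) \leq \E\bigl[Y_1 \mathbf{1}_{\{Y_1 \geq t\}}\bigr] \to 0$ as $t \to \infty$. Applied with $t = \delta n$, this yields $n \, \P(Y_1 \geq \delta n) = \delta^{-1}\, (\delta n)\, \P(Y_1 \geq \delta n) \to 0$, so $\P(E_n) \to 1$.

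There is no serious obstacle. The only point to watch is that a naive Markov inequality bound $n\,\P(Y_1 \geq \delta n) \leq \E Y_1/\delta$ is not sufficient, so one must invoke the finer $o(1/t)$ tail estimate that integrability of $Y_1$ provides.
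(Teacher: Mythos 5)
Your argument is correct and takes a genuinely different, and more elementary, route than the paper. The paper first establishes an $L^1$ law of large numbers for $k\mapsto d_T(o,s_k(o))$ (Proposition~\ref{prop:large-law-chap5}), then applies Doob's maximal inequality to the centred partial sums $d_T(o,s_k(o))-kc$ with $c=\E d_T(o,s(o))$ to obtain $\P\bigl(\sup_{k\le n}|d_T(o,s_k(o))-kc|\ge n\delta\bigr)\to 0$, and finally extracts the bound on each increment $d_T(s_{k-1}(o),s_k(o))$ by taking differences of the two-sided bracketing. You instead go directly: once $Y_k:=d_T(s_{k-1}(o),s_k(o))$ are known to be identically distributed with $\E Y_1<\infty$ (which, as you say, follows from the strong spatial Markov property for strong bubbles, translation invariance, and the decomposition of Lemma~\ref{lem:Decomposition of the path}), the union bound $\P(\max_{k\le n}Y_k\ge\delta n)\le n\,\P(Y_1\ge\delta n)$ and the integrability estimate $t\,\P(Y_1\ge t)\le \E[Y_1\mathbf{1}_{\{Y_1\ge t\}}]\to 0$ close the argument. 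A notable structural difference is that your route uses only that the $Y_k$ are identically distributed --- independence is irrelevant to the union bound --- whereas the paper's route genuinely needs the i.i.d.\ structure for both the strong law and Doob's inequality. You are also right to flag that the crude Markov bound $n\,\P(Y_1\ge\delta n)\le\E Y_1/\delta$ is insufficient and that one must invoke the sharper $o(1/t)$ tail decay coming from dominated convergence; that is the only non-automatic step. Both proofs are sound; yours is shorter and hypothesizes less.
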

        \begin{proof}
            Let $c:=\E d_T(o,s(o))<\infty$. By Doob's maximal inequality and Proposition \ref{prop:large-law-chap5}, 
            $$ \P\left(\sup_{k \leq n} | d_T(o,s_k(o))-kc| \geq n\delta\right) \leq 2\delta^{-1}\E \left|\frac{d_T(o,s_n(o))}{n}-c\right| \to 0,$$
            Note when $\sup_{k \leq n} | d_T(o,s_k(o))-kc| \leq n\delta$, we have for all $k \leq n$,
            $$ d_T(s_{k-1}(o),s_k(o)) \leq (kc+\delta n)-((k-1)c-\delta n)=c+2\delta n.$$  
            Choose $n$ large enough s.t. $n\delta>c$, we get that, as $n \to \infty$,
            $$ \P\left(\sup_{k\leq n} d_T(s_{k-1}(o),s_k(o)) \leq 3\delta n\right) \leq \P\left(\sup_{k \leq n} | d_T(o,s_k(o))-kc| \leq n\delta\right) \to 1.$$
        \end{proof}

        By translation invariance, we can substitute $o=\hyperref[def:V and E]{V}(0)$ with $x=\hyperref[def:V and E]{V}(t)$ for any deterministic $t$, and analogous results will persist.
    	
    	Next, we will discuss a theorem that, when combined with Inequality (\ref{for:5.1}), reveals the degenerate result Theorem \ref{thm:main-chap5-loop}.
    	
    	\begin{theorem}\label{thm:main_5.1}
    		For deterministic $r>0$, let $\mathcal{B}_n$ be the set of all strong bubbles which intersect $\hyperref[def:Tri]{\mathrm{Tri}}[-rn^2,rn^2]$, then as $n \rightarrow \infty$:
    		$$  \sup_{B \in \mathcal{B}_n} \sup_{x,y \in B}\frac{1}{n} \ d_T(x,y) \rightarrow 0, \quad \text{in probability.} $$    
    	\end{theorem}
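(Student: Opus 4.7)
The plan is to reduce the $d_T$-diameter of each strong bubble to a distance-from-pinch quantity, show that the associated time indices stay in an $O(n^2)$-sized window, and then control the maximum via translation invariance plus a tail estimate.

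First, for $B\in\mathcal B_n$ with pinch $s_B$, every $\hyperref[def:V and E]{V}(t)$ lying in the non-hole part of $B$ satisfies $s(\hyperref[def:V and E]{V}(t))=s_B$, because $B$ is then the innermost strong filled bubble containing $\hyperref[def:Tri]{\mathrm{Tri}}(t)$. Since the boundary of the strong filled bubble consists only of Tutte and blue edges -- none of which lie in $T$ -- the $T$-subtree inside the filled bubble is connected to the rest of $T$ only through $s_B$. Hence, by the triangle inequality through $s_B$,
\[
\sup_{x,y\in B}d_T(x,y)\;\leq\;2\sup_{t\, :\, \hyperref[def:V and E]{V}(t) \in B \setminus \text{strong holes}} d_T(\hyperref[def:V and E]{V}(t),s(\hyperref[def:V and E]{V}(t))),
\]
and taking a supremum over $B\in\mathcal B_n$ reduces the problem to showing that $\sup_{t\in T^*}d_T(\hyperref[def:V and E]{V}(t),s(\hyperref[def:V and E]{V}(t)))/n \to 0$ in probability, where $T^*\subset\mathbb Z$ is the set of $t$ whose innermost strong bubble lies in $\mathcal B_n$.

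A standard Markov/union-bound argument, combined with translation invariance (Proposition \ref{prop:translation_invariance}) and tail estimates on the length of the innermost strong reducible interval containing a fixed point (which follow from Proposition \ref{finite_expectation} together with the spatial Markov property and the geometric distribution of $\tau_1$), produces a deterministic $R\geq r$ with $T^*\subset[-Rn^2,Rn^2]$ with probability tending to $1$. It therefore suffices to bound the maximum of $d_T(\hyperref[def:V and E]{V}(t),s(\hyperref[def:V and E]{V}(t)))$ over the $O(n^2)$ integer times in $[-Rn^2,Rn^2]$. By Proposition \ref{prop:translation_invariance} each term is distributed as $d_T(o,s(o))$, and by Corollary \ref{prop:strong_markov} terms associated with $t$'s lying in different innermost strong bubbles are mutually independent.

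A union bound over the $O(n^2)$ indices then yields the desired $o(n)$ control provided $\P(d_T(o,s(o))>u)=o(u^{-2})$ as $u\to\infty$. Writing $d_T(o,s(o))=\sum_{i=1}^{\tau_1}K_i$ as a geometric sum of i.i.d.\ copies $K_i$ of the quantity $K$ from Proposition \ref{finite_expectation}, this reduces to a sufficiently strong tail bound on $K$ for $q>4$. Establishing such a bound is the main obstacle; I expect a direct analysis of the inventory walk in the spirit of \cite{she16} to give (stretched-)exponential tails for $K$ when $p>1/2$, which would be more than enough. An alternative that sidesteps the tail estimate is to cover $[-Rn^2,Rn^2]$ by $O(n)$ anchor times spaced by $n$, apply the strong spatial Markov property at each anchor, and invoke a quantitative refinement of Lemma \ref{lem:chap5_distance_between_strong_pinch_points} obtained by upgrading the $L^1$ convergence in Proposition \ref{prop:large-law-chap5} to a quantitative rate, then union-bound over the anchors.
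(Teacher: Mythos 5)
Your step (1), bounding the bubble diameter by $2\sup_t d_T(\hyperref[def:V and E]{V}(t),s(\hyperref[def:V and E]{V}(t)))$ via the pinch point $s_B$, is a reasonable reduction and is close in spirit to what the paper actually does (the paper also routes through $s_B$ and the meeting point $z$ of the two geodesics to $s_B$). Step (2), finding $R$ so that the relevant times lie in $[-Rn^2,Rn^2]$ with high probability, is also essentially what Lemma~\ref{lem:intersect_contained_loop} does.

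The genuine gap is in step (3)--(4), and you flag it yourself. You propose a union bound over $O(n^2)$ time indices, which requires a tail bound of the form $\P(d_T(o,s(o))>u)=o(u^{-2})$; equivalently, roughly a second moment for $d_T(o,s(o))$ (or for $K$). The only moment the paper establishes is the first one ($\E K<\infty$ in Proposition~\ref{finite_expectation}, hence $\E d_T(o,s(o))<\infty$ via Wald), and that gives only $\P(d_T(o,s(o))>u)=o(u^{-1})$ by Markov --- not enough to beat the $n^2$ union bound. Your alternative route with $O(n)$ anchors has the same issue: you would still need a quantitative rate in Proposition~\ref{prop:large-law-chap5} (e.g.\ a variance estimate), which the paper does not supply. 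Whether $K$ in fact has exponential or stretched-exponential tails for $p>1/2$ is plausible but unproved in this paper, so your proposal as written does not close.

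The paper's proof is designed precisely to avoid any moment beyond the first. It uses only $O(1)$ anchor times $S_n=\lfloor\varepsilon n^2\rfloor\mathbb Z\cap[-Rn^2,Rn^2]$ (a number depending on $\varepsilon$ but not on $n$), so the union bound is over $1+R/\varepsilon$ events, each controlled by Lemma~\ref{lem:chap5_distance_between_strong_pinch_points} (which gives $o(1)$ probability without a rate). To pass from a general time $t\in[-Rn^2,rn^2]$ to a nearby anchor $a_0$, it uses the modulus of continuity of $H$ coming from the Brownian scaling limit (event $\mathfrak{B}_n$) plus a midpoint argument on the geodesic to $s_B$, which produces a $k\le\delta n$ and a large increment $d_T(s_{k-1}(x_0),s_k(x_0))>\delta n$ at the anchor $x_0$, contradicting $\mathfrak{C}_n$. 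This is the key structural idea your proposal is missing, and it is what buys the theorem from $L^1$ data alone.
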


        \begin{lemma}\label{lem:intersect_contained_loop}
            For deterministic $r>0$, $\exists \ R=R(r)$ such that  as $n \to \infty$, 
            $$ \P\left( \ \forall B \in \mathcal{B}_n, \ B \subset \hyperref[def:Tri]{\mathrm{Tri}}[-Rn^2,Rn^2] \ \right) \to 1.$$
        \end{lemma}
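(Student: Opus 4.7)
My plan is to reduce the claim to a property of the underlying walk $H$ (equivalently $C$, since they converge to the same Brownian motion in the $q>4$ regime) and then apply Theorem~\ref{conv_to_BM}. The first step will be a structural observation: for any strong bubble $B$ with underlying strong reducible word $X[a,b]$, the endpoints $a$ and $b$ are always naked points of $B$. This is because $X(a)=\texttt{a}$ is uniquely matched to $X(b)=\texttt{F}$ under LIFO, so no strict sub-strong-reducible of $[a,b]$ can have either $a$ or $b$ as an endpoint. Consequently the index extent of $B$ equals $b-a$, and the inclusion $B\subset\mathrm{Tri}[-Rn^2,Rn^2]$ becomes equivalent to $[a,b]\subset[-Rn^2,Rn^2]$.

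Under this reduction the bad event becomes: there exists a strong reducible word $X[a,b]$ with $[a,b]\cap[-rn^2,rn^2]\ne\emptyset$ but $[a,b]\not\subset[-Rn^2,Rn^2]$. Using Proposition~\ref{prop:translation_invariance} and symmetry I will reduce to bounding the probability of a ``left escape'' ($a<-Rn^2$ and $b\ge -rn^2$). Reducibility of $X[a,b]$ forces $H_a=H_b=\min_{v\in[a,b]}H_v$ and $H_v\ge H_a$ on the whole interval; in particular, the minimum of $H$ over $[-Rn^2,rn^2]$ must be attained somewhere in $[-rn^2,rn^2]$ rather than in $[-Rn^2,-rn^2]$. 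After rescaling by Theorem~\ref{conv_to_BM}, this discrete event is controlled in the limit $n\to\infty$ by the Brownian event $\{\min_{[-r,r]}L\le\min_{[-R,-r]}L\}$. A standard scaling argument (conditioning on $L_{-r}$ and using that the minima of the two post-$L_{-r}$ independent Brownian pieces are of order $\sqrt{r}$ and $\sqrt{R-r}$ respectively) bounds this probability by $C\sqrt{r/(R-r)}$, which can be made arbitrarily small by choosing $R$ large relative to $r$.

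The hard part will be making this Brownian comparison sufficiently quantitative. Theorem~\ref{conv_to_BM} only provides weak convergence on compact intervals, whereas the bad event concerns the \emph{existence} of a strong reducible word satisfying specific walk \emph{and} letter conditions (the endpoint constraints $X(a)=\texttt{a}$, $X(b)=\texttt{F}$ together with reducibility of $X[a+1,b-1]$). I plan to first fix $R$ large enough so that the limiting Brownian event has probability smaller than any prescribed $\varepsilon$, and then transfer this bound to the discrete setting for all sufficiently large $n$ via a tightness argument; the endpoint letter conditions are local and independent of the bulk walk, so they should only contribute a benign multiplicative constant. If subtler control is needed to rule out many nested near-bad configurations, I expect a Paley--Zygmund second-moment estimate over $b\in[-rn^2,rn^2]$ with $X(b)=\texttt{F}$ to suffice, exploiting the nested forest structure of strong reducibles to avoid double-counting.
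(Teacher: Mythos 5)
There is a genuine gap, and it is at the very first reduction step. You translate the bad event into ``there exists a strong reducible word $X[a,b]$ with $[a,b]\cap[-rn^2,rn^2]\ne\emptyset$ but $[a,b]\not\subset[-Rn^2,Rn^2]$.'' But this event has probability one for \emph{every} $R$: the strong reducible words containing $0$ form an infinite increasing chain whose union is all of $\mathbb{Z}$ (there are infinitely many reducible intervals around any point, as observed after Proposition~\ref{finite_expectation}, and a fixed fraction of these are strong), so some strong reducible word containing $0$ always escapes $[-Rn^2,Rn^2]$. Your structural observation that the endpoint triangles $\mathrm{Tri}(a),\mathrm{Tri}(b)$ are naked is correct, and it correctly converts $B\subset\mathrm{Tri}[-Rn^2,Rn^2]$ into $[a,b]\subset[-Rn^2,Rn^2]$; the error is in the \emph{other} direction: $B\cap\mathrm{Tri}[-rn^2,rn^2]\ne\emptyset$ is strictly stronger than $[a,b]\cap[-rn^2,rn^2]\ne\emptyset$, because a large strong reducible word whose intersection with $[-rn^2,rn^2]$ lies entirely inside one of its strong holes contributes a bubble $B$ that does \emph{not} meet $\mathrm{Tri}[-rn^2,rn^2]$. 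The usable reduction is: since strong reducible words form a laminar family, every bubble in $\mathcal{B}_n$ is contained in $\mathrm{Tri}[a_*,b_*]$, where $[a_*,b_*]$ is the \emph{innermost} strong reducible word containing $[-rn^2,rn^2]$; so the lemma is exactly the statement that $[a_*,b_*]\subset[-Rn^2,Rn^2]$ with high probability.

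Even after this is fixed, the walk argument you sketch needs more care than you allow. Your minimum comparison ``$\min_{[-Rn^2,rn^2]}H$ is attained in $[-rn^2,rn^2]$'' only follows when the right endpoint satisfies $b\le rn^2$; when $b>rn^2$ (which is always the case for $[a_*,b_*]$, since it must contain $rn^2$) the reducibility constraint $H_v\ge H_a$ gives no comparison between the two pieces of $[-Rn^2,rn^2]$. Controlling $b_*$ from above requires showing that with high probability some strong reducible word with $a\le -rn^2$ closes before time $Rn^2$, which needs the letter conditions $X(a)=\texttt{a}$, $X(b)=\texttt{F}$ and the matching structure, not just a Brownian minimum bound. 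The paper takes a quite different route: it does not work with the walk $H$ or the Brownian limit for this lemma at all. It applies Lemma~\ref{Lemma_in_4.2} to put $V[-rn^2,rn^2]$ inside a tree ball $B_{r_1n}(T)$, then uses the law of large numbers for consecutive strong pinch point distances (Proposition~\ref{prop:large-law-chap5} and Lemma~\ref{lem:chap5_distance_between_strong_pinch_points}) to show that the first strong pinch point of $o$ exiting $B_{r_1n}(T)$ is still within $B_{2r_1n}(T)$, and finally applies Lemma~\ref{Lemma_in_4.2} in reverse to pull this back to the index interval $[-Rn^2,Rn^2]$. In effect the paper routes the escape control through tree distance and the i.i.d.\ increment structure of strong pinch points, which sidesteps the case analysis that your direct walk argument has to confront.
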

        \begin{proof}
            By Lemma \ref{Lemma_in_4.2}, there exists deterministic $r_1=r_1(r)$, such that for any $\delta>0$, when $n$ large enough, 
            \begin{equation}\label{eqn:prob1}
               \P\left(\hyperref[def:V and E]{V}[-rn^2,rn^2] \subset B_{r_1 n}(T)\right) \geq 1-\delta.
            \end{equation}
            We also know from Lemma \ref{lem:chap5_distance_between_strong_pinch_points} that, when $n$ large enough, 
            \begin{equation}\label{eqn:prob2}
                \P\left(\sup_{k \leq r_1n} d_T(s_{k-1}(o),s_k(o)) \leq r_1 n\right) \geq 1-\delta.
            \end{equation}
            Assume $\hyperref[def:V and E]{V}[-rn^2,rn^2] \subset B_{r_1 n}(T)$ and $\sup_{k \leq r_1n+1} d_T(s_{k-1}(o),s_k(o)) \leq r_1 n$, let $s_k(o)$ be the first strong pinch point of $o$ which is outside of $B_{r_1 n}(T)$, then 
            $$ r_1 n \geq d_T(o,s_{k-1}(o))=\sum_{i < k} d_T(s_{i-1}(o), s_i(o)) \geq k-1 \Rightarrow k \leq r_1n+1 \Rightarrow d_T(s_{k-1}(o),s_k(o)) \leq r_1 n.$$
            
            Besides, we know all the loops which intersect the inner part of $k$-th filled strong bubble of $o$ will be totally contained in this filled bubble. We also have $d_T(o,s_k(o)) \leq 2 r_1 n$, which means the $k$-th filled strong bubble is contained in $B_{2r_1n}$. Use the proof in Lemma \ref{Lemma_in_4.2} again, we know there is $R=R(r_1)=R(r)$ such that  when $n$ large enough, 
            \begin{equation}\label{eqn:prob3}
            \P\left( \proj_H^{-1}(B_{2 r_1 n}(T)) \subset [Rn^2,Rn^2] \right) \geq \P\left( B_{3 r_1 n}(T) \subset \hyperref[def:V and E]{V}[-Rn^2,Rn^2]\right) \geq 1-\delta.
            \end{equation}
            This implies the endpoints of interval $[a,b]$ corresponding to the $k$-th filled strong bubble are contained in $[-Rn^2, Rn^2]$. So in fact the whole interval $[a,b]$ is contained in $[-Rn^2, Rn^2]$.

            Combined with \eqref{eqn:prob1}, \eqref{eqn:prob2} and \eqref{eqn:prob3}, we know $\exists R=R(r)$, for any $\delta>0$, when $n$ large enough, $\P\left( \forall B \in \mathcal{B}_n, B \subset \hyperref[def:Tri]{\mathrm{Tri}}[-Rn^2,Rn^2]\right)\geq 1-3\delta.$
        \end{proof}
    	
    	\begin{proof}[Proof of Theorem \ref{thm:main_5.1}]
            With the help of Lemma \ref{lem:intersect_contained_loop}, it suffices to prove for any fixed $R>0$,  
            $$ \sup_{B \subset \hyperref[def:Tri]{\mathrm{Tri}}[-Rn^2,Rn^2]} \sup_{x,y \in B} \frac{1}{n} \ d_T(x,y) \to 0 \quad \text{in probability.}$$
            
            We know for any $\delta >0$ and $p>0$, there is $\varepsilon=\varepsilon(R,p,\delta)$, when let event 
            $$\mathfrak{B}_n:=\{ d_T(\hyperref[def:V and E]{V}(s),\hyperref[def:V and E]{V}(t))\leq n \delta, \ \forall |s-t|\leq \varepsilon n^2, s,t \in [-Rn^2,Rn^2] \},$$
            then $\P(\mathfrak{B}_n)\geq 1-p$ when $n$ large enough. Assuming $\mathfrak{B}_n$, consider the event   
            $$\mathfrak{A}_n:=\{\exists  a,b \in [-Rn^2,Rn^2], \hyperref[def:V and E]{V}(a), \hyperref[def:V and E]{V}(b) \in \text{ same strong bubble}, d_T(\hyperref[def:V and E]{V}(a),\hyperref[def:V and E]{V}(b)) > 8 \delta n\}.$$

            \begin{figure}[htbp]
    			\centering  			\includegraphics[scale=0.15]{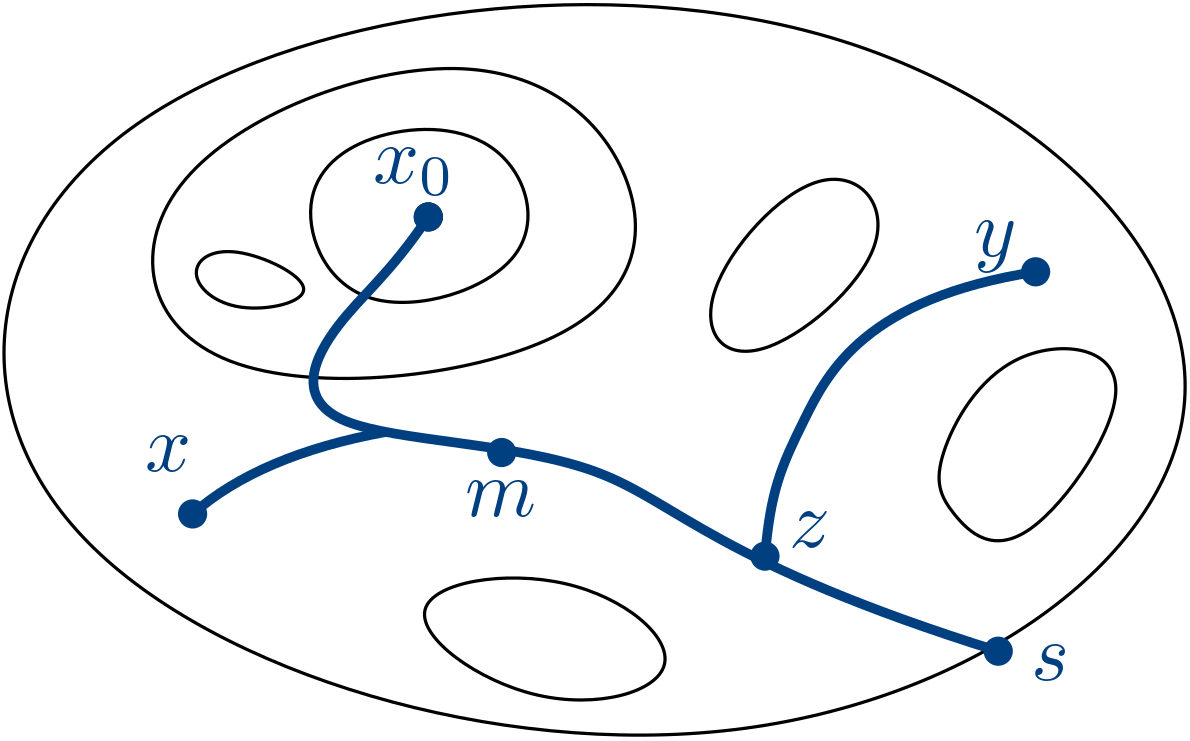}	  \label{fig:distance_in_strong_bubble}
                \vspace{0.1in}
    		\end{figure}
            
            Let $s$ be the pinch point of the common strong bubble of $x:=\hyperref[def:V and E]{V}(a)$ and $y:=\hyperref[def:V and E]{V}(b)$. Suppose the path from $x$ to $s$, from $y$ to $s$ meets at $z=\hyperref[def:V and E]{V}(c)$, where $c \in [a,b]$. By symmetry, we can let $d_T(x,z) \geq (1/2) d_T(x,y) \geq 4 \delta n$. Let $m=\hyperref[def:V and E]{V}(d)$ be the vertex closest to the midpoint in path $x \to z$, where $d \in [a,c]$. Since $d_T(x,m)\geq 2\delta n-1 \geq \delta n$ when $n$ large enough, we know $|a-d| > \varepsilon n^2$ since $\mathfrak{B}_n$. Let $a_0 \in \lfloor \varepsilon n^2 \rfloor \mathbb{Z}$, such that $a<d-\varepsilon n^2 < a_0 \leq d$. Suppose $s$ is $k$-th strong pinch point of $x_0:=\hyperref[def:V and E]{V}(a_0)$. We have $|a_0-d|<\varepsilon n^2$, thus $ d_T(x_0,m) \leq \delta n$, and $k \leq \delta n$. In the meantime, $a_0 \in [a,d]$ and $m,x$ in the same strong bubble implies $m$ is in the path from $s_{k-1}(x_0)$ to $s_{k}(x_0)$. We then have $d_T(s_{k-1}(x_0),s_{k}(x_0)) \geq d_T(m,s)>\delta n$.
            
            Consider set $S_n:=\lfloor \varepsilon n^2 \rfloor \mathbb{Z} \cap [-Rn^2,Rn^2]$ and the event 
            $$\mathfrak{C}_n=: \{ \forall
             t \in S_n, \ \forall i \leq \delta n, d_T(s_i(\hyperref[def:V and E]{V}(t)),s_{i+1}(\hyperref[def:V and E]{V}(t))) \leq \delta n \}.$$
            We know $|S_n|\leq 1+R/\varepsilon$ for all $n$. Thus given $\delta$, $R$, $p>0$ and $\varepsilon=\varepsilon(R,p,\delta)$, by Lemma \ref{lem:chap5_distance_between_strong_pinch_points}, when $n$ large enough, $\P(\mathfrak{C}_n) \geq 1-p$.

            From the discussion above, we know when $\mathfrak{B}_n$ and $\mathfrak{C}_n$ hold, $\mathfrak{A}_n$ is impossible. Thus $\P(\mathfrak{A}_n)\leq (1-\P(\mathfrak{B}_n))+(1-\P(\mathfrak{C}_n))\leq 2p$. To say it alternatively, for any $\delta>0$ and $p>0$, when $n$ large enough,
            $$\P\left(\sup_{B \subset \hyperref[def:Tri]{\mathrm{Tri}}[-Rn^2,Rn^2]} \sup_{x,y \in B} \frac{1}{n} \ d_T(x,y) \geq \delta \right) \leq 2p.$$
    	\end{proof}
    \bibliographystyle{alpha}
    \bibliography{FK_model}

\end{document}